\numberwithin{equation}{section}
\newtheorem{Def}{Definition}[section]
\newtheorem{Theorem}[Def]{Theorem}
\newtheorem{Lemma}[Def]{Lemma}
\newtheorem{Remark}[Def]{Remark}
\newtheorem{Corollary}[Def]{Corollary}
\newcommand{\N}{\mathbb{N}}
\newcommand{\Z}{\mathbb{Z}}
\DeclareMathOperator{\E}{\mathbb{E}} 
\DeclareMathOperator{\supp}{\mathsf{supp}}
\DeclareMathOperator{\mylog}{\mathsf{log}}
\DeclareMathOperator{\myexp}{\mathsf{exp}}
\newcommand{\SigmaA}{
\begin{tikzpicture}[tdplot_main_coords,scale=0.9,line join=round,line cap=round]


    \coordinate (A000) at (0,0,0);
    \coordinate (A100) at (1,0,0);
    \coordinate (A200) at (2,0,0);

    \coordinate (A010) at (0,1,0);
    \coordinate (A110) at (1,1,0);
    \coordinate (A210) at (2,1,0);

    \coordinate (A020) at (0,2,0);
    \coordinate (A120) at (1,2,0);
    \coordinate (A220) at (2,2,0);

    \coordinate (A001) at (0,0,1);
    \coordinate (A101) at (1,0,1);
    \coordinate (A201) at (2,0,1);

    \coordinate (A011) at (0,1,1);
    \coordinate (A111) at (1,1,1);
    \coordinate (A211) at (2,1,1);

    \coordinate (A021) at (0,2,1);
    \coordinate (A121) at (1,2,1);
    \coordinate (A221) at (2,2,1);


    \foreach \A/\B in {
      A000/A100, A100/A200,
      A010/A110, A110/A210,
      A020/A120, A120/A220,
      A000/A010, A010/A020,
      A100/A110, A110/A120,
      A200/A210, A210/A220}
      \draw[very thin,gray!70] (\A)--(\B);

    \foreach \A/\B in {
      A001/A101, A101/A201,
      A011/A111, A111/A211,
      A021/A121, A121/A221,
      A001/A011, A011/A021,
      A101/A111, A111/A121,
      A201/A211, A211/A221}
      \draw[very thin,gray!70] (\A)--(\B);

    \foreach \A/\B in {
      A000/A001, A100/A101, A200/A201,
      A010/A011, A110/A111, A210/A211,
      A020/A021, A120/A121, A220/A221}
      \draw[very thin,gray!70] (\A)--(\B);


    \draw[red,line width=1.2pt] (A110) -- (A111);

  \end{tikzpicture}%
}
\newcommand{\NuA}{%
  \begin{tikzpicture}[tdplot_main_coords,scale=1.0,line join=round,line cap=round]

    \coordinate (O)  at (0,0,0);
    \coordinate (Z)  at (0,0,1);

    \coordinate (Xp) at (1,0,0);
    \coordinate (Xm) at (-1,0,0);
    \coordinate (Yp) at (0,1,0);
    \coordinate (Ym) at (0,-1,0);

    \coordinate (XpZ) at (1,0,1);
    \coordinate (XmZ) at (-1,0,1);
    \coordinate (YpZ) at (0,1,1);
    \coordinate (YmZ) at (0,-1,1);


    \filldraw[fill=teal!30,draw=black,thick]
      (O) -- (Xm) -- (XmZ) -- (Z) -- cycle;

    \filldraw[fill=teal!30,draw=black,thick]
      (O) -- (Ym) -- (YmZ) -- (Z) -- cycle;


    \filldraw[fill=teal!50,draw=black,thick]
      (O) -- (Xp) -- (XpZ) -- (Z) -- cycle;

    \filldraw[fill=teal!50,draw=black,thick]
      (O) -- (Yp) -- (YpZ) -- (Z) -- cycle;

    \draw[red, line width=1pt] (O) -- (Z);

  \end{tikzpicture}%
}
\newcommand{\GraphA}{%
  \begin{tikzpicture}[scale=1.3,every node/.style={circle,inner sep=1pt,draw}]
    \node (p1) at (0,1) {};
    \node (p2) at (-0.5,0.5) {};
    \node (p3) at (0.5,0.5) {};
    \node (p4) at (0,0) {};
    \draw (p1)--(p2)--(p4)--(p3)--(p1);
 
  \end{tikzpicture}%
}
\newcommand{\SigmaB}{
\begin{tikzpicture}[tdplot_main_coords,scale=0.9,line join=round,line cap=round]


    \coordinate (A000) at (0,0,0);
    \coordinate (A100) at (1,0,0);
    \coordinate (A200) at (2,0,0);

    \coordinate (A010) at (0,1,0);
    \coordinate (A110) at (1,1,0);
    \coordinate (A210) at (2,1,0);

    \coordinate (A020) at (0,2,0);
    \coordinate (A120) at (1,2,0);
    \coordinate (A220) at (2,2,0);

    \coordinate (A001) at (0,0,1);
    \coordinate (A101) at (1,0,1);
    \coordinate (A201) at (2,0,1);

    \coordinate (A011) at (0,1,1);
    \coordinate (A111) at (1,1,1);
    \coordinate (A211) at (2,1,1);

    \coordinate (A021) at (0,2,1);
    \coordinate (A121) at (1,2,1);
    \coordinate (A221) at (2,2,1);


    \foreach \A/\B in {
      A000/A100, A100/A200,
      A010/A110, A110/A210,
      A020/A120, A120/A220,
      A000/A010, A010/A020,
      A100/A110, A110/A120,
      A200/A210, A210/A220}
      \draw[very thin,gray!70] (\A)--(\B);

    \foreach \A/\B in {
      A001/A101, A101/A201,
      A011/A111, A111/A211,
      A021/A121, A121/A221,
      A001/A011, A011/A021,
      A101/A111, A111/A121,
      A201/A211, A211/A221}
      \draw[very thin,gray!70] (\A)--(\B);

    \foreach \A/\B in {
      A000/A001, A100/A101, A200/A201,
      A010/A011, A110/A111, A210/A211,
      A020/A021, A120/A121, A220/A221}
      \draw[very thin,gray!70] (\A)--(\B);


    \draw[red,line width=1.2pt] (A110) -- (A111);
    \draw[red,line width=1.2pt] (A120) -- (A121);

  \end{tikzpicture}%
}
\newcommand{\NuB}{%
  \begin{tikzpicture}[tdplot_main_coords,scale=0.8,line join=round,line cap=round]


    \filldraw[fill=teal!30,draw=black,thick]
      (0,0,0) -- (-1,0,0) -- (-1,0,1) -- (0,0,1) -- cycle;

    \filldraw[fill=teal!30,draw=black,thick]
      (0,0,0) -- (0,-1,0) -- (0,-1,1) -- (0,0,1) -- cycle;

    \filldraw[fill=teal!50,draw=black,thick]
      (0,0,0) -- (1,0,0) -- (1,0,1) -- (0,0,1) -- cycle;

    \filldraw[fill=teal!30,draw=black,thick]
      (0,1,0) -- (-1,1,0) -- (-1,1,1) -- (0,1,1) -- cycle;
    \filldraw[fill=teal!50,draw=black,thick]
      (0,1,0) -- (0,2,0) -- (0,2,1) -- (0,1,1) -- cycle;

     \filldraw[fill=teal!50,draw=black,thick]
      (0,1,0) -- (1,1,0) -- (1,1,1) -- (0,1,1) -- cycle;

    \draw[red, line width=1pt] (0,0,0) -- (0,0,1);
\draw[red, line width=1pt] (0,1,0) -- (0,1,1);
  \end{tikzpicture}%
}
\newcommand{\GraphB}{%
  \begin{tikzpicture}[scale=1.1,every node/.style={circle,inner sep=1pt,draw}]
    \node (p1) at (0,1) {};
    \node (p2) at (-0.5,0.5) {};
    \node (p3) at (1,1) {};
    \node (p4) at (0,0) {};
    \node (p5) at (1.5,0.5) {};
    \node (p6) at (1,0) {};
    \draw (p1)--(p2)--(p4)--(p6)--(p5) -- (p3) -- (p1);
 
  \end{tikzpicture}%
}
\newcommand{\SigmaC}{\begin{tikzpicture}[tdplot_main_coords,scale=0.9,line join=round,line cap=round]


    \coordinate (A000) at (0,0,0);
    \coordinate (A100) at (1,0,0);
    \coordinate (A200) at (2,0,0);

    \coordinate (A010) at (0,1,0);
    \coordinate (A110) at (1,1,0);
    \coordinate (A210) at (2,1,0);

    \coordinate (A020) at (0,2,0);
    \coordinate (A120) at (1,2,0);
    \coordinate (A220) at (2,2,0);

    \coordinate (A001) at (0,0,1);
    \coordinate (A101) at (1,0,1);
    \coordinate (A201) at (2,0,1);

    \coordinate (A011) at (0,1,1);
    \coordinate (A111) at (1,1,1);
    \coordinate (A211) at (2,1,1);

    \coordinate (A021) at (0,2,1);
    \coordinate (A121) at (1,2,1);
    \coordinate (A221) at (2,2,1);


    \foreach \A/\B in {
      A000/A100, A100/A200,
      A010/A110, A110/A210,
      A020/A120, A120/A220,
      A000/A010, A010/A020,
      A100/A110, A110/A120,
      A200/A210, A210/A220}
      \draw[very thin,gray!70] (\A)--(\B);

    \foreach \A/\B in {
      A001/A101, A101/A201,
      A011/A111, A111/A211,
      A021/A121, A121/A221,
      A001/A011, A011/A021,
      A101/A111, A111/A121,
      A201/A211, A211/A221}
      \draw[very thin,gray!70] (\A)--(\B);

    \foreach \A/\B in {
      A000/A001, A100/A101, A200/A201,
      A010/A011, A110/A111, A210/A211,
      A020/A021, A120/A121, A220/A221}
      \draw[very thin,gray!70] (\A)--(\B);


    \draw[red,line width=1.2pt] (A110) -- (A111);
    \draw[red,line width=1.2pt] (A111) -- (A121);
  \end{tikzpicture}%
}
\newcommand{\NuC}{%
  \begin{tikzpicture}[tdplot_main_coords,scale=0.8,line join=round,line cap=round]


    \filldraw[fill=teal!30,draw=black,thick]
      (0,0,0) -- (-1,0,0) -- (-1,0,1) -- (0,0,1) -- cycle;

    \filldraw[fill=teal!30,draw=black,thick]
      (0,0,0) -- (0,-1,0) -- (0,-1,1) -- (0,0,1) -- cycle;

    \filldraw[fill=teal!50,draw=black,thick]
      (0,0,0) -- (1,0,0) -- (1,0,1) -- (0,0,1) -- cycle;

    \filldraw[fill=teal!30,draw=black,thick]
      (0,0,1) -- (1,0,1) -- (1,1,1) -- (0,1,1) -- cycle;
   \filldraw[fill=teal!40,draw=black,thick]
   (0,0,1) -- (-1,0,1) -- (-1,1,1) -- (0,1,1) -- cycle;

     \filldraw[fill=teal!50,draw=black,thick]
      (0,0,1) -- (0,0,2) -- (0,1,2) -- (0,1,1) -- cycle;

    \draw[red, line width=1pt] (0,0,0) -- (0,0,1);
\draw[red, line width=1pt] (0,0,1) -- (0,1,1);
  \end{tikzpicture}%
}
\newcommand{\GraphC}{%
  \begin{tikzpicture}[scale=1.1,every node/.style={circle,inner sep=1pt,draw}]
    \node (p1) at (0,1) {};
    \node (p2) at (-0.5,0.5) {};
    \node (p3) at (1,1) {};
    \node (p4) at (0,0) {};
    \node (p5) at (1.5,0.5) {};
    \node (p6) at (1,0) {};
    \draw (p1)--(p2)--(p4)--(p6)--(p5) -- (p3) -- (p1);
 
  \end{tikzpicture}%
}
\newcommand{\SigmaD}{\begin{tikzpicture}[tdplot_main_coords,scale=0.9,line join=round,line cap=round]


    \coordinate (A000) at (0,0,0);
    \coordinate (A100) at (1,0,0);
    \coordinate (A200) at (2,0,0);

    \coordinate (A010) at (0,1,0);
    \coordinate (A110) at (1,1,0);
    \coordinate (A210) at (2,1,0);

    \coordinate (A020) at (0,2,0);
    \coordinate (A120) at (1,2,0);
    \coordinate (A220) at (2,2,0);

    \coordinate (A001) at (0,0,1);
    \coordinate (A101) at (1,0,1);
    \coordinate (A201) at (2,0,1);

    \coordinate (A011) at (0,1,1);
    \coordinate (A111) at (1,1,1);
    \coordinate (A211) at (2,1,1);

    \coordinate (A021) at (0,2,1);
    \coordinate (A121) at (1,2,1);
    \coordinate (A221) at (2,2,1);


    \foreach \A/\B in {
      A000/A100, A100/A200,
      A010/A110, A110/A210,
      A020/A120, A120/A220,
      A000/A010, A010/A020,
      A100/A110, A110/A120,
      A200/A210, A210/A220}
      \draw[very thin,gray!70] (\A)--(\B);

    \foreach \A/\B in {
      A001/A101, A101/A201,
      A011/A111, A111/A211,
      A021/A121, A121/A221,
      A001/A011, A011/A021,
      A101/A111, A111/A121,
      A201/A211, A211/A221}
      \draw[very thin,gray!70] (\A)--(\B);

    \foreach \A/\B in {
      A000/A001, A100/A101, A200/A201,
      A010/A011, A110/A111, A210/A211,
      A020/A021, A120/A121, A220/A221}
      \draw[very thin,gray!70] (\A)--(\B);


    \draw[red,line width=1.2pt] (A110) -- (A111);
    \draw[red,line width=1.2pt] (A200) -- (A201);
  \end{tikzpicture}%
}
\newcommand{\NuD}{%
  \begin{tikzpicture}[tdplot_main_coords,scale=0.8,line join=round,line cap=round]

\filldraw[fill=teal!30,draw=black,thick]
      (-1,0,0) -- (-1,1,0) -- (-1,1,1) -- (-1,0,1) -- cycle;
    \filldraw[fill=teal!30,draw=black,thick]
      (0,0,0) -- (-1,0,0) -- (-1,0,1) -- (0,0,1) -- cycle;

    \filldraw[fill=teal!30,draw=black,thick]
     (0,0,0) -- (0,-1,0) -- (0,-1,1) -- (0,0,1) -- cycle;

    \filldraw[fill=teal!50,draw=black,thick]
      (0,0,0) -- (1,0,0) -- (1,0,1) -- (0,0,1) -- cycle;

\filldraw[fill=teal!50,draw=black,thick]
      (0,0,0) -- (0,1,0) -- (0,1,1) -- (0,0,1) -- cycle;
    \filldraw[fill=teal!30,draw=black,thick]
      (-1,1,0) -- (-2,1,0) -- (-2,1,1) -- (-1,1,1) -- cycle;
    \filldraw[fill=teal!50,draw=black,thick]
      (-1,1,0) -- (-1,2,0) -- (-1,2,1) -- (-1,1,1) -- cycle;

     \filldraw[fill=teal!50,draw=black,thick]
      (-1,1,0) -- (0,1,0) -- (0,1,1) -- (-1,1,1) -- cycle;
     
    \draw[red, line width=1pt] (0,0,0) -- (0,0,1);
\draw[red, line width=1pt] (-1,1,0) -- (-1,1,1);
  \end{tikzpicture}%
}
\newcommand{\GraphD}{%
  \begin{tikzpicture}[scale=1.2,every node/.style={circle,inner sep=1pt,draw}]
\node (p1) at (-0.1464,0.8536) {};
\node (p2) at (-0.8536,0.8536) {};
\node (p3) at (-0.1464,0.1464) {};
\node (p4) at (-0.8536,0.1464) {};

\node (p5) at (0.6464,0.8536) {};
\node (p6) at (0.6464,0.1464) {};
\node (p7) at (1.3536,0.8536) {};
\node (p8) at (1.3536,0.1464) {};
    \draw (p1)--(p2)--(p4)--(p3)--(p1);
    \draw (p5)--(p6)--(p8)--(p7)--(p5);
    \draw (p1) -- (p5) -- (p3) -- (p6) -- (p1);
  \end{tikzpicture}%
}
\title{Ursell functions in lattice gauge theory}
\author{Adrien Malacan \\
\small Mathematical Sciences, Chalmers University of Technology\\
\small \texttt{malacan@chalmers.se}}
\date{}
\begin{document} 
\tdplotsetmaincoords{60}{120}

\maketitle
\begin{abstract}
Ursell functions $U_n$ are higher-order generalizations of the covariance function, which capture the interactions between $n$ random variables. In the classical Ising model, as shown by Shlosman \cite{Shlosman1986SignsOT}, when considering the spins at some locations, the sign of $U_{2n}$ alternates with $n$ and is independent of the locations of the spins considered. In this paper, we study the Ursell function in Ising lattice gauge theory. When the spins at the edges are used as random variables, we show that $U_n$ can be positive, negative, or zero depending on the configuration and the parameter $\beta$. When considering Wilson loops observables as random variables, using the tool of cluster expansion as adapted in \cite{firstpaper}, we prove that at sufficiently low temperature, for any number $n$ of disjoint Wilson loops, there exists a configuration of loops such that the Ursell function $U_n$ is positive. These results contrast sharply with the behavior observed for the Ising model.
\end{abstract} 
\section{Introduction}
\label{sec:introduction}
Lattice gauge theory is a model from statistical mechanics, introduced independently in the 1970s by Wegner \cite{Wegner:1971app} and Wilson \cite{Wilson74}. Wilson's goal was to address the mathematical well-posedness of Yang–Mills theory, which remains an important open problem in probability theory \cite{ymprobabilists}. To achieve this, Wilson proposed a discretization of Euclidean Yang–Mills theory, now known as lattice gauge theory. To define the model, we consider a finite box $B_N:= [-N,N]^d\subset \Z^m$, with $m \geq 2$. We also fix a finite abelian group $G$, which we call the $\textit{gauge group}$. Then, lattice gauge theory is defined as a Gibbs probability measure on the set of 1-forms $\sigma$, which are functions assigning elements from $G$ to the directed edges of $B_N$, denoted by $E(B_N)$, which satisfy,
\begin{equation}
\label{eq:firstproperty}
\sigma(-e)=-\sigma(e) \in G,~ \text{for all} ~ e \in E(B_N).
\end{equation}
A 1-form is called a \textit{gauge field configuration}, and the set of 1-forms in $B_N$ with group $G$ is denoted by $\Omega_1(B_N,G)$. We let $E$ denote the set of directed edges in $\Z^m$, while $E(B_N)$ denotes the directed (or oriented) edges, whose two endpoints are in the box $B_N$.
We call a square made of four oriented edges $e_1,e_2,e_3,e_4$ as in Figure \ref{fig:singleplaquette} an \textit{oriented plaquette}. The set of oriented plaquettes that are contained in $B_N$, i.e., the four oriented edges of the boundary are in $E(B_N)$, is denoted by $P(B_N)$. \newline
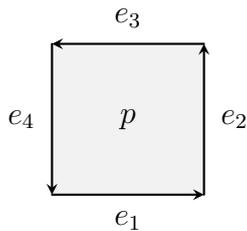
\begin{figure}
\centering
\begin{tikzpicture}[scale=2,>=stealth,line cap=round,line join=round]
  \coordinate (A) at (0,0);
  \coordinate (B) at (1,0);
  \coordinate (C) at (1,1);
  \coordinate (D) at (0,1);

  \fill[gray!10] (A) -- (B) -- (C) -- (D) -- cycle;

  \draw[->,thick] (A) -- node[below,yshift=-2pt] {$e_1$} (B); 
  \draw[->,thick] (B) -- node[right,xshift=2pt] {$e_2$} (C);  
  \draw[->,thick] (C) -- node[above,yshift=2pt] {$e_3$} (D);  
  \draw[->,thick] (D) -- node[left,xshift=-2pt] {$e_4$} (A);  

  \node at (0.5,0.5) {$p$};
\end{tikzpicture}
\caption{A plaquette $p \in P(B_N)$ with it's four oriented boundary edges ${e_1,e_2,e_3,e_4 \in E(B_N)}$.}
\label{fig:singleplaquette}
\end{figure} 
For a 1-form $\sigma \in \Omega_1(B_N,G)$, we define its discrete derivative $d\sigma$ by,
\begin{equation}
\label{def:dderivative}
    d\sigma (p) := \sum_{e \in \partial p} \sigma(e) \in G.
\end{equation}
Here $p \in P(B_N)$, and $\partial p$ denotes the four oriented edges in the boundary of $p$ (see Figure \ref{fig:singleplaquette}). The notion of $\partial p$ will be formalized in Section \ref{subsec:DEC} within the framework of discrete exterior calculus. \newline
We now define the Wilson action functional,
\begin{equation}
\label{eq:Hamiltonian}
S_N(\sigma) := - \sum_{p \in P(B_N)} \Re \rho(d\sigma(p)), ~~ \sigma \in \Omega_1(B_N,G).
\end{equation}
The functional assigns low energy to configurations whose discrete derivative is close to the neutral element, that is, when $\Re \rho(d\sigma(p))$ is close to $1$, where $\rho$ is a unitary representation of $G$.
Finally, the lattice gauge theory's probability measure on gauge field configurations is defined as the following Gibbs measure,
\begin{equation}
\label{eq:Gibbsmeasure}
\mu_{\beta,N}(\sigma) := \frac{1}{Z_{\beta,N}}\myexp(-\beta S_N(\sigma)),
\end{equation}
where $\beta \geq 0$ is the inverse temperature parameter and $Z_{\beta,N}$ the partition function. 
We denote the expectation with respect to this measure by $\E_{N,\beta}$. \newline 
Now, consider a nearest neighbor closed loop in $E(B_N)$, which we denote by $\gamma$. To this loop we associate the Wilson loop's observable $W_\gamma$, defined as the representation in $\rho$ of the sum of the group elements along the edges $e$ traversed by $\gamma$, 
\begin{equation}
\label{eq:defwilsonloopobservable}
W_\gamma(\sigma) := \rho(\sigma(\gamma)) :=\prod_{e \in \gamma} \rho(\sigma(e)) = \rho\big(\sum_{e \in \gamma} \sigma(e) \big).
\end{equation}
We often omit the dependency to $\sigma$ and write $W_\gamma$ for $W_\gamma(\sigma)$.
The infinite volume expectation is denoted by
\begin{equation}
\label{eq:Ginibre}
\E_\beta[W_\gamma ]:= \lim_{N \to \infty} \E_{N,\beta}[W_\gamma],
\end{equation}
which is well-defined by the Ginibre inequalities \cite{Forsstr_m_2023}. When $\beta$ is clear from the context, we write $\E[W_\gamma]$. \newline 
Lattice gauge theory has been extensively studied from a physics perspective, see, e.g., \cite{FrohlichSpencer1982,Guth1980,kogut1979introduction}. In recent years, particularly since the 2020 publication of Chatterjee’s paper \cite{chatterjee2020wilsonloopsisinglattice}, the model has also attracted significant attention from the probability theory community, see, e.g., \cite{adhikari2024correlationdecayfinitelattice,cao2025expandedregimesarealaw, Forsstr_m_2023, garban2021improvedspinwaveestimatewilson}.
An interesting direction is to compare this model with other classical models in statistical mechanics, such as the well-known Ising model. 
In this context, an important tool for understanding the model is the Ursell function (also known as the connected correlation function), which generalizes the notion of covariance to $n \geq 2$ random variables. 
Shlosman \cite{Shlosman1986SignsOT} addressed the question of the sign of Ursell functions in the Ising model, treating spins at different points as random variables. Notably, he proved that the sign of the Ursell function only depends on $n$, and not on the positions of the $n$ points. 
In this paper, we address the corresponding question for Ising lattice gauge theory, meaning $G=\Z_2$. In Theorem~\ref{thm:2} we treat the spins on the edges as random variables, while in Theorem \ref{thm:1} we treat the Wilson loops observables as random variables. Theorem~\ref{thm:2} shows that $U_n$ can take positive, negative, and zero values, depending on the underlying configuration of edges considered, and the value of $\beta$. Theorem \ref{thm:1} states that for every $n$, for all sufficiently large $\beta$, there exist $n$ disjoint Wilson loops $\gamma_1,...,\gamma_n$, such that the corresponding Ursell function is positive. This demonstrates a behavior that contrasts sharply with the result in \cite{Shlosman1986SignsOT}. \newline 
Theorem \ref{thm:1} is the main contribution of this paper, and its proof uses notably the powerful cluster expansion at low temperature, as used for lattice gauge theory by Forsström and Viklund \cite{firstpaper}.
\subsection{Ursell functions}
\label{subsec:Ursellfct}
Ursell functions are tools from statistical mechanics that generalize the notion of covariance to $n$ random variables. 
Formally, they are defined as sums over the set of partitions of $[n] =\{1,...,n\}$, which we denote by $\mathfrak{F}_n$.
 For $\mathfrak{P} \in \mathfrak{F}_n$, we denote by $|\mathfrak{P}|$ the number of elements of the partition. Hence for $\mathfrak{P} \in \mathfrak{F}_n$, one has $\mathfrak{P} = \{P_1,...,P_{|\mathfrak{P}|}\}$, where $\bigcup_{i =1}^{|\mathfrak{P}|} P_i = [n]$ and $(P_i)_{i \in \{1,...,|\mathfrak{P}|\}}$ are pairwise disjoint. Now for $X_1,...,X_n$ random variables on the same space, the Ursell function $U_n(X_1,...,X_n)$ is defined by,
\begin{equation}
\label{eq:Wilson}
    U_n(X_1,...,X_n) := \sum_{\mathfrak{P} \in \mathfrak{F}_n} (-1)^{|\mathfrak{P}|-1} (|\mathfrak{P}|-1)! \prod_{P \in \mathfrak{P}} \E\big[\prod_{i \in P} X_i\big].
\end{equation}
The first Ursell functions look as follows,
\begin{align*}
U_1(X_1)=&\E[X_1], \\
U_2(X_1,X_2)= &\E[X_1X_2]-\E[X_1]\E[X_2] = \mathsf{Cov}(X_1,X_2), \\
U_3(X_1,X_2,X_3)=&\E[X_1X_2X_3]-\E[X_1X_2]\E[X_3]-\E[X_1X_3]\E[X_2] \\  -&\E[X_2X_3]\E[X_1]+2\E[X_1]\E[X_2]\E[X_3].
\end{align*} \newline 
Considering the Ising model, in \cite{Shlosman1986SignsOT} Shlosman proved that in any dimension $m$, when the random variables are the spins $\sigma_{x_i}$ at lattice sites $x_i \in \Z^m$, the sign of the Ursell function only depends on the number $n$ of sites considered and 
not on their specific location. In other words, he proved that,
\[
\mathsf{sign}(U_{2n}(\sigma_{x_1},...,\sigma_{x_{2n}}))=(-1)^{n+1},
\]
while $\mathsf{sign}(U_{2n+1})=0$ due to the $+1/-1$ symmetry of the model. \newline 
For lattice gauge theory, if considering Wilson loop observables as random variables, then the first and second Ursell functions are positive for any values of the parameters $\beta > 0$ and $m \geq 2$. This can be seen as a consequence of Griffith's first and second inequalities. Hence, for any Wilson loops $\gamma_1,\gamma_2$, one has that
\[
U_1(W_{\gamma_1})=\E[W_{\gamma_1}] >0, ~~ U_2(W_{\gamma_1},W_{\gamma_2})=\mathsf{Cov}(W_{\gamma_1},W_{\gamma_2}) > 0.
\]
On the other hand, if considering the spins on edges for $e_1,e_2 \in E$, as a consequence of Elitzur's Theorem \cite{Elitzur}, one has,
\[
U_1(\sigma(e_1))=0, ~~ U_2(\sigma(e_1),\sigma(e_2))=  0.
\]
These results show a behavior that differs from the Ising model and will be further investigated for general values of $n$.
\subsection{Main Results}
In this section, we will present our main results, first for the Ursell function applied to the spins at the edges and then to Wilson observables.
\newline
The random variables observed by Shlosman in the study of the Ursell function for the Ising model \cite{Shlosman1986SignsOT} are the spins at some points. As the value at one point (a 0-form) is not defined in  lattice gauge theory, one natural observable would be to consider 1-forms, and the simplest are the spins at the edges. 
\begin{Theorem}[Ursell function on edges]  
    \label{thm:2}
    Let $\sigma \sim \mu_{N,\beta}$ be a gauge field configuration
    on $\Z^m$, $m \geq 2$, at inverse temperature $\beta > 0$. Then, the following holds. \newline 
    (a) For any $n \in \N$, there exists $e_1,...,e_n \in E$ such that
    \begin{equation*}
        U_{n}(\sigma(e_1),...,\sigma(e_n)) = 0.
    \end{equation*}
    (b) If $n$ is odd or $n=2$, then for any $e_1,...,e_n \in E$ it holds that
    \begin{equation*}
        U_{n}(\sigma(e_1),...,\sigma(e_n)) = 0.
    \end{equation*}
    (c) If $n \ge 4$ is even, then there exist $e_1,...,e_n \in E$ such that
    \begin{align*}
        U_n&(\sigma(e_1),...,\sigma(e_n)) > 0.
    \end{align*}
    \newline 
    (d) Let $m \geq 3$. If $n \geq 10$ is even, and $\beta$ is sufficiently large, then there exist $e_1,...,e_n \in E$ such that
    \begin{align*}
        U_n&(\sigma(e_1),...,\sigma(e_n)) < 0.
    \end{align*}
    Conversely, if $U_n(\sigma(e_1),...,\sigma(e_n)) < 0$, then it must hold that $n \geq 10$. \newline 
    (e) Let $m=2$. If  $n \geq 16$ is even, and $\beta$ is sufficiently large, then there exist $e_1,...,e_n \in E$ such that
    \begin{align*}
        U_n&(\sigma(e_1),...,\sigma(e_n)) < 0.
    \end{align*}
    Conversely, if $U_n(\sigma(e_1),...,\sigma(e_n)) < 0$, then it must hold that $n \geq 16$.
\end{Theorem}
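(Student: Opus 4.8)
The plan is to reduce $U_n(\sigma(e_1),\dots,\sigma(e_n))$ — for \emph{distinct} edges $e_1,\dots,e_n$ and spins $\sigma(e_i)\in\{\pm1\}$ — to a finite, explicitly signed combinatorial sum, and then to choose the $e_i$ suitably. Two symmetries of $\mu_{N,\beta}$ do the work for $G=\Z_2$. Gauge invariance, $\sigma\mapsto\sigma+d\tau$, applied with $\tau=\mathbbm{1}_v$, yields the Elitzur-type cancellation $\E_{N,\beta}\big[\prod_{i\in P}\sigma(e_i)\big]=0$ whenever the subgraph $S_P:=\{e_i:i\in P\}$ has a vertex of odd degree, i.e.\ whenever $S_P$ is not an even subgraph (``a cycle''); since $\Z^m$ is bipartite, this in particular kills all moments with $|P|$ odd. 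When $S_P$ is a nonempty even subgraph, write $W_{S_P}:=\prod_{e\in S_P}\sigma(e)$; decomposing $S_P$ into loops and applying Griffiths' first and second inequalities gives $\E_{N,\beta}[W_{S_P}]>0$ for every $\beta>0$. Substituting into \eqref{eq:Wilson}, every term with a non-cyclic block drops, and
\begin{equation*}
U_n=\sum_{\substack{\mathfrak{P}\in\mathfrak{F}_n\\ S_P\text{ even for all }P\in\mathfrak{P}}}(-1)^{|\mathfrak{P}|-1}(|\mathfrak{P}|-1)!\prod_{P\in\mathfrak{P}}\E_{N,\beta}[W_{S_P}],
\end{equation*}
a sum over set partitions of $[n]$ all of whose blocks are cycles, with every factor strictly positive. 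Everything below is an analysis of this sum, controlled by the cycle space $\mathcal C:=\{P\subseteq[n]:S_P\text{ is even}\}$, a subspace of the $\Z_2$-vector space $2^{[n]}$ (the first homology of the graph $\{e_1,\dots,e_n\}$).

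Parts (a)–(c) follow immediately. For (a) take $e_1,\dots,e_n$ spanning a forest (say a path of length $n$): $\mathcal C=\{\varnothing\}$, no partition has all blocks cyclic, the sum is empty, $U_n=0$. For (b) with $n$ odd, every partition of $[n]$ has a block of odd size, hence a non-cyclic block, so $U_n=0$; for $n=2$, two distinct edges of $\Z^m$ cannot form a $2$-cycle, so $U_2=0$. For (c) take $e_1,\dots,e_n$ to be the edges of a simple closed loop $\gamma$ of length $n$ (which exists for every even $n\ge4$): $\mathcal C=\{\varnothing,[n]\}$, the only admissible partition is $\{[n]\}$, and $U_n=\E_{N,\beta}[W_\gamma]>0$.

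For the existence statements in (d) and (e) the idea is to make $\dim\mathcal C=3$ with \emph{no} basis of pairwise edge-disjoint cycles. Take $\{e_1,\dots,e_n\}$ to be a generalized theta graph: two vertices $u,v$ joined by four internally vertex-disjoint paths $P_1,\dots,P_4$ of lengths $\ell_1,\dots,\ell_4\ge1$, $\sum_i\ell_i=n$, with at most one $\ell_i$ equal to $1$. Then $\dim\mathcal C=3$, and $\bigcup_{i\in T}P_i$ is an even subgraph exactly when $|T|$ is even, so the nonempty cyclic blocks are precisely the six sets $P_i\cup P_j$ and $[n]$ itself; hence the only admissible partitions are $\{[n]\}$ and the three pairings $\{P_i\cup P_j,\,P_k\cup P_l\}$. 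As $\beta\to\infty$ the measure concentrates on the configurations with $d\sigma\equiv0$, on which $W_S\equiv1$ for every even $S$, so the sum above tends to $1-3=-2$; being a fixed polynomial in finitely many correlations, $U_n<0$ for all sufficiently large $\beta$ (and $N$). It remains to realize the theta graph economically: in $\Z^m$ with $m\ge3$ one may take $u\sim v$ and $(\ell_1,\dots,\ell_4)=(1,3,3,3)$, so $n=10$, and lengthening one path by $2$ gives every even $n\ge10$; in $\Z^2$ this is impossible, since planarity forces the fourth path to surround the other three, and $(1,3,3,9)$ is the cheapest choice, giving $n=16$ and, by the same lengthening, every even $n\ge16$.

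The converse (optimality of the thresholds $10$ and $16$) is the delicate part. One first notes that $U_n\ne0$ forces $\{e_1,\dots,e_n\}$ to be an even subgraph (a bridge lies in no even subgraph, and an admissible partition exhibits $[n]$ as a $\Z_2$-sum of elements of the subspace $\mathcal C$, so $[n]\in\mathcal C$). If this even subgraph is a cactus — every block a single cycle $\gamma_1,\dots,\gamma_k$ — then $U_n=U_k(W_{\gamma_1},\dots,W_{\gamma_k})$, the Ursell function of the corresponding Wilson loops; for $k\le2$ this is $\ge0$ by Griffiths, and when $m\ge3$ only $k\le2$ can occur on $\le8$ edges, which already gives the ``$n\ge10$'' bound, while for $m=2$ and $n\le14$ one also meets $k=3$ and needs $U_3(W_{\gamma_1},W_{\gamma_2},W_{\gamma_3})\ge0$ for all $\beta>0$ (to be obtained from Griffiths/Ginibre-type correlation inequalities, or for $m=2$ via Kramers–Wannier duality to the planar Ising model). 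If instead the even subgraph is not a cactus, some block is a $2$-connected even graph that is not a cycle; the smallest such is exactly the theta graph of the construction above, which needs at least $10$ edges in $\Z^m$ ($m\ge3$) and at least $16$ edges in $\Z^2$. The two steps that require genuine work are this planar edge-count bound and the nonnegativity of $U_k$ for Wilson loops of a cactus; I expect the $m=2$ optimality bound $n\ge16$, which combines both, to be the main obstacle.
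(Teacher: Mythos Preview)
Your framework is sound and, for parts (a)–(c) and the existence halves of (d)–(e), essentially coincides with the paper's argument. Your generalized theta graph with four paths of lengths $(1,3,3,3)$ is exactly the paper's configuration of three plaquettes sharing a common edge (their Figure~3), and your limit $1-3=-2$ as $\beta\to\infty$ matches theirs; for $m=2$ the paper uses the $(2,2,6,6)$ theta rather than your $(1,3,3,9)$, but both give $16$ edges. Your cactus reduction $U_n=U_k(W_{\gamma_1},\dots,W_{\gamma_k})$ is cleaner than the paper's direct case check and in fact patches a case the paper glosses over (it only treats decompositions into \emph{two} subloops in the $(e)$ converse).

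Two comments on the gaps you flag. First, your worry about $U_3(W_{\gamma_1},W_{\gamma_2},W_{\gamma_3})\ge0$ for cacti in $\Z^2$ with $n\le14$ dissolves immediately and does not need duality or new correlation inequalities: the three edge-disjoint cycles then have sizes in $\{4,4,4\}$ or $\{4,4,6\}$, and in two dimensions (after gauge fixing) the plaquette variables $d\sigma(p)$ are i.i.d., so $W_{\gamma_i}=\prod_{p\in q_i}d\sigma(p)$ with interiors $q_i$ that are forced to be pairwise disjoint (cycles this short cannot nest while remaining edge-disjoint), whence the $W_{\gamma_i}$ are independent and $U_3=0$.

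Second, your real obstacle---that the smallest $2$-connected even non-cycle subgraph of $\Z^2$ has at least $16$ edges---is exactly what the paper attacks via its Lemma~3.2 and the case analysis of Appendix~A. Two warnings here: you should not assume this minimal block is a theta on four paths, since a priori it could have three or more vertices of degree $4$ (in $\Z^2$ the degree is capped at $4$), and you need a separate argument that such configurations are no smaller; and your one-line justification ``planarity forces the fourth path to surround the other three'' is not a proof---even for the theta-$4$ case one has to rule out all splittings $(\ell_1,\ell_2,\ell_3,\ell_4)$ with $\sum\ell_i<16$ by hand, which is the bulk of the paper's appendix.
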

We will see that the proof of Theorem \ref{thm:2} relies on elementary properties of Ising lattice gauge theory, and that most terms in the Ursell function~\eqref{eq:Wilson} typically vanish. The next theorem concerns the case where random variables in the Ursell function are Wilson loop observables, which are natural observables to study since they constitute the main variables of interest in lattice gauge theory. Unlike in the Ising model, there is no symmetry forcing the terms in \eqref{eq:Wilson} to have a value of zero when $n$ is odd. For this reason, it is more delicate to study the sign of $U_n$ in our setting, which is the main reason for employing cluster expansions techniques in Section \ref{sec:Ursellfct} to prove the following Theorem.
\begin{Theorem}[Ursell function on Wilson observables] 
\label{thm:1}
    Let $n \in \N$. Let $\sigma \sim \mu_{N,\beta}$ be a gauge field configuration
    on $\Z^m$, $m \geq 3$, at inverse temperature $\beta > 0$. \newline 
    There exists an inverse temperature $\beta_{n,m}^*$ such that for all $\beta \geq \beta_{n,m}^*$, there are $n$ disjoint Wilson loops  $\gamma_1,...,\gamma_n$ such that
    \begin{equation}
    \label{eq:mainresult}
    U_n(W_{\gamma_1}(\sigma),...,W_{\gamma_n}(\sigma)) > 0.
    \end{equation}
\end{Theorem}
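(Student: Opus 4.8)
The plan is to exploit the cluster expansion for Ising lattice gauge theory at low temperature, as developed in \cite{firstpaper}, to compute the joint moments $\E[\prod_{i \in P} W_{\gamma_i}]$ for any subset $P \subseteq [n]$ with sufficient precision, and then to show that by choosing the loops $\gamma_1,\dots,\gamma_n$ to be \emph{identical copies} (up to translation) of a single small loop $\gamma$ placed far apart from one another, the leading-order behavior of $U_n$ is governed by a single dominant term that is positive. Concretely, in the regime $\beta$ large, $\E[W_\gamma]$ is close to $1$, and more precisely $\E[W_\gamma] = 1 - c\,e^{-c'\beta} + o(e^{-c'\beta})$ for constants depending on $\gamma$; similarly $\E[\prod_{i\in P} W_{\gamma_i}]$ admits an expansion whose correction terms are indexed by clusters of plaquettes (or, in the $\Z_2$ setting, by surfaces/vortices) that interact with the loops. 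The first step is to set up this notation carefully and record the key estimate: if the loops are pairwise far apart, then up to exponentially small error the expansion factorizes across loops \emph{except} for contributions from clusters large enough to ``see'' several loops at once, and each such multi-loop cluster costs a factor exponential in its size.

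The second step is the combinatorial heart: I would substitute these expansions into the definition \eqref{eq:Wilson} of $U_n$ and perform the standard Möbius-type cancellation. Writing each moment as $\E[\prod_{i\in P} W_{\gamma_i}] = \exp\big(\sum_{C} \Phi(C)\big)$ where the sum runs over clusters $C$ and $\Phi(C)=0$ unless $C$ is ``anchored'' to loops indexed within $P$, one gets that $U_n$ equals a sum over clusters $C$ that are anchored to \emph{all} of $[n]$ (the connected clusters touching every loop), plus higher-order products of such. This is the lattice-gauge analogue of the classical fact that the $n$-th Ursell function of a Gibbs field is a sum over ``$n$-connected'' objects. The dominant such cluster, for well-chosen loops, should be the minimal connected configuration of plaquettes spanning all $n$ loops; I would choose the $\gamma_i$ and their relative positions so that this minimal spanning object is unique and contributes with a definite positive sign, while all competing clusters are strictly larger and hence exponentially smaller in $\beta$.

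The third step is sign control. In Ising lattice gauge theory the cluster weights $\Phi(C)$ are not automatically of one sign, so I cannot simply invoke positivity of every term; instead I need the \emph{leading} coefficient to be positive. Here I expect to use that $W_\gamma \in \{-1,+1\}$ and that at low temperature the relevant minimal cluster corresponds to flipping a single plaquette (or a single minimal surface) whose boundary effect on the product $\prod W_{\gamma_i}$ is computable by hand: arranging the $n$ loops so that each bounds a face adjacent to one common plaquette-tube, a single excitation toggles all $n$ Wilson loops simultaneously, producing a term of order $e^{-c\beta n}$ with sign $(+1)$ in $U_n$ after the factorial weights in \eqref{eq:Wilson} are accounted for. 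I would verify the sign of this leading coefficient in a short explicit computation (analogous to, but more involved than, the $U_3$ computation one can do directly), and then bound the remainder: $U_n = (\text{positive leading term}) + O(e^{-(c+\delta)\beta})$ for some $\delta>0$, which is positive once $\beta \geq \beta_{n,m}^*$.

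The main obstacle I anticipate is precisely this sign-of-the-leading-term step: unlike the Wilson-loop expectations themselves (which are positive by Ginibre/Griffiths), neither $U_n$ nor the individual cluster weights have an a priori sign, so the argument hinges on an honest identification of which cluster dominates and an honest computation of its signed weight after the alternating factorial coefficients $(-1)^{|\mathfrak P|-1}(|\mathfrak P|-1)!$ are summed over partitions. A secondary difficulty is bookkeeping: ensuring the cluster expansion of \cite{firstpaper} converges uniformly in $N$ for the chosen (fixed, $n$-dependent) loop configuration, so that the infinite-volume limit \eqref{eq:Ginibre} may be taken and the estimate survives, and ensuring that ``disjoint'' loops can genuinely be placed in the required geometric pattern in dimension $m \geq 3$ (which is why the hypothesis excludes $m=2$, where the dual picture is more rigid). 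The requirement $m\ge 3$ presumably enters through the need for enough room to route $n$ loops around a common minimal excitation without them intersecting.
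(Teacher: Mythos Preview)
Your proposal and the paper's proof share the same starting ingredient (the low-temperature cluster expansion of \cite{firstpaper} for $\log \E[W_{\gamma_P}]$), but the strategies diverge sharply after that, and your outline contains two concrete gaps.

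First, your geometric setup is internally inconsistent: in step one you place the $\gamma_i$ ``far apart'', but in step three you want them to border a ``common plaquette-tube'' so that a single minimal excitation toggles all $n$ loops. These cannot both hold. More importantly, the ``far apart'' choice is exactly wrong for this problem: if the loops are mutually distant then $\Psi_\beta[I]\to 0$ for every $|I|\ge 2$, each ratio $a_{\mathfrak P}/a_{\{[n]\}}$ tends to $(-1)^{|\mathfrak P|-1}(|\mathfrak P|-1)!$, and by the standard M\"obius identity $\sum_{\mathfrak P\in\mathfrak F_n}(-1)^{|\mathfrak P|-1}(|\mathfrak P|-1)!=0$ the leading order of $U_n$ cancels completely. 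You would then be forced into exactly the delicate next-order sign computation you flag as your ``main obstacle'', and you do not carry it out. Second, your formal setup in step two (``$\Phi(C)=0$ unless $C$ is anchored to loops indexed within $P$'') is not what the $\Z_2$ cluster expansion actually gives: for $\mathcal V\in\Xi[I]$ the contribution to $-\log\E[W_{\gamma_P}]$ is $2\Psi_\beta(\mathcal V)\cdot(|P\cap I|\bmod 2)$, a \emph{parity} condition rather than an inclusion condition, so the clean ``sum over $n$-connected clusters'' picture does not hold as stated.

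The paper avoids the sign problem altogether by a different mechanism. It stacks the loops at distance one (Definition~\ref{Def:Wilsonloops}), writes $U_n=a_{\{[n]\}}\bigl(1+V_n^+ + V_n^-\bigr)$ with $a_{\{[n]\}}=\E[\prod_i W_{\gamma_i}]>0$ by Griffiths, and then shows $|V_n^-|<1$. The point is that for every partition $\mathfrak P\neq\{[n]\}$ there is a consecutive pair $\{i,i+1\}$ split across blocks (Lemma~\ref{lem:comesatend}(f)), and the corresponding $c_{\{i,i+1\},\mathfrak P}=2$ forces a factor $\exp\bigl(-4\Psi_\beta[\{i,i+1\}]\bigr)$ into $b_{\mathfrak P}$. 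Since $\Psi_\beta[\{i,i+1\}]\sim |\gamma_1|\,e^{-4(4(m-1)-2)\beta}$ (Lemma~\ref{lem:firstorder}), one first takes $\beta$ large enough that this term dominates the error $\Psi_\beta[I]$ for all other $I$ (Lemma~\ref{lem:restorders}), and \emph{then} takes $|\gamma_1|$ large to make every $|b_{\mathfrak P}|$ smaller than $1/\mathscr S(n)$. No cluster-by-cluster sign analysis is needed: positivity comes entirely from $a_{\{[n]\}}>0$, and the freedom in the theorem to choose loops depending on $\beta$ is used essentially.
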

\begin{Remark}
In Theorem \ref{thm:1}, we assume that the loops $\gamma_1,...,\gamma_n$ are disjoint. If this assumption is removed, then $U_n$ can already take negative values for $ n = 3$. To see this, recall that $\E[W_{\gamma}] \in (0,1)$ for any value of $\beta >0$, $m \geq 2$ and any loop $\gamma$. 
Choose any Wilson loop $\gamma_1$ and set $\gamma_1=\gamma_2=\gamma_3$. Then, 
   \[
   U_3(W_{\gamma_1},W_{\gamma_2},W_{\gamma_3}) = 2 \E[W_{\gamma_1}](\E[W_{\gamma_1}]-1) < 0.
   \]
   Similarly, if we choose $\gamma_1=\gamma_2$ and let $\gamma_3$ be any loop, one obtains,
   \[
   U_3(W_{\gamma_1},W_{\gamma_2},W_{\gamma_3}) = -2\E[W_{\gamma_1}] \mathsf{Cov}(W_{\gamma_1},W_{\gamma_3}) < 0,
   \]
   as $\mathsf{Cov}(W_{\gamma_1},W_{\gamma_3}) = U_2(W_{\gamma_1},W_{\gamma_3}) > 0$. \newline
   In the case $n=4,5$, if one takes $\gamma_1=...=\gamma_n$, then
   \begin{align*}
         \begin{cases} U_4(W_{\gamma_1},W_{\gamma_2},W_{\gamma_3},W_{\gamma_4})=-6\E[W_{\gamma_1}]^4+8\E[W_{\gamma_1}]^2-2, \\
          U_5(W_{\gamma_1},W_{\gamma_2},W_{\gamma_3},W_{\gamma_4},W_{\gamma_5}) = 8\E[W_{\gamma_1}](3\E[W_{\gamma_1}]^4-5\E[W_{\gamma_1}]^2+2).
          \end{cases}
   \end{align*}
    From this, it follows that both signs can be obtained for $U_4,U_5$ by varying $\beta$, as  $\E_\beta[W_{\gamma_1}]$ takes values in the neighborhoods of $0$ and $1$ by Lemma \ref{lem:limitbeta} and Lemma \ref{lem:limitbeta1}. One deduces that if the loops $\gamma_1,...,\gamma_n$ are chosen to be equal, then the result of Theorem \ref{thm:1} does not hold.
\end{Remark}
\begin{Remark}
  If we consider the case $m=2$ in Theorem \ref{thm:1}, then one obtains that for any disjoint loops $\gamma_1,...,\gamma_n$ for any value of $\beta > 0$, \[
  U_1(W_{\gamma_1}) > 0, ~~ U_n(W_{\gamma_1},...,W_{\gamma_n}) =0.
  \]
  The first Ursell function being positive is clear, as the expectation of any Wilson observable is positive. The result for $n \geq 2$ follows from the fact that two disjoint Wilson observables have a covariance equal to $0$, e.g., \cite[Section 2.2]{DROUFFE19831}. In that case, one obtains that for any partition $\mathfrak{P} \in \mathfrak{F}_n$,
  \[
  \prod_{P \in \mathfrak{P}} \E[\prod_{i \in P} W_{\gamma_i}] = \prod_{i \in [n]} \E[W_{\gamma_i}].
  \]
  This implies $U_n(W_{\gamma_1},...,W_{\gamma_n}) = 0$.
\end{Remark}
In Section \ref{sec:preliminaries}, we introduce the main notation and background related to lattice gauge theory and cluster expansion. Section \ref{sec:pfthm2} is devoted to proving Theorem \ref{thm:2}. The remaining sections deal with the proof of Theorem \ref{thm:1}. In Section~\ref{sec:Ursellfct}, we analyze the expression of the Ursell function in terms of its cluster expansion representation, and in Section~\ref{sec:Proofmainthm}, we fix the loops and combine the previous results to complete the proof of Theorem~\ref{thm:1}.
\section{Preliminaries}
\label{sec:preliminaries}
In this section, we introduce the main tools we need when working with lattice gauge theory. We also introduce notation in the context of Ursell functions applied to Wilson loops. \newline 
From now on, we will assume that the gauge group is $G=\Z_2$.
\subsection{Discrete Exterior Calculus}
\label{subsec:DEC}
As a crucial tool for working with lattice gauge theory, we will introduce the main ideas and notations related to discrete exterior calculus. We refer to \cite[Section 2]{forsström2021wilsonloopsfiniteabelian} and \cite{Forsstr_m_2023} for a detailed exposition of the topic. \newline 
The central object in discrete exterior calculus is an \textit{oriented k-cell}. For that purpose, we first introduce oriented edges (or oriented 1-cells).  the endpoint of the basis edges in $\Z^m$ as being $\tilde{e}_1:=(1,0,...,0), ..., \tilde{e}_m:=(0,...,0,1)$ and we let $d\tilde{e}_1,...,d\tilde{e}_m$ denote the oriented edges that start at the origin and end in $\tilde{e}_1,...,\tilde{e}_m$. A directed edge $e$ in $\Z^m$ is called \textit{positively oriented} if it equals a translation of one of those unit vectors, i.e., there exists $x \in \Z^m$ and $i \in [m]$ such that $e=x+d\tilde{e}_i$. \newline 
Now, we introduce the wedge product $\wedge$ that satisfies the following properties for any two oriented edges $e_1,e_2$
\[
e_1 \wedge e_2 = - (e_2 \wedge e_1) = (-e_2) \wedge e_1 = e_2 \wedge (-e_1), ~ \text{and} ~ e_1 \wedge e_1 = 0.
\]
We now have the ingredients to define k-cells. Let $e_1, ...,e_k$ be oriented edges such that $e_1 \wedge ...\wedge e_k \neq 0$. Their wedge product $e_1 \wedge ...\wedge e_k$ is then called an \textit{oriented k-cell}. If there is $x \in \Z^m$ such that $e_i = x+d\tilde{e}_{j_{i}}$ for $j_1 < ...<j_k$, then $e_1 \wedge...\wedge e_k$ is called \textit{positively oriented}. If $-(e_1 \wedge ...\wedge e_k)$ is positively oriented, then $e_1 \wedge ...\wedge e_k$ is called \textit{negatively oriented}.
\newline 
The set of k-cells $e_1 \wedge ...\wedge e_k$ contained in $B_N$, i.e., such that all start and endpoints of $(e_i)_{i \in [k]}$ are in $B_N$ is denoted by $C_k(B_N)$. 
We note that $1$-cells are edges, and $2$-cells plaquettes as introduced in Section \ref{sec:introduction}, namely, $P(B_N)=C_2(B_N)$. \newline
The set of positively oriented $k$-cells is denoted by $C_k(B_N)^+$. Thus, $e \in C_1(B_N)^+$ is a positively oriented edge. Similarly, $p \in C_2(B_N)^+$ is a positively oriented plaquette.  \newline
A formal sum of positively oriented $k$-cells $c \in C_k^+(B_N)$ with integer coefficients is called a $k$\textit{-chain}. The set of all $k$-chains is denoted by $C_k(B_N,\Z)$. For $\chi \in C_k(B_N,\Z)$ a k-chain and $y \in C_{k}(B_N)^+$ a positively oriented cell, then $\chi[y] \in \Z$ is the coefficient associated to the cell $y$ in $\chi$. If $\chi \in C_k(B_N,\Z)$ is a k-chain such that for any $y \in C_{k}(B_N)^+$ it holds that $\chi[y] \in \{0,\pm1\}$, then we write the k-chain as a set for convenience: 
\[
\begin{cases}
    y \in \chi, ~ \text{if} ~~ \chi[y]=1, \\
    -y \in \chi,  ~ \text{if} ~ ~ \chi[y]=-1.
\end{cases}
\]
\newline 
Now fix some $c := e_1 \wedge ... \wedge e_k \in C_k(B_N)$, with $k \geq 2$ and $e_i = x + d\tilde{e}_{j_{i}}$ for $x \in \Z^m$. One then defines the \textit{boundary} of $c$, denoted $\partial c \in C_{k-1}(B_N,\Z)$ as the ($k-1$)-chain
\begin{align*}
\partial c := \sum_{i=1}^k & (-1)^{i+1} \big( (e_1 \wedge ... \wedge e_{i-1} \wedge e_{i+1} \wedge ... \wedge e_{k} ) \\ - \big(y_r + &d\tilde{e}_{j_{1}} \wedge ... \wedge y_r+ d\tilde{e}_{j_{i-1}} \wedge y_r+ d\tilde{e}_{j_{i+1}} \wedge ... \wedge y_r+ d\tilde{e}_{j_{k}} )  \big),
\end{align*}
where $y_r := x + d\tilde{e}_{j_r}$. Similarly, if $c \in C_k(B_N)$ and $k \leq m-1$, then one defines the \textit{coboundary} of $c$, denoted by $\hat{\partial}c$, as the $(k+1)$-chain
\[
\hat{\partial}c=\sum_{c': c \in \partial c'} c'.
\] 
In particular, for $p \in C_2(B_N)$, $\partial p \in C_1(B_N,\Z)$ is the formal sum of the four edges in its boundary and $\hat{\partial}p$ is the formal sum of the 3-cells (whose numbers depend on the dimension $m$) in which $p$ is in the boundary. This is shown in Figure \ref{fig:boundaryandcob}. 
\begin{figure}[h]
\centering

\begin{subfigure}[t]{0.45\textwidth}
\centering
\begin{tikzpicture}[scale=2,>=stealth,line cap=round,line join=round]
  \coordinate (A) at (0,0);
  \coordinate (B) at (1,0);
  \coordinate (C) at (1,1);
  \coordinate (D) at (0,1);

  \fill[red!20] (A) -- (B) -- (C) -- (D) -- cycle;

  \draw[->,thick,red!50!black] (A) -- node[below,yshift=-2pt] {$e_1$} (B);
  \draw[->,thick,red!50!black] (B) -- node[right,xshift=2pt] {$e_2$} (C);
  \draw[->,thick,red!50!black] (C) -- node[above,yshift=2pt] {$e_3$} (D);
  \draw[->,thick,red!50!black] (D) -- node[left,xshift=-2pt] {$e_4$} (A);

  \node at (0.8,0.2) {$p$};
  \draw[-{Stealth[length=2.4mm,width=1.8mm]},shorten >=0.6pt,red!70!black]
    (0.5,0.5) ++(0.23,0) arc (0:300:0.23);
\end{tikzpicture}
\caption{Plaquette $p \in C_2(B_N)^+$ with \mbox{$\partial p=e_1+e_2+e_3+e_4$}.}
\end{subfigure}
\hspace{0.05\textwidth}
\begin{subfigure}[t]{0.45\textwidth}
\centering
\begin{tikzpicture}[scale=2,>=stealth,line cap=round,line join=round]
  \coordinate (A) at (0,0);
  \coordinate (B) at (1,0);
  \coordinate (C) at (1,1);
  \coordinate (D) at (0,1);

  \coordinate (A') at ($(A)+(0.40,0.30)$);
  \coordinate (B') at ($(B)+(0.40,0.30)$);
  \coordinate (C') at ($(C)+(0.40,0.30)$);
  \coordinate (D') at ($(D)+(0.40,0.30)$);

  \coordinate (A'') at ($(A)+(-0.40,-0.30)$);
  \coordinate (B'') at ($(B)+(-0.40,-0.30)$);
  \coordinate (C'') at ($(C)+(-0.40,-0.30)$);
  \coordinate (D'') at ($(D)+(-0.40,-0.30)$);

  \fill[red!20] (A)--(B)--(C)--(D)--cycle;
  \draw[thick,red!50!black] (A)--(B)--(C)--(D)--cycle;
  \node at (0.9,0.1) {$p$};

  \draw[gray] (A')--(B')--(C')--(D')--cycle;
  \draw[gray] (A)--(A') (B)--(B') (C)--(C') (D)--(D');
  \node[gray!60!black] at (1.3,1.1) {$c_1$};

  \draw[gray] (A'')--(B'')--(C'')--(D'')--cycle;
  \draw[gray] (A)--(A'') (B)--(B'') (C)--(C'') (D)--(D'');
  \node[gray!60!black] at (-0.3,0.6) {$c_2$};

  \draw[-{Stealth[length=2.4mm,width=1.8mm]},shorten >=0.6pt,red!70!black]
    (0.5,0.5) ++(0.2,0) arc (0:280:0.15);

\end{tikzpicture}
\caption{Same plaquette in $\mathbb{Z}^3$, with the 3-cells $c_1,c_2 \in \hat{\partial}p$.}
\end{subfigure}

\caption{A plaquette $p$ with its boundary on Figure (a) and coboundary on Figure (b) in the case $m=3$.}
\label{fig:boundaryandcob}
\end{figure}
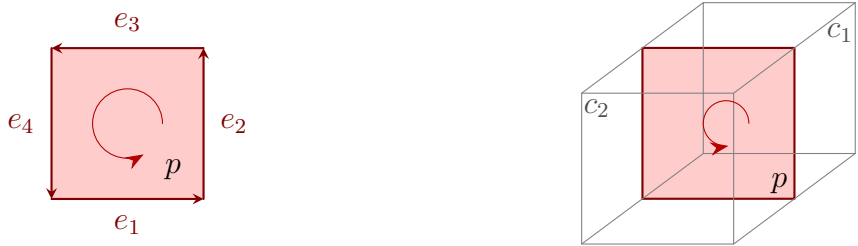
\newline 
A function $f : C_k(B_N) \to G$ is called a $k-$\textit{form} if $f(-c)=-f(c) \in G$ for all $c \in C_k(B_N)$. The set of $k$-forms is denoted by $\Omega_k(B_N,G)$. The support of a $k$-form is denoted by $\supp ~f := \{c \in C_k(B_N) : f(c) \neq 0\}$ while the support on positively oriented cells is written as $\supp ~f^+ := \{c \in C_k^+(B_N) : f(c) \neq 0\}$. \newline 
For $k \leq m-1$ one defines the \textit{exterior derivative}, denoted $d$, as the following mapping,
\[
d : \Omega_k(B_N,G) \to \Omega_{k+1}(B_N,G), ~(d f)(y) := \sum_{c \in \partial y}f(c), ~~ y \in C_{k+1}(B_N).
\]
A 1-chain $\gamma \in C_1(B_N,\Z)$ that has connected support and satisfies $\gamma(e) \in \{-1,0,1\}$ for all $e \in C_1(B_N)$ and $\partial \gamma = 0$ is called a \textit{loop}.  
A 2-chain $q \in C_2(B_N,\Z)$, where $q(p) \in \{-1,0,1\} ~ \text{for all} ~  p \in C_2(B_N)$ and $\partial q = \gamma$, is called an \textit{oriented surface with boundary $\gamma$}. \newline 
By the discrete version of \textit{Stokes' theorem}, for a one-form $\sigma \in \Omega_1(B_N,G)$ and any oriented surface $q$ with boundary $\gamma$, one has
\begin{equation}
\label{eq:Stokesthm}
    \sum_{e\in \gamma} \sigma(e) =  \sum_{p \in q} d\sigma(p).
\end{equation}
 In particular, if $q,q' \in C_2(B_N,\Z)$ have the same boundary, then 
 \[
 \sum_{p \in q} d\sigma(p)=\sum_{p \in q'} d\sigma(p)
 \]
by Stokes' Theorem. \newline
If $f \in \Omega_k(B_N,G)$, then $f$ is said to be \textit{closed} if $df = 0$. The set of closed $k$-forms is denoted by $\Omega_k^0(B_N,G)$.
Another key result in discrete exterior calculus is the \textit{discrete Poincaré lemma} \cite[Lemma 2.2]{chatterjee2020wilsonloopsisinglattice}, which states that for any  $\nu \in \Omega_2^0(B_N,G)$ there exists a 1-form $\sigma \in \Omega_1(B_N,G)$ such that $d\sigma=\nu$. Moreover, the number of such $\sigma$ only depends on the group $G$ and the box $B_N$, not on the specific choice of $\nu$.
\subsection{Cluster expansions at low temperature}
\label{subsec:ClusterExpansion}
Cluster expansions are a widely used tool in statistical mechanics, which allows representing a partition function as a series of weights of \textit{clusters}. We refer to \cite[Chapter 5]{stmechanics} for an introduction to cluster expansions, as well as \cite{ClusteronLGT82} for a discussion on its applications to lattice gauge theory. We will here be interested in the cluster expansion at low temperature, i.e., for $\beta$ large, and will follow the notation of \cite{firstpaper}. In~ this section, we will focus primarily on the notation and the key aspects required for the proofs presented later in the paper.
Let us recall some concepts and notation. \newline 
One defines the adjacency graph $\mathcal{G}_2$ as the graph having vertex set $C_2(B_N)^+$, with an edge between $p_1,p_2 \in C_2(B_N)^+$ if their coboundaries intersect, i.e., if $\supp ~\hat{\partial}p_1 ~\cap ~ \supp ~\hat{\partial}p_2 \neq \emptyset$. In other words, $p_1,p_2$ are connected in the graph $\mathcal{G}_2$, denoted by $p_1 \sim p_2$, if  there exists a 3-cell $c \in C_3(B_N)$ that contains both $p_1,p_2$ (up to orientation) in its boundary. \newline 
We call a closed 2-form $\nu \in \Omega_2^0(B_N,\Z_2)$ that has no support on the boundary of $B_N$ a \textit{vortex} if $\supp \nu^+$ induces a connected subgraph of $\mathcal{G}_2$. The set of vortices is denoted by $\Lambda(B_N)$. For two vortices $\nu_1,\nu_2 \in \Lambda(B_N)$, we write $\nu_1 \sim \nu_2$ if there exist some plaquettes $p_1 \in \supp ~ \nu_1^+,~ p_2 \in \supp ~ \nu_2^+$ such that $p_1 \sim p_2$ in the graph $\mathcal{G}_2$. In that case we call $\nu_1$ and $\nu_2$ \textit{connected}. Then, for a two-chain $q \in C_2(B_N,\Z)$, one defines
\begin{equation}
\label{eq:sumtwochains}
\nu(q) := \sum_{p \in q} \nu(p),
\end{equation}
which is a sum in $\Z_2$.
Now consider a multiset composed of different vortices in $\Lambda(B_N)$,
\[
\mathcal{V} = \{\nu_1,...,\nu_1,\nu_2,...,\nu_2,...,\nu_k,...,\nu_k\}.
\]
We call $\mathcal{V}$ a \textit{vortex cluster} if for any partition of two sets of $\mathcal{V}$ denoted by $\mathcal{V}_1,\mathcal{V}_2 \subset \mathcal{V}$, there is some $(\nu_1,\nu_2) \in \mathcal{V}_1 \times \mathcal{V}_2$ such that $\nu_1 \sim \nu_2$. In simpler terms, we can't divide $\mathcal{V}$ into two multisets that aren't connected. The set of all vortex clusters contained in $B_N$ is denoted by $\Xi_N$, which we will denote by $\Xi$ in this paper. \newline 
For a vortex cluster $\mathcal{V}$ we define,
\begin{equation}
\label{eq:notation||}
|\mathcal{V}| = \sum_{\nu \in \mathcal{V}} n_\mathcal{V}(\nu)|\supp  \nu^+|, ~~ \supp(\mathcal{V}) = \bigcup_{\nu \in \mathcal{V}} \supp  \nu ~~ \text{and} ~~ n(\mathcal{V}) =  \sum_{\nu \in \mathcal{V}} n_\mathcal{V}(\nu),
\end{equation}
where for $\nu \in \Lambda(B_N)$, $n_\mathcal{V}(\nu)$ denotes the number of times the vortex $\nu$ appears in the multiset $\mathcal{V}$. \newline 
If $q \in C_2(B_N,\Z)$, we define,
\begin{equation}
\label{eq:chain}
\mathcal{V}(q):=\sum_{\nu \in \mathcal{V}} \nu(q),
\end{equation}
which is a sum in $\Z_2$ over the multiset $\mathcal{V}$. If $\mathcal{V}(q)=1$, we say that $\mathcal{V}$ is \textit{interacting} with $q$. \newline 
We now define the \textit{activity} of a 2-form $\nu \in \Omega_2(B_N,\Z_2)$, by 
\begin{align*}
\phi_\beta(\nu) &:= \myexp\Big(\sum_{p \in C_2(B_N)} -2\beta \nu(p)\Big) \\ &= \myexp(-2\beta |\supp ~ \nu|) = \myexp(-4\beta |\supp ~ \nu^+|).
\end{align*}
This definition can be extended to vortex clusters $\mathcal{V} \in \Xi$, by letting
\[
\phi_\beta(\mathcal{V}) := \prod_{\nu \in \mathcal{V}} \phi_\beta(\nu)^{n_\mathcal{V}(\nu)}=\myexp(-4\beta|\mathcal{V}|).
\]
From the definition of the activity, it follows that the Gibbs measure in \eqref{eq:Gibbsmeasure} equals
\begin{equation}
\label{eq:partfct1}
\mu_{N,\beta}(\sigma) = \frac{\phi_\beta(d\sigma)}{\hat{Z}_{\beta,N}},
\end{equation}
where the partition function equals
\begin{equation}
\label{eq:defoftildeZ}
\hat{Z}_{\beta,N} := \sum_{\sigma \in \Omega_1(B_N,\Z_2)} \phi_\beta(d\sigma).
\end{equation}
Similarly, one deduces an expression for the Gibbs measure on vortices $\nu \in \Lambda(B_N)$,
\begin{equation}
\label{eq:newpartfct1}
\mu_{N,\beta}(\{\sigma: d\sigma=\nu\}) = \sum_{\sigma: d\sigma=\nu} \frac{\phi_\beta(\nu)}{\hat{Z}_{\beta,N}}=\frac{|\{\sigma: d\sigma=\nu\}|\phi_\beta(\nu)}{\hat{Z}_{\beta,N}}=: \frac{\phi_\beta(\nu)}{\tilde{Z}_{\beta,N}}.
\end{equation}
Here, the first equality comes from \eqref{eq:partfct1} and the last equality is well-defined as a consequence of Poincar\'e's lemma : $|\{\sigma: d\sigma=\nu\}|$ is independent of $\nu$. \newline 
The idea of cluster expansions is to write the logarithm of the partition function $\tilde{Z}_{\beta,N}$ as a sum over weights of vortex clusters $\mathcal{V} \in \Xi$. 
To illustrate this, we define the following. Let $k \geq 1$ and denote by $G^k$ the set of undirected connected graphs with vertex set $\{1,...,k\}$. For $G \in G^k$, we denote the set of edges by $E(G)$. Now, for any $\nu_1,...,\nu_k \in \Lambda(B_N)$, define
\begin{equation}
\label{eq:DefUrsellsecond}
\mathcal{U}(\nu_1,...,\nu_k) := \frac{1}{k!} \sum_{G \in \mathcal{G}^k} (-1)^{|E(G)|} \prod_{(i,j) \in E(G)} \mathbbm{1}\{\nu_i \sim \nu_j\}.
\end{equation}
For $\mathcal{V} \in \Xi$ such that $n(\mathcal{V})=k$, containing $\nu_1,...,\nu_k$ (with multiplicities), we define
\[
\mathcal{U}(\mathcal{V}):=k!~ \mathcal{U}(\nu_1,...,\nu_k).
\]
Now we define the \textit{correlated activity} of the vortex cluster $\mathcal{V} \in \Xi$ as
\[
\Psi_\beta(\mathcal{V}) := \mathcal{U}(\mathcal{V})\phi_\beta(\mathcal{V}).
\]
From this, it follows that
\begin{equation}
\label{eq:Decompositioncorracti}
    \Psi_\beta(\mathcal{V})=\mathcal{U}(\mathcal{V})\myexp\big(-4\beta \sum_{\nu \in \mathcal{V}}n_\mathcal{V}(\nu)|\supp ~ \nu^+|\big)=\mathcal{U}(\mathcal{V})\myexp(-4\beta|\mathcal{V}|). 
\end{equation}
Using this notation, \cite[Lemma 3.3]{firstpaper} proves that at temperature low enough, for $\mathcal{V} \in \Xi$,
\begin{equation}
\label{eq:partfct3}
    \mylog(\tilde{Z}_{\beta,N}) = \sum_{\mathcal{V}\in \Xi} \Psi_\beta(\mathcal{V}).
\end{equation}
At this point, we remark that, unlike the sum in \eqref{eq:defoftildeZ}, which is a finite sum, the sum over vortex clusters \eqref{eq:partfct3} is an infinite series. Hence, the delicate part in applying cluster expansions is to ensure that $\beta$ is large enough for the sum to be absolutely convergent. This is done in \cite[Lemma 3.2]{firstpaper}, where the authors use the fact that when $\beta$ is large enough, the series over correlated activities of $\mathcal{V}$ containing some vortex $\nu \in \Lambda(B_N)$ is bounded by a function depending on $\supp \nu$. We will denote the smallest value that makes \eqref{eq:partfct3} absolutely convergent by $\beta_0^m > 0$, which we will write $\beta_0$ when $m$ is clear from the context. Hence, for $\beta \geq \beta_0^m$, the cluster expansion is well defined for Ising lattice gauge theory on $\Z^m$. \newline 
We can use \eqref{eq:partfct3} to write the expectation of Wilson loops. Let $q \in C_2(B_N,\Z)$ be an oriented surface contained in $B_N$ such that $\partial q=\gamma$ for some loop $\gamma$. Then, by \cite[Proposition 3.5]{firstpaper} we obtain
\begin{equation}
    \label{eq:expwilsonloop}
    -\mylog\big(\E_{\beta,N}[W_\gamma]\big)=\sum_{\mathcal{V} \in \Xi} \Psi_\beta(\mathcal{V}) \Big(1- \rho\big(\mathcal{V}(q)\big)\Big) = \sum_{\mathcal{V} \in \Xi} 2 \mathcal{V}(q)\Psi_\beta(\mathcal{V}),
\end{equation}
where the second equality follows from the fact that the gauge group is $\Z_2$. 
Furthermore, \cite[Theorem 1.2]{firstpaper} proves that for any loop $\gamma \in C_1(B_N,\Z)$, the following holds
\begin{equation}
\label{eq:firstorderexpectation}
\mylog\big(\E_{N,\beta}[W_\gamma]\big) = |\gamma|\big(\myexp(-4(2(m-1)\beta)\big)+o(\myexp(-4(2(m-1)\beta)),
\end{equation}
where the rest term $o(\myexp(-4(2(m-1)\beta)))$ can be made independent of $N$.
\subsection{Limit Results}
In this section, we will look at Wilson loops expectation when $\beta \to 0$ or $\beta \to \infty$. Lemma \ref{lem:limitbeta} will be useful in the next section to prove Theorem \ref{thm:2}.
Recall that the infinite volume expectation is taken as a limit when the box $B_N$ becomes larger \eqref{eq:Ginibre}.
\begin{Lemma}
\label{lem:limitbeta}
   Consider Ising lattice gauge theory  on the lattice $\Z^m$, $m \geq 3$. Let $\gamma$ be any closed loop. Then,
   \[
   \lim_{\beta \to \infty} \E_\beta[ W_\gamma] = 1.
   \]
\end{Lemma}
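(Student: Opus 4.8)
The plan is to read the statement off the first-order expansion \eqref{eq:firstorderexpectation}, which already isolates the leading $\beta$-dependence of $\mylog\big(\E_{N,\beta}[W_\gamma]\big)$, combined with the a priori bound $\E_\beta[W_\gamma]\in(0,1]$ (the Wilson observable takes values in $\{\pm1\}$, and it is positive by Griffith's first inequality). The decisive point is that $\gamma$ is fixed throughout, so $|\gamma|$ does not depend on $\beta$ or on $N$; hence the right-hand side of \eqref{eq:firstorderexpectation} vanishes as $\beta\to\infty$ and we are essentially done.

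In detail: restrict to $\beta\ge\beta_0^m$ so that the cluster expansion, hence \eqref{eq:firstorderexpectation}, is valid. For every $N$ this yields
\[
\mylog\big(\E_{N,\beta}[W_\gamma]\big)=|\gamma|\,\myexp\big(-8(m-1)\beta\big)+R_{N,\beta},\qquad |R_{N,\beta}|=o\big(\myexp(-8(m-1)\beta)\big),
\]
with $R_{N,\beta}$ bounded uniformly in $N$, as recorded after \eqref{eq:firstorderexpectation}. Since $\E_\beta[W_\gamma]=\lim_{N\to\infty}\E_{N,\beta}[W_\gamma]$ exists by the Ginibre inequalities and lies in $(0,1]$, the uniformity of $R_{N,\beta}$ together with continuity of $\mylog$ on $(0,1]$ lets us pass to the limit $N\to\infty$, giving
\[
\mylog\big(\E_\beta[W_\gamma]\big)=|\gamma|\,\myexp\big(-8(m-1)\beta\big)+o\big(\myexp(-8(m-1)\beta)\big).
\]
As $m\ge3$ and $|\gamma|<\infty$ is fixed, the right-hand side tends to $0$ as $\beta\to\infty$, whence $\mylog\big(\E_\beta[W_\gamma]\big)\to0$ and therefore $\E_\beta[W_\gamma]\to1$ by continuity of $\myexp$.

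A self-contained alternative, not invoking \eqref{eq:firstorderexpectation}, is a Peierls argument. Fix a surface $q\in C_2(B_N,\Z)$ with $\partial q=\gamma$ depending only on $\gamma$, so that Stokes' theorem \eqref{eq:Stokesthm} gives $W_\gamma=\rho\big(\sum_{p\in q}d\sigma(p)\big)$ and hence $\{W_\gamma=-1\}\subseteq\bigcup_{p\in q}\{d\sigma(p)\neq0\}$. Then bound $\mu_{N,\beta}(d\sigma(p)\neq0)$ by splitting $d\sigma$ into its vortex components and factoring out the component through $p$, using \eqref{eq:newpartfct1} and $\tilde Z_{\beta,N}\ge\phi_\beta(0)=1$, to obtain $\mu_{N,\beta}(d\sigma(p)\neq0)\le\sum_{\text{vortex }\nu\ni p}\phi_\beta(\nu)$, which by the standard Peierls count over connected plaquette sets is bounded uniformly in $N$ and is $O\big(\myexp(-8(m-1)\beta)\big)$; a union bound over the fixed number $|q|$ of plaquettes in $q$ then gives $|1-\E_{N,\beta}[W_\gamma]|\le2|q|\,O\big(\myexp(-8(m-1)\beta)\big)$ uniformly in $N$, and $N\to\infty$ followed by $\beta\to\infty$ concludes. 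In either route the only subtle point — and the one to get right — is the interchange of the limits $N\to\infty$ and $\beta\to\infty$, which rests on the uniformity in $N$ of the error term; beyond that there is no genuine obstacle.
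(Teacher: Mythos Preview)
Your first argument is correct and is exactly the paper's own proof: apply \eqref{eq:firstorderexpectation}, use that the $o$-term is uniform in $N$, and let $\beta\to\infty$. The Peierls alternative you sketch is a pleasant bonus but not needed here.
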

  \begin{proof}
Apply \eqref{eq:firstorderexpectation} and let $\beta \to \infty$. The claim follows as the term $\text{o}\!\left(e^{-4(2(m-1)\beta)}\right)$ can be made independent of $N$.
  \end{proof}
\begin{Lemma}
\label{lem:limitbeta1}
    Consider Ising lattice gauge theory  at high temperature on the lattice $\Z^m, ~ m\geq 3$, then the following holds for any loop $\gamma$,
    \[
    \lim_{\beta \to 0} \E_\beta[ W_\gamma]= 0.
    \]
\end{Lemma}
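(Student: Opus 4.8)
The plan is to prove the stronger, $N$-uniform estimate $\bigl|\E_{N,\beta}[W_\gamma]\bigr|\le\tanh\bigl(2(m-1)\beta\bigr)$, valid for every $N$ with $\gamma\subseteq E(B_N)$; letting $N\to\infty$ then gives $\bigl|\E_\beta[W_\gamma]\bigr|\le\tanh\bigl(2(m-1)\beta\bigr)$, and letting $\beta\to 0$ proves the lemma. The reason such an estimate should hold is that at small $\beta$ an individual spin $\sigma(e)$ is only weakly constrained by the rest of the configuration, since it interacts with only the $2(m-1)$ plaquettes containing $e$.

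First, fix $N$ with $\gamma\subseteq E(B_N)$, choose an edge $e_0\in\gamma$, and let $\mathcal{F}$ be the $\sigma$-algebra generated by the spins $\{\sigma(e):e\notin\{e_0,-e_0\}\}$. Under $\mu_{N,\beta}$, the conditional law of $\sigma(e_0)$ given $\mathcal{F}$ depends only on the plaquettes $p\in P(B_N)$ containing $e_0$, of which there are at most $2(m-1)$; replacing $\sigma(e_0)$ by $\sigma(e_0)+1$ changes each such $d\sigma(p)$ by $1$ in $\Z_2$ and hence negates $\Re\rho(d\sigma(p))\in\{-1,+1\}$. Let $E_0$ denote $\sum_{p\ni e_0}\Re\rho(d\sigma(p))$ evaluated on the event $\sigma(e_0)=0$; the above flip sends $E_0\mapsto -E_0$, so a direct computation gives $\E_{N,\beta}\bigl[\rho(\sigma(e_0))\mid\mathcal{F}\bigr]=\tanh(\beta E_0)$, and since $|E_0|\le 2(m-1)$ with $\tanh$ odd and increasing, $\bigl|\E_{N,\beta}[\rho(\sigma(e_0))\mid\mathcal{F}]\bigr|\le\tanh\bigl(2(m-1)\beta\bigr)$ almost surely.

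Second, factor $W_\gamma=\rho(\sigma(e_0))\,Y$, where $Y:=\prod_{e\in\gamma,\,e\ne e_0}\rho(\sigma(e))$ is $\mathcal{F}$-measurable with $|Y|=1$. By the tower property, $\bigl|\E_{N,\beta}[W_\gamma]\bigr|=\bigl|\E_{N,\beta}\bigl[Y\,\E_{N,\beta}[\rho(\sigma(e_0))\mid\mathcal{F}]\bigr]\bigr|\le\E_{N,\beta}\bigl[|Y|\bigr]\,\tanh\bigl(2(m-1)\beta\bigr)=\tanh\bigl(2(m-1)\beta\bigr)$, which is the desired $N$-uniform bound; taking $N\to\infty$ and then $\beta\to 0$ concludes.

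I do not expect a genuine obstacle: the only slightly delicate points are the orientation bookkeeping in $d\sigma(p)$ and the count of plaquettes through $e_0$ (exactly $2(m-1)$ once $N$ is large, and at most that in general), after which the conditional computation is immediate because $G=\Z_2$. A possible but heavier alternative is the high-temperature expansion: with $t=\tanh\beta$ one can write $\E_{N,\beta}[W_\gamma]$ as the ratio of $\sum_{Q:\,\partial Q=\gamma}t^{|Q|}$ to $\sum_{Q:\,\partial Q=0}t^{|Q|}$, both sums over plaquette-subsets $Q$ of $B_N$, with leading behavior $t^{A(\gamma)}$ where $A(\gamma)$ is the minimal area of a surface spanning $\gamma$; but making this estimate uniform in $N$ requires controlling the whole series, so the one-spin argument above is preferable.
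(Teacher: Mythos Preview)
Your argument is correct and takes a genuinely different route from the paper. The paper simply quotes a high-temperature current-expansion estimate from the literature,
\[
\E_{N,\beta}[W_\gamma]\le\frac{(4(m-1)\beta)^{\mathrm{area}(\gamma)}}{1-4(m-1)\beta},
\]
uniform in $N$, and lets $\beta\to 0$. Your one-spin conditioning argument is more elementary and entirely self-contained; it yields a weaker bound (linear in $\beta$ rather than of order $\beta^{\mathrm{area}(\gamma)}$), but that is all the lemma requires, and your approach avoids importing the current-expansion machinery. The ``heavier alternative'' you sketch at the end is essentially what the paper does.

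One small bookkeeping point that does not affect the conclusion: in this paper the Wilson action $S_N$ sums over \emph{all} oriented plaquettes $P(B_N)=C_2(B_N)$, not just positively oriented ones. Flipping $\sigma(e_0)$ (and hence $\sigma(-e_0)$) therefore affects $4(m-1)$ terms in the Hamiltonian---the $2(m-1)$ oriented plaquettes with $e_0\in\partial p$ together with the $2(m-1)$ with $-e_0\in\partial p$---so the correct conditional bound is $\bigl|\E_{N,\beta}[\rho(\sigma(e_0))\mid\mathcal{F}]\bigr|\le\tanh\bigl(4(m-1)\beta\bigr)$ rather than $\tanh\bigl(2(m-1)\beta\bigr)$. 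The rest of the argument goes through unchanged.
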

\begin{proof}
Using current expansions \cite[Proposition 5.5]{currentexpansion} proves that for any loop $\gamma$, if $N$ is large enough and $\beta$ small enough, then
\[
\E_{N,\beta}[W_\gamma] \leq \frac{(4(m-1)\beta)^{\text{area}(\gamma)}}{1-4(m-1)\beta}.
\]
Since the term $\operatorname{area}(\gamma)$ depends only on $\gamma$ and is positive, and since the right-hand side does not depend on $N$, the conclusion follows. 
\end{proof}
In the two-dimensional case, Ising lattice gauge theory is exactly solvable. This can be done using the technique of gauge fixing. Using that, one can then prove that for a closed loop $\gamma$ it holds that,
\begin{equation*}
\E_\beta[W_\gamma]=\tanh{(\beta)}^{\text{area}(\gamma)}.
\end{equation*}
This yields the limits,
\begin{equation}
\label{eq:limto1whenm2}
\lim_{\beta \to 0} \E_\beta[W_\gamma] = 0 ~~ \text{and} ~ \lim_{\beta \to \infty} \E_\beta[W_\gamma] = 1.
\end{equation}
\section{Proof of Theorem \ref{thm:2}}
\label{sec:pfthm2}
In this section, we consider the Ursell function for spins on edges $e \in E$, where $E$ is the set of directed edges in $\Z^m$ as defined in Section \ref{sec:introduction}. Our goal is to prove Theorem~\ref{thm:2}. For $e_1,...,e_n \in E$, we see from \eqref{eq:Wilson}, that the Ursell function takes the form,
\begin{equation}
\label{eq:Ursellonedges}
    U_n(\sigma(e_1),...,\sigma(e_n))= \sum_{\mathfrak{P} \in \mathfrak{F}_n} (-1)^{|\mathfrak{P}|-1} (|\mathfrak{P}|-1)! \prod_{P \in \mathfrak{P}} \E\big[\prod_{i \in P} \sigma(e_i)\big].
\end{equation}
If the formal sum of $(e_i)_{i \in I}$, with $I \subset [n]$, forms a loop denoted $\gamma$, we write in this section $U_n^{\sigma}(W_\gamma) := U_n(\sigma(e_1),...,\sigma(e_n))$. To prove Theorem \ref{thm:2}, we will use the following lemma.
\begin{Lemma}
\label{lem:Urselledges}
    Let $e_1,...,e_n \in E$ and let $\mathfrak{P} \in \mathfrak{F}_n$ be a partition. Assume that $\sigma \sim \mu_{N,\beta}$ with $\beta >0$.
    Then,
    \[
   \prod_{P \in \mathfrak{P}} \E\big[\prod_{i \in P} \sigma(e_i)\big] \neq 0 \iff \forall P\in\mathfrak{P} : \{e_i\}_{i \in P} ~~ \text{forms a loop} .
    \]
    In words, the expectation of the product of the spins vanishes unless the edges form a loop.
\end{Lemma}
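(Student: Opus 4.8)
The plan is to reduce the claim to a single block and then handle that block by symmetry considerations. Since a finite product of real numbers is nonzero exactly when each of its factors is nonzero, it suffices to show that for every subset $S\subseteq[n]$ one has $\E[\prod_{i\in S}\sigma(e_i)]\neq0$ if and only if $\{e_i\}_{i\in S}$ forms a loop, and then let $S$ range over the blocks of $\mathfrak P$. Throughout I identify $\Z_2$ with $\{\pm1\}$ via $\rho$, so that $\prod_{i\in S}\sigma(e_i)=\rho(\sigma(\gamma_S))$ with $\gamma_S:=\sum_{i\in S}e_i \in C_1(B_N,\Z)$, and I read ``$\{e_i\}_{i\in S}$ forms a loop'' as the condition $\partial\gamma_S\equiv0$ in $C_0(B_N,\Z_2)$, i.e. every vertex of $B_N$ is incident to an even number of the $e_i$ counted with multiplicity. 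Spelling it out this way, rather than through the literal definition of a loop as a connected $\{-1,0,1\}$-valued $1$-chain, is what lets the bookkeeping of orientations and repetitions go through cleanly.

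For the direction ``not a loop $\Rightarrow$ zero'' I would invoke the gauge invariance of $\mu_{N,\beta}$, the mechanism behind Elitzur's theorem. For any $0$-form $\tau\colon C_0(B_N)\to\Z_2$, the map $\sigma\mapsto\sigma+d\tau$ is an involution of $\Omega_1(B_N,\Z_2)$ that leaves $d\sigma$, hence $S_N$ and $\mu_{N,\beta}$, unchanged; therefore $\E[f(\sigma)]=\E[f(\sigma+d\tau)]$ for every observable $f$. Taking $f(\sigma)=\rho(\sigma(\gamma_S))$ and using adjointness of $d$ and $\partial$ gives $f(\sigma+d\tau)=\rho(\sigma(\gamma_S))\,\rho(\langle\partial\gamma_S,\tau\rangle)$, so $\E[f]=\rho(\langle\partial\gamma_S,\tau\rangle)\,\E[f]$. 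If $\{e_i\}_{i\in S}$ is not a loop, pick a vertex $v$ with $(\partial\gamma_S)[v]$ odd and set $\tau=\mathbbm{1}_{\{v\}}$; then $\rho(\langle\partial\gamma_S,\tau\rangle)=-1$, whence $\E[f]=-\E[f]=0$.

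For the converse, suppose $\{e_i\}_{i\in S}$ forms a loop. Reducing $\gamma_S$ modulo $2$ yields a $\Z_2$-cycle $\gamma_S'$ with $\rho(\sigma(\gamma_S))=\rho(\sigma(\gamma_S'))$, and since the box $B_N$ is contractible, $\gamma_S'=\partial R$ for some $R\subseteq C_2(B_N)^+$. I would then run the high-temperature expansion: writing $e^{\beta\rho(d\sigma(p))}=\cosh\beta\,(1+\tanh\beta\,\rho(d\sigma(p)))$ in both the numerator $\sum_\sigma\rho(\sigma(\gamma_S'))e^{-\beta S_N(\sigma)}$ and the denominator $Z_{\beta,N}$, using $\prod_{p\in R'}\rho(d\sigma(p))=\rho(\sigma(\partial R'))$, and summing over $\sigma$ (which annihilates every term except those indexed by $R'$ with $\partial R'\equiv\gamma_S'$, resp. $\partial R'\equiv0$), one obtains
\[
\E_{N,\beta}\Big[\prod_{i\in S}\sigma(e_i)\Big]=\frac{\sum_{R'\colon\,\partial R'=\gamma_S'}(\tanh\beta)^{|\supp (R')^+|}}{\sum_{R'\colon\,\partial R'=0}(\tanh\beta)^{|\supp (R')^+|}}>0,
\]
since for $\beta>0$ the denominator contains the strictly positive term $R'=\varnothing$ while the numerator is a nonempty sum of strictly positive terms (it contains $R$). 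Combining this with the previous paragraph and multiplying over the blocks of $\mathfrak P$ completes the proof.

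The main obstacle is exactly this converse: the Ginibre/Griffiths inequalities (e.g. \cite{Forsstr_m_2023}) only yield $\E[\prod_{i\in S}\sigma(e_i)]\ge0$, and one must rule out the value $0$, for which I know no shortcut cleaner than the surface expansion above; it also relies on the standard (but worth stating) topological input that every $\Z_2$-$1$-cycle in $B_N$ bounds a $2$-chain in $B_N$. Everything else is routine once ``forms a loop'' is read as the mod-$2$ even-incidence condition on $\gamma_S$.
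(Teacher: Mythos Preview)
Your proof is correct and, for the ``not a loop $\Rightarrow$ zero'' direction, essentially coincides with the paper's: both invoke gauge invariance at a boundary vertex, the paper by name (Elitzur's theorem), you by writing out the involution $\sigma\mapsto\sigma+d\tau$ explicitly. The difference lies in the converse. The paper does not argue positivity at all but simply cites \cite[Proposition~6.2]{currentexpansion} for $\E[W_{\gamma_P}]>0$; you instead run the high-temperature (random-surface) expansion and exhibit the ratio as a quotient of nonempty sums of positive terms, using that every $\Z_2$-cycle in $B_N$ bounds. Your route is more self-contained and makes transparent exactly where $\beta>0$ and the contractibility of $B_N$ enter, at the cost of a few extra lines; the paper's route is quicker but outsources the content. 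Your explicit remark that ``forms a loop'' should be read as the mod-$2$ even-incidence condition $\partial\gamma_S\equiv 0$ is also a useful clarification, since the paper's formal definition of loop requires connectedness and $\{0,\pm1\}$-values, which is not quite what is meant here.
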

\begin{proof} 
    Assume first that the formal sum of $\{e_i\}_{i \in P}$ forms a loop for all $P \in \mathfrak{P}$. Then, for any $P \in \mathfrak{P}$, there exists a loop $\gamma_P = \sum_{i \in P} e_i$. Now, for this loop, we have
    \[
    \E\big[\prod_{i \in P} \sigma(e_i)\big]  = \E[W_{\gamma_P}] > 0,
    \]
    by, e.g., \cite[Proposition 6.2]{currentexpansion}. 
    Conversely, the product of edges spins along a non-closed path is not invariant under gauge symmetry. Specifically, $\prod_{i \in P} \sigma(e_i)$ is not invariant under gauge transforms at the endpoints. By Elitzur's Theorem \cite{Elitzur}, its expectation must vanish,
    \[
    \E\big[\prod_{i \in P} \sigma(e_i)\big]  = 0.
    \]
    This concludes the proof.
\end{proof}

 As one can intuitively think, Lemma \ref{lem:Urselledges} implies that most of the summands in \eqref{eq:Ursellonedges} will take the value zero, which will facilitate the proof of Theorem \ref{thm:2}. \newline 
 We now see a second Lemma which will be useful in the proof of Theorem \ref{thm:2}(e).
\begin{Lemma}
    \label{lem:abouttheloopdec}
    Let $\gamma$ be a closed loop in $\Z^2$. Assume that $\gamma$ can be decomposed as formal sum of two different loops $\gamma_1,\gamma_2 \neq \gamma_1',\gamma_2'$:
    \[
    \gamma = \gamma_1+\gamma_2 = \gamma_1'+\gamma_2'.
    \]
    Then $\gamma$ must satisfy $|\gamma| \ge 16$, i.e., $\gamma$ contains at least 16 edges.
\end{Lemma}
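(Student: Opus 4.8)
The plan is to overlay the two decompositions, read off a canonical combinatorial picture — two ``pinch'' vertices joined by four internally disjoint lattice paths — and then prove a sharp edge count for that picture. Write the hypothesis as $\gamma=\gamma_1+\gamma_2=\gamma_1'+\gamma_2'$, where the four loops are simple and each $+$ is an edge-disjoint splitting of $\supp\gamma$. First I would colour every edge $e\in\supp\gamma$ by the pair $(i,j)\in\{1,2\}^2$ with $e\in\supp\gamma_i$ and $e\in\supp\gamma_j'$, getting four classes $E_{11},E_{12},E_{21},E_{22}$ with $\gamma_1=E_{11}\sqcup E_{12}$, $\gamma_2=E_{21}\sqcup E_{22}$, $\gamma_1'=E_{11}\sqcup E_{21}$, $\gamma_2'=E_{12}\sqcup E_{22}$. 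If some $E_{ij}$ were empty, one of $\gamma_1,\gamma_2$ would be contained in one of $\gamma_1',\gamma_2'$; since a simple loop has no proper nonempty sub-loop this forces equality, and then $\{\gamma_1,\gamma_2\}=\{\gamma_1',\gamma_2'\}$, contrary to assumption. Hence all four classes are nonempty.

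Next I would analyse how the classes lie along the simple cycles: on $\gamma_1$ the classes $E_{11}$ and $E_{12}$ occur as arcs with alternating endpoints, and similarly on $\gamma_2,\gamma_1',\gamma_2'$. Tracking these endpoints — after, if necessary, replacing $\gamma$ by one with the fewest edges — one sees that the extremal case is that each $E_{ij}$ is a single arc; the four arcs then share exactly two vertices $u\ne v$, all four lattice edges at $u$ and at $v$ are used, and $\supp\gamma$ is precisely the union of four internally disjoint nearest-neighbour paths $\pi_1,\pi_2,\pi_3,\pi_4$ from $u$ to $v$, so $|\gamma|=\sum_k|\pi_k|$. (If some $E_{ij}$ splits into several arcs, $\gamma$ carries extra degree-$4$ vertices and decomposes into more paths among them, which only enlarges $|\gamma|$ and is handled by the same counting.) So it suffices to show: four internally disjoint paths between two points of $\Z^2$ have total length at least $16$.

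For this I would argue by cases on $d=\|u-v\|_1$, using that each path has length $\ge d$ of the same parity as $d$. If $d\ge 4$ then $\sum_k|\pi_k|\ge 4d\ge 16$. If $d=3$, any length-$3$ path is a geodesic, and inspecting the geodesics between two points at distance $3$ (at most one if $u-v$ is axis-parallel, at most two pairwise internally disjoint ones otherwise) shows at most two of the $\pi_k$ have length $3$; the others have length $\ge 5$, so $\sum\ge 3+3+5+5=16$. If $d\le 2$ the geodesics use only the edges at $u$ and at $v$ pointing toward the opposite endpoint, so at most two $\pi_k$ are geodesics; each remaining $\pi_k$ leaves $u$ along an ``outward'' edge and reaches $v$ along an ``outward'' edge, and having to avoid $u$, $v$ and the other three paths forces its length to be $\ge 6$ when $d=2$ is diagonal (hence $\sum\ge2+2+6+6$), even larger when $d=2$ is axis-parallel, and one obtains $\sum\ge 1+3+3+9=16$ when $d=1$. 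In every case $\sum_k|\pi_k|\ge 16$; equality holds for a unit square together with a $12$-edge loop encircling it, which admits both the decomposition into the square and the outer loop and the decomposition into two $8$-edge loops.

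The overlay argument and the passage to the four-path picture are routine; the real obstacle — and the reason the answer is $16$ rather than the easy $12$ coming from ``three bounded faces, each of area $\ge1$'' — is the last case analysis, which must use that all four paths share \emph{both} endpoints, so that shrinking an inner face down to a single cell is only possible at the cost of making the outer paths long enough to go around the obstruction.
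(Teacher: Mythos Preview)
Your approach is genuinely different from the paper's. The paper fixes one decomposition $\gamma=\gamma_1+\gamma_2$ and carries out a direct, figure-driven case analysis on how $\gamma_2$ ``touches'' $\gamma_1$ (both endpoints of an edge, only corners, or a genuine crossing), with separate sub-cases for $|\gamma_1|\in\{4,6,8\}$. Your overlay/colouring idea is more structural: it isolates the picture of four edge-disjoint $u$--$v$ paths using all four lattice edges at each of $u$ and $v$, after which the split on $d=\|u-v\|_1$ is short and conceptually clean. When it works, this buys a much tidier proof than the paper's enumeration.

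The real gap is the reduction to that picture. You assert that in a minimal $\gamma$ each $E_{ij}$ is a single arc, giving exactly two pinch vertices; but the parenthetical ``more arcs only enlarges $|\gamma|$, handled by the same counting'' is not an argument. What your colouring does show is that every transition point between colour classes has degree $4$ in $\gamma$ with one incident edge of each $E_{ij}$, so the transition set $T$ is common to all four cycles; your analysis is the case $|T|=2$. For $|T|\ge 8$ one gets $|\gamma|=|\gamma_1|+|\gamma_2|\ge 2|T|\ge 16$ for free (each simple cycle visits all of $T$), but $|T|\in\{4,6\}$ is not covered and needs its own treatment --- the four-path bound does not apply as stated. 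A secondary imprecision: even for $|T|=2$, only the pairs $\{E_{11},E_{12}\}$, $\{E_{21},E_{22}\}$, $\{E_{11},E_{21}\}$, $\{E_{12},E_{22}\}$ are forced to be internally vertex-disjoint (each such pair lies on one of the four simple cycles), whereas $E_{11}$ may cross $E_{22}$ and $E_{12}$ may cross $E_{21}$. Your $d\le 2$ case analysis invokes full internal disjointness when bounding the outward paths; the bounds in fact survive under the weaker constraint (the binding restrictions are the forced first and last edges at $u$ and $v$ together with parity), but as written the argument over-claims and should be rephrased accordingly.
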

The proof of Lemma \ref{lem:abouttheloopdec} is referred to Appendix \ref{App:A}. We now have the ingredients to prove Theorem \ref{thm:2}.
\begin{proof}[Proof of Theorem \ref{thm:2}]
(a) Choose $e_1,...,e_n \in E$ without shared boundaries, i.e., $(e_i)_{i \in [n]}$ with $\partial e_i \cap \partial e_j = \emptyset, ~ \text{for} ~ i,j \in [n], i \neq j$. Then, by Lemma \ref{lem:Urselledges}, we have $U_n(\sigma(e_1),...,\sigma(e_n)) = 0$. This completes the proof in this case. \newline
(b) Note that if $n$ is odd, any partition $\mathfrak{P} = \{P_1,...,P_{\mathfrak{|p|}}\} \in \mathfrak{F}_n$ will contain at least one element $P_i \subset [n]$ of odd cardinality. As an odd number of edges cannot form a closed path, Lemma \ref{lem:Urselledges} implies that the Ursell function takes the value zero. In the case $n=2$, the conclusion follows immediately from Lemma \ref{lem:Urselledges}. This concludes the second case.
\newline
    (c) Now assume that $e_1,...,e_n \in E$ form a rectangular loop, which we denote by~$\gamma$. Observe that any strict subset $I \subsetneq [n]$ of the edges breaks the loop, so that by Lemma \ref{lem:Urselledges},
    \[
    \E[\prod_{i \in I} \sigma(e_i)]=0.
    \]
    It follows that in this case, we have
    \[
    U_n(\sigma(e_1),...,\sigma(e_n)) = \E[\prod_{i \in [n]}\sigma(e_n)] = \E[W_\gamma] > 0.
    \]
    This concludes the proof. \newline 
(d)  Now, assume that $m \geq 3$, $n = 10$, and let $\gamma$ be the loop described in Figure \ref{fig:u10negative}. We denote the edges appearing in the formal sum in $\gamma$ by $e_1,...,e_{10}$. Our goal is to show that for certain values of $\beta >0$ one has $U_{10}^{\sigma}(W_\gamma) < 0$.
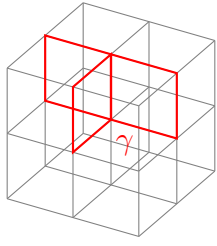
\begin{figure}[ht]
\centering

\begin{tikzpicture}[tdplot_main_coords, scale=1]


  \foreach \x in {0,1} {
    \foreach \y in {0,1,2} {
      \foreach \z in {0,1,2} {
        \draw[gray] (\x,\y,\z) -- (\x+1,\y,\z);
      }
    }
  }

  \foreach \x in {0,1,2} {
    \foreach \y in {0,1} {
      \foreach \z in {0,1,2} {
        \draw[gray] (\x,\y,\z) -- (\x,\y+1,\z);
      }
    }
  }

  \foreach \x in {0,1,2} {
    \foreach \y in {0,1,2} {
      \foreach \z in {0,1} {
        \draw[gray] (\x,\y,\z) -- (\x,\y,\z+1);
      }
    }
  }

  \draw[red, thick] (1,1,1) -- (1,2,1) -- (1,2,2) -- (1,1,2) -- cycle;
  \draw[red, thick] (1,1,1) -- (2,1,1) -- (2,1,2) -- (1,1,2) -- cycle;
  \draw[red, thick] (1,1,1) -- (1,0,1) -- (1,0,2) -- (1,1,2) -- cycle;
  \node[red] at (1.5,1.5,1) {$\gamma$};

\end{tikzpicture}

\caption{The loop $\gamma$, s.t. $U_{10}^{\sigma}(W_\gamma) < 0$ for $\beta$ large.}
\label{fig:u10negative}
\end{figure}
\newline 
    To proceed, we observe that $\gamma$ can be divided into a sum of two loops in exactly three distinct ways (see Figure \ref{fig:Partitonsu10gamma}): in each such sum, one loop has size $4$ and the other loop has size $6$. Specifically we denote these partitions by $\gamma= \gamma_1'+\gamma_2'$, $\gamma= \gamma''_1+\gamma_2''$ and $\gamma = \gamma'''_1+\gamma_2'''$. 
\begin{figure}[ht]
\centering
\begin{minipage}{0.3\textwidth}
\centering
\begin{tikzpicture}[tdplot_main_coords, scale=1]


  \foreach \x in {0,1} {
    \foreach \y in {0,1,2} {
      \foreach \z in {0,1,2} {
        \draw[gray] (\x,\y,\z) -- (\x+1,\y,\z);
      }
    }
  }

  \foreach \x in {0,1,2} {
    \foreach \y in {0,1} {
      \foreach \z in {0,1,2} {
        \draw[gray] (\x,\y,\z) -- (\x,\y+1,\z);
      }
    }
  }

  \foreach \x in {0,1,2} {
    \foreach \y in {0,1,2} {
      \foreach \z in {0,1} {
        \draw[gray] (\x,\y,\z) -- (\x,\y,\z+1);
      }
    }
  }

  \draw[blue, thick] (1,1,1) -- (2,1,1) -- (2,1,2) -- (1,1,2) -- cycle;
  \draw[blue, thick] (1,1,1) -- (1,0,1) -- (1,0,2) -- (1,1,2) -- cycle;
  \draw[red, thick] (1,1,1) -- (1,2,1) -- (1,2,2) -- (1,1,2) -- cycle;
  \node[red] at (1.5,1.5,1) {$\gamma_1'$};
  \node[blue] at (1.5,0.5,1) {$\gamma_2'$};
\end{tikzpicture}
\end{minipage}
\begin{minipage}{0.3\textwidth}
\centering
\begin{tikzpicture}[tdplot_main_coords, scale=1]


  \foreach \x in {0,1} {
    \foreach \y in {0,1,2} {
      \foreach \z in {0,1,2} {
        \draw[gray] (\x,\y,\z) -- (\x+1,\y,\z);
      }
    }
  }

  \foreach \x in {0,1,2} {
    \foreach \y in {0,1} {
      \foreach \z in {0,1,2} {
        \draw[gray] (\x,\y,\z) -- (\x,\y+1,\z);
      }
    }
  }

  \foreach \x in {0,1,2} {
    \foreach \y in {0,1,2} {
      \foreach \z in {0,1} {
        \draw[gray] (\x,\y,\z) -- (\x,\y,\z+1);
      }
    }
  }

  \draw[blue, thick] (1,1,1) -- (1,0,1) -- (1,0,2) -- (1,1,2) -- cycle;
  \draw[blue, thick] (1,1,1) -- (1,2,1) -- (1,2,2) -- (1,1,2) -- cycle;
  \draw[red, thick] (1,1,1) -- (2,1,1) -- (2,1,2) -- (1,1,2) -- cycle;
  \node[red] at (1.5,1.5,1) {$\gamma_1''$};
  \node[blue] at (0,1,1) {$\gamma_2''$};
\end{tikzpicture}
\end{minipage}
\begin{minipage}{0.3\textwidth}
\centering
\begin{tikzpicture}[tdplot_main_coords, scale=1]


  \foreach \x in {0,1} {
    \foreach \y in {0,1,2} {
      \foreach \z in {0,1,2} {
        \draw[gray] (\x,\y,\z) -- (\x+1,\y,\z);
      }
    }
  }

  \foreach \x in {0,1,2} {
    \foreach \y in {0,1} {
      \foreach \z in {0,1,2} {
        \draw[gray] (\x,\y,\z) -- (\x,\y+1,\z);
      }
    }
  }

  \foreach \x in {0,1,2} {
    \foreach \y in {0,1,2} {
      \foreach \z in {0,1} {
        \draw[gray] (\x,\y,\z) -- (\x,\y,\z+1);
      }
    }
  }

  \draw[blue, thick] (1,1,1) -- (2,1,1) -- (2,1,2) -- (1,1,2) -- cycle;
  \draw[blue, thick] (1,1,1) -- (1,2,1) -- (1,2,2) -- (1,1,2) -- cycle;
  \draw[red, thick] (1,1,1) -- (1,0,1) -- (1,0,2) -- (1,1,2) -- cycle;
  \node[red] at (1.5,0.5,1) {$\gamma_1'''$};
  \node[blue] at (1.5,1.5,1) {$\gamma_2'''$};

\end{tikzpicture}
\end{minipage}
\caption{The loop $\gamma$ from Figure \ref{fig:u10negative} written as sums of two loops in three distinct ways.}
\label{fig:Partitonsu10gamma}
\end{figure}
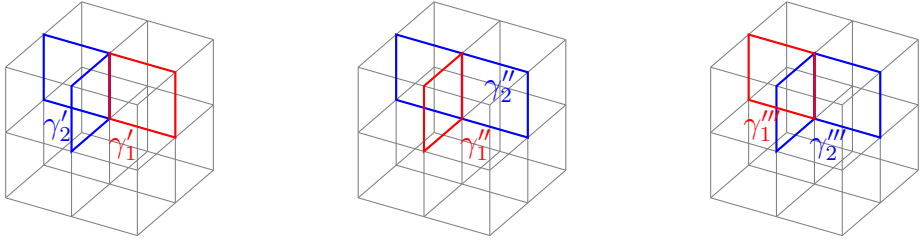
    \newline
    Also, we note that any other partition of $\gamma$ doesn't form closed loops, so that by \eqref{eq:Ursellonedges} and Lemma \ref{lem:Urselledges} we have,
    \begin{equation}
    \label{eq:Ursellinpfthm11} 
    U_{10}^{\sigma}(W_\gamma)=\E_\beta[W_{\gamma}]-\E_\beta[W_{\gamma_1'}]\E_\beta[W_{\gamma_2'}]-\E_\beta[W_{\gamma_1''}]\E_\beta[W_{\gamma_2''}]-\E_\beta[W_{\gamma_1'''}]\E_\beta[W_{\gamma_2'''}].
    \end{equation}
 Now, by Lemma \ref{lem:limitbeta} applied to every expectation in \eqref{eq:Ursellinpfthm11}, if one let $\beta$ increase then every expectation  approaches $1$ in \eqref{eq:Ursellinpfthm11} which leads to
 \begin{equation}
 \label{eq:U10negative}
 U_{10}(\sigma(e_1),...,\sigma(e_{10})) < 0.
 \end{equation}
This proves the claim in the case $n=10$ and $m \geq 3$. \newline 
    Now assume that $n=12$. In this case, one can take the loop $\gamma$ from Figure \ref{fig:u10negative} and add some edges to make one of the loops in the decomposition of $\gamma$ larger, as shown, for example, in Figure \ref{fig:u12neg}. In that case, the new loop called $\tilde{\gamma}$ composed of 12 edges can also be divided into a sum of two loops in exactly three distinct ways: one as a sum of two loops of size $6$ and $6$, and two sums with loops of size $8$ and $4$. The same argumentation as for $\gamma$ above applies to obtain that for large values $\beta >0$, $U_{12}^{\sigma} (W_\gamma) < 0$.
    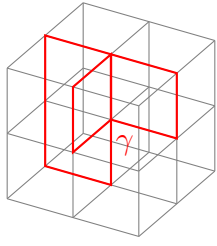
\begin{figure}[ht]
\centering
\begin{tikzpicture}[tdplot_main_coords, scale=1]


  \foreach \x in {0,1} {
    \foreach \y in {0,1,2} {
      \foreach \z in {0,1,2} {
        \draw[gray] (\x,\y,\z) -- (\x+1,\y,\z);
      }
    }
  }

  \foreach \x in {0,1,2} {
    \foreach \y in {0,1} {
      \foreach \z in {0,1,2} {
        \draw[gray] (\x,\y,\z) -- (\x,\y+1,\z);
      }
    }
  }

  \foreach \x in {0,1,2} {
    \foreach \y in {0,1,2} {
      \foreach \z in {0,1} {
        \draw[gray] (\x,\y,\z) -- (\x,\y,\z+1);
      }
    }
  }

  \draw[red, thick] (1,1,1) -- (2,1,1) -- (2,1,2) -- (1,1,2) -- cycle;
  \draw[red, thick] (1,1,1) -- (1,1,0) -- (1,0,0) -- (1,0,1) --(1,0,2) -- (1,1,2) -- cycle;
  \draw[red, thick] (1,1,1) -- (1,2,1) -- (1,2,2) -- (1,1,2) -- cycle;
  \node[red] at (1.5,1.5,1) {$\gamma$};

\end{tikzpicture}
\caption{Example of a loop $\tilde{\gamma}$, s.t. $|\tilde{\gamma}|=12$ and $U_{12}^{\sigma}(W_{\tilde{\gamma}}) < 0$ at large $\beta$.}
\label{fig:u12neg}
\end{figure}
    \newline 
For the general case, this argumentation applies to every $n=2k$, when $k \geq 6$, as one can always increase the size of the rectangular loop in $\tilde{\gamma}$ by adding two new edges. This proves that for $m \geq 3$ if $n \geq 10$ is even, then the Ursell function applied to edges can take negative values when $\beta$ is large enough. \newline 
    It now remains to show that the Ursell function applied on edges can't take negative values if $n=4,6,8$. 
    For $n=4,6$, we note that it is not possible to divide a loop of length $n$ into two closed paths, as a closed path must have a size of at least 4 edges. For $n=8$, if a set of $8$ edges can be partitioned into two loops, then each loop has to contain $4$ edges. It follows immediately that the two loops are either disjoint or share a "corner". In either case it implies
    \[ 
    U_8(\sigma(e_1),...,\sigma(e_8))=\E[W_{\gamma_1+\gamma_2}]-\E[W_{\gamma_1}]\E[W_{\gamma_2}] \geq 0,
    \]
    by Griffith's second inequality. This concludes the proof of (d). \newline 
  (e)  
    Now, assume that $m=2$. If $e_1,...,e_n$ are edges which form a loop $\hat{\gamma}$ that can only be decomposed in one specific partition $\hat{\gamma}=\hat{\gamma}_1+\hat{\gamma}_2$, then one obtains due to Griffith's inequality that,
    \[
    U_n^\sigma(W_{\hat{\gamma}}) = \E[W_{\hat{\gamma}}]-\E[W_{\hat{\gamma}_1}]\E[W_{\hat{\gamma}_2}] \geq 0.
    \]
Hence, one searches for a decomposition of $\gamma$ in at least two distinct loops $\gamma_1,\gamma_2 \neq \gamma_1',\gamma_2'$, s.t.,
    \[
    \gamma = \gamma_1+\gamma_2 = \gamma_1'+\gamma_2'.
    \]
    By Lemma \ref{lem:abouttheloopdec}, the above decomposition implies that $|\gamma| \geq 16$.
    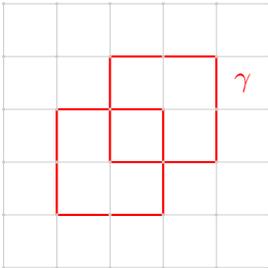
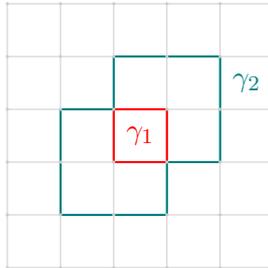
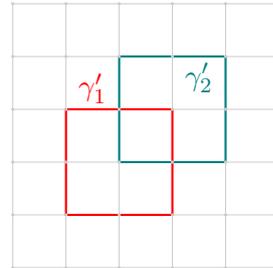
\begin{figure}[ht]
    \centering
    \begin{subfigure}[b]{0.32\textwidth}
\begin{tikzpicture}[scale=0.7, every node/.style={font=\small}]
  \draw[step=1cm, very thin, gray!40] (0,0) grid (5,5);

\draw[thick, red] (2,2) rectangle (3,3);
\draw[thick, red] (2,3) -- (1,3);
\draw[thick, red] (1,3) -- (1,2);
\draw[thick, red] (1,2) -- (1,1);
\draw[thick, red] (1,1) -- (3,1);
\draw[thick, red] (3,1) -- (3,2);
\draw[thick, red] (3,2) -- (4,2);
\draw[thick, red] (4,2) -- (4,4);
\draw[thick, red] (4,4) -- (2,4);
\draw[thick, red] (2,4) -- (2,3);
\node[red] at (4.5,3.5) {$\gamma$};

  \foreach \x in {0,...,5}
    \foreach \y in {0,...,5}
      \fill[gray!40] (\x,\y) circle (0.03);

\end{tikzpicture}
    \caption{Configuration $\gamma$.}
    \label{fig:PfFig5a}
    \end{subfigure}
\hfill
\begin{subfigure}[b]{0.32\textwidth}
    \centering

\begin{tikzpicture}[scale=0.7, every node/.style={font=\small}]
  \draw[step=1cm, very thin, gray!40] (0,0) grid (5,5);

\draw[thick, red] (2,2) rectangle (3,3);
\draw[thick, teal] (2,3) -- (1,3);
\draw[thick, teal] (1,3) -- (1,2);
\draw[thick, teal] (1,2) -- (1,1);
\draw[thick, teal] (1,1) -- (3,1);
\draw[thick, teal] (3,1) -- (3,2);
\draw[thick, teal] (3,2) -- (4,2);
\draw[thick, teal] (4,2) -- (4,4);
\draw[thick, teal] (4,4) -- (2,4);
\draw[thick, teal] (2,4) -- (2,3);
\node[red] at (2.5,2.5) {$\gamma_1$};
\node[teal] at (4.5,3.5) {$\gamma_2$};

  \foreach \x in {0,...,5}
    \foreach \y in {0,...,5}
      \fill[gray!40] (\x,\y) circle (0.03);

\end{tikzpicture}

    \caption{Partition $\gamma=\gamma_1+\gamma_2$.}
    \label{fig:PfFig5b}
\end{subfigure}
    \begin{subfigure}[b]{0.32\textwidth}
\begin{tikzpicture}[scale=0.7, every node/.style={font=\small}]
  \draw[step=1cm, very thin, gray!40] (0,0) grid (5,5);

\draw[thick, red] (1,1) rectangle (3,3);


\node[red] at (1.5,3.4) {$\gamma_1'$};
\draw[thick, teal] (2,2) rectangle (4,4);
\node[teal] at (3.5,3.6) {$\gamma_2'$};

  \foreach \x in {0,...,5}
    \foreach \y in {0,...,5}
      \fill[gray!40] (\x,\y) circle (0.03);

\end{tikzpicture}
    \caption{Partition $\gamma=\gamma_1'+\gamma_2'$.}
    \label{fig:PfFig5c}
    \end{subfigure}
\caption{Configuration $\gamma$ with $|\gamma|=16$ such that $U_{16}^{\sigma}(W_\gamma) < 0$ at large $\beta$.}
\label{fig:PfFig5}
\end{figure}
\newline 
    If $n=16$, observe the configuration $\gamma$ in Figure \ref{fig:PfFig5}. It has two distinct partitions, and hence, one obtains
    \[
    U_{16}(\sigma(e_1),...,\sigma(e_{16}))=\E_\beta[W_{\gamma}]-\E_\beta[W_{\gamma_1}]\E_\beta[W_{\gamma_2}]-\E_\beta[W_{\gamma_1'}]\E_\beta[W_{\gamma_2'}],
    \]
    which can be made negative for large values of $\beta$
    using  \eqref{eq:limto1whenm2} in the case $\beta \to \infty$. \newline  
    Now, for any $n$ even such that $n \geq 18$, one can have a similar configuration as in Figure \ref{fig:PfFig5} and make, e.g., $\gamma'_1$ larger by adding an even number of edges. This concludes the proof.
\end{proof}
\section{Cluster Expansion on Ursell Functions}
\label{sec:Ursellfct}
We will now focus on the proof of Theorem \ref{thm:1}. In this section, we will apply cluster expansion, as described in Section \ref{subsec:ClusterExpansion}, to the summands of Ursell functions. For this reason, we assume that $\beta \ge \beta_0$ is large enough such that cluster expansion is well-defined, i.e.,  the series in \eqref{eq:expwilsonloop} converges absolutely. From now on, we assume that the dimension of the lattice is any fixed natural number $m \geq 3$ and  $n \in \N$ is the parameter of the Ursell function $U_n$. 
Furthermore, we let $\gamma_1,...,\gamma_n$ be any Wilson loops. We will be exactly specifying the shapes of these loops in Section \ref{sec:Proofmainthm}.
\subsection{Cluster expansion for sums of loops}
\label{subsec:UrsellfctClusterexpintro}
In this section, we will describe how to represent the following term,
\[
\E[\prod_{i \in P} W_{\gamma_i}], ~~ P \subset [n],
\]
using a cluster expansion. First, we denote by $\gamma$ the formal sum of the loops $\gamma_1,...,\gamma_n$, namely
\[
\gamma :=\sum_{i \in [n]} \gamma_i.
\]
For each $i \in [n]$ we denote by $q_i \in C_2(B_N,\Z)$ an oriented surface such that $\partial q_i = \gamma_i$. \newline 
For $P \subset [n]$, we define
\begin{equation}
\label{eq:productofW}
    W_{\gamma_P} := \prod_{i \in P} W_{\gamma_i} ~~ \text{and} ~ W_{\gamma} := \prod_{i \in [n]} W_{\gamma_i}.
\end{equation}
In particular, it implies that 
\begin{equation}
\label{eq:oddnumberofW}
   \{(\gamma_i)_{i \in P} : W_{\gamma_P}  = - 1 \} = \{ (\gamma_i)_{i \in P} : W_{\gamma_i} = -1 ~ \text{for an odd number of} ~ i \in P\}.
\end{equation}
Similarly, one defines the sum of the oriented surfaces $(q_i)_{i \in P}$, $P \subset [n]$, by
\[
q_P := \sum_{i \in P} q_i,
\]
which implies that $q_P \in C_2(B_N,\Z)$. Note that $q_P$ is not an oriented surface by our definition, as its boundary is not a closed loop. By \eqref{eq:sumtwochains}, one has
\begin{equation}
\label{eq:againanewequationn}
    \nu(q_P) = \sum_{p \in q_P} \nu(p) = \sum_{i \in P} \sum_{p \in q_i} \nu(p) = \sum_{i \in P} \nu(q_i).
\end{equation}
In particular, this implies that $\rho(\nu(q_P)) = \prod_{i \in P} \rho(\nu(q_i))$. Here, similarly as in Section \ref{subsec:ClusterExpansion}, we say that $\mathcal{V}$ interacts with $q_P$ if $\mathcal{V}(q_P)=1$. 
From \eqref{eq:againanewequationn}, one deduces that
\begin{equation}
    \label{eq:againanewequationn1}
    \mathcal{V}(q_P)=\sum_{\nu \in \mathcal{V}} \nu(q_P) = \sum_{\nu \in \mathcal{V}} \sum_{i \in P} \nu(q_i) = \sum_{i \in P} \mathcal{V}(q_i).
\end{equation}
We now prove a similar result to \eqref{eq:expwilsonloop} for the product of the Wilson observables. 
\begin{Lemma}
\label{lem:thefirstonein4}
    Consider Ising lattice gauge theory on $\Z^m$ with parameter \mbox{$\beta \geq \beta_0$}. Then for any  loops $(\gamma_i)_{i \in P}$, $P \subset [n]$, we have
    \[
    -\mylog(\E[W_{\gamma_P}])= \sum_{\mathcal{V} \in \Xi} 2\Psi_{\beta} (\mathcal{V})\mathcal{V}(q_P).
    \]
\end{Lemma}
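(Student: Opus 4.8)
The plan is to reduce the statement to the single-loop identity \eqref{eq:expwilsonloop} by observing that, as a function of $\sigma$, the product $W_{\gamma_P}$ depends only on the $2$-chain $q_P$, in exactly the same way that an ordinary Wilson observable depends only on a surface bounding its loop.

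First I would rewrite $W_{\gamma_P}$ using Stokes' theorem. For each $i\in P$ we have $\partial q_i=\gamma_i$, so the discrete Stokes theorem \eqref{eq:Stokesthm} gives $\sum_{e\in\gamma_i}\sigma(e)=\sum_{p\in q_i}d\sigma(p)$ as elements of $\Z_2$; hence by \eqref{eq:defwilsonloopobservable} and the notation \eqref{eq:sumtwochains}, $W_{\gamma_i}(\sigma)=\rho\bigl(d\sigma(q_i)\bigr)$. Taking the product over $i\in P$, using that $\rho$ is a homomorphism and that $\sum_{i\in P}d\sigma(q_i)=d\sigma(q_P)$ by \eqref{eq:againanewequationn} (with $\nu=d\sigma$), we obtain
\[
W_{\gamma_P}(\sigma)=\prod_{i\in P}\rho\bigl(d\sigma(q_i)\bigr)=\rho\bigl(d\sigma(q_P)\bigr).
\]
Consequently $\E[W_{\gamma_P}]=\E\bigl[\rho(d\sigma(q_P))\bigr]$, which is precisely the left-hand quantity treated in the derivation of \eqref{eq:expwilsonloop}, except that the oriented surface $q$ there is replaced here by the $2$-chain $q_P\in C_2(B_N,\Z)$.

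Next I would check that this replacement is harmless, i.e. that the proof of \cite[Proposition 3.5]{firstpaper} uses nothing about $q$ beyond $q\in C_2(B_N,\Z)$. Indeed, by \eqref{eq:partfct1} and the fiber-counting behind \eqref{eq:newpartfct1} (the number of $\sigma$ with $d\sigma=\nu$ is independent of $\nu$ by Poincar\'e's lemma), one has $\E[\rho(d\sigma(q_P))]=\bigl(\sum_{\nu\in\Omega_2^0(B_N,\Z_2)}\phi_\beta(\nu)\rho(\nu(q_P))\bigr)/\tilde Z_{\beta,N}$. Decomposing each closed $2$-form into its vortex components makes both $\phi_\beta$ and $\nu\mapsto\rho(\nu(q_P))$ factorize over these components, so the Mayer (cluster) expansion of \cite[Lemma 3.3]{firstpaper} applies with each polymer $\nu$ carrying the additional sign $\rho(\nu(q_P))$; absolute convergence is unaffected since $|\rho|=1$ and $\beta\geq\beta_0$. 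This produces the cluster weight $\Psi_\beta(\mathcal{V})\rho(\mathcal{V}(q_P))$ for $\mathcal{V}\in\Xi$, and subtracting \eqref{eq:partfct3} yields
\[
-\mylog\bigl(\E[W_{\gamma_P}]\bigr)=\sum_{\mathcal{V}\in\Xi}\Psi_\beta(\mathcal{V})\bigl(1-\rho(\mathcal{V}(q_P))\bigr);
\]
in particular $\E[W_{\gamma_P}]>0$, so the logarithm is well defined. Since the gauge group is $\Z_2$ and $\rho$ is its nontrivial character, $1-\rho(a)=2a$ for $a\in\Z_2\cong\{0,1\}$, and as $\mathcal{V}(q_P)\in\Z_2$ this gives $1-\rho(\mathcal{V}(q_P))=2\,\mathcal{V}(q_P)$, which is the claimed formula.

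I do not expect a genuine obstacle: the only delicate point is the bookkeeping that $q_P$, although not an oriented surface in the sense of Section~\ref{subsec:DEC} (its boundary $\partial q_P=\sum_{i\in P}\gamma_i$ is in general a disjoint union of loops rather than a single loop), is nonetheless an admissible input to the cluster expansion, because the expansion runs over the closed $2$-forms $\nu$ and only the $\Z_2$-pairing $\nu\mapsto\nu(q_P)$ ever enters. Alternatively, one may invoke \cite[Proposition 3.5]{firstpaper} as a black box after noting $W_{\gamma_P}=\rho\bigl(\sigma(\textstyle\sum_{i\in P}\gamma_i)\bigr)$.
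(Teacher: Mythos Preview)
Your proposal is correct and follows essentially the same approach as the paper: both rewrite $W_{\gamma_P}(\sigma)=\rho(d\sigma(q_P))$ via Stokes, pass to closed $2$-forms using Poincar\'e's lemma and \eqref{eq:newpartfct1}, rerun the cluster expansion of \cite[Lemma~3.3]{firstpaper} with the modified weight $\phi_\beta(\nu)\rho(\nu(q_P))$, and then simplify $1-\rho(\mathcal{V}(q_P))=2\mathcal{V}(q_P)$ using $G=\Z_2$. Your explicit remark that $q_P$ need only be a $2$-chain (not an oriented surface) for the expansion to go through is a useful clarification the paper leaves implicit.
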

\begin{proof}
Assume that $B_N$ is large enough such that it contains all loops $(\gamma_i)_{i \in P}$ and their respective oriented surfaces. By definition and equation \eqref{eq:oddnumberofW} one has,
    \[
    \E_{N,\beta}[W_{\gamma_P}]= \sum_{\sigma \in \Omega_1(B_N,\Z_2)} \prod_{i \in P} \rho(\sigma(\gamma_i)) \mu_{N,\beta}(\sigma). 
    \]
    Let $\sigma \in \Omega_1(B_N,\Z_2)$ and let $\nu = d\sigma$, then by Stokes' theorem \eqref{eq:Stokesthm}, one has the following for all $i \in P$
    \[
    \rho(\sigma(\gamma_i))= \rho(\nu(q_i)).
    \]
    Using \eqref{eq:againanewequationn} one deduces the following
    \[
    \E[W_{\gamma_P}] = \sum_{\nu \in \Omega_2^0(B_N,\Z_2)} \mu_{N,\beta} (\{\sigma : d\sigma= \nu\})\rho(\nu(q_P)).
    \]
    We apply \eqref{eq:newpartfct1} to this equation, and obtain
    \begin{equation}
    \label{lem:eqinlemma41thesecondone}
    \E[W_{\gamma_P}] = \frac{1}{\tilde{Z}_{\beta,N}} \sum_{\nu \in \Omega_2^0(B_N,\Z_2)} \phi_\beta(\nu)\rho(\nu(q_P)).
    \end{equation}
    Now, from \cite[Lemma 3.3]{firstpaper}, which establish \eqref{eq:partfct3}, it follows that
    \[
    \mylog(\tilde{Z}_{\beta,N}) = \mylog \sum_{\nu \in \Omega_2^0(B_N,\Z_2)} \phi_\beta(\nu) = \sum_{\mathcal{V}\in \Xi} \Psi_\beta(\mathcal{V}).
    \]
     Now, if  one replaces the weight $\phi_\beta(\nu)$ by $\phi_\beta(\nu)\rho(\nu(q_P))$ in the proof of \cite[Lemma~3.3]{firstpaper}, then one obtains,
\begin{equation}
\label{eq:eqinlemma41thesecondone2}
    \sum_{\mathcal{V} \in \Xi} \Psi(\mathcal{V})\rho(\mathcal{V}(q_P)) = \mylog \sum_{\nu \in \Omega^0_2(B_N,\Z_2)}  \phi_\beta(\nu)\rho(\nu(q_P)).
\end{equation}
Combining the above, one obtains
\begin{align*}
\mylog(\E[W_{\gamma_P}]) &= \mylog(\tilde{Z}_{\beta,N}^{-1}) + \mylog \sum_{\nu \in \Omega^0_2(B_N,\Z_2)}  \phi_\beta(\nu)\rho(\nu(q_P)) \\  =& - \sum_{\mathcal{V} \in \Xi} \Psi_{\beta} (\mathcal{V})(1-\rho\big(\mathcal{V}(q_P))\big).
\end{align*}
Finally, the claim follows directly from the last equality in \eqref{eq:expwilsonloop}, as the gauge group is $\Z_2$.
\end{proof}
We can now derive an initial expression for the Ursell function using cluster expansion.
\begin{Lemma}
\label{lem:neverending}
    Consider Ising lattice gauge theory at inverse temperature $\beta \geq \beta_0$. Then, for any Wilson loops $(\gamma_i)_{i \in [n]}$, the Ursell function of the Wilson observables equals
    \begin{align}
    \label{eq:defUn}
    U_n^W(W_\gamma) :=& ~ U_n(W_{\gamma_1},...,W_{\gamma_n}) \nonumber \\  
    = &\sum_{\mathfrak{P} \in \mathfrak{F}_n} (-1)^{|\mathfrak{P}|-1} (|\mathfrak{P}|-1)! \myexp \Big( - \sum_{\mathcal{V} \in \Xi} 2\Psi_{\beta}(\mathcal{V}) \sum_{P \in \mathfrak{P}}  \mathcal{V}(q_P) \Big).
    \end{align}
\end{Lemma}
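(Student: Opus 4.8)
The plan is to substitute the cluster-expansion formula for $\E[W_{\gamma_P}]$ into the definition of the Ursell function and observe that the $\prod_{P \in \mathfrak{P}}$ structure turns into a sum in the exponent. Concretely, I would start from the definition \eqref{eq:Wilson} applied to the random variables $W_{\gamma_1},\dots,W_{\gamma_n}$, which gives
\[
U_n^W(W_\gamma) = \sum_{\mathfrak{P} \in \mathfrak{F}_n} (-1)^{|\mathfrak{P}|-1} (|\mathfrak{P}|-1)! \prod_{P \in \mathfrak{P}} \E\Big[\prod_{i \in P} W_{\gamma_i}\Big] = \sum_{\mathfrak{P} \in \mathfrak{F}_n} (-1)^{|\mathfrak{P}|-1} (|\mathfrak{P}|-1)! \prod_{P \in \mathfrak{P}} \E[W_{\gamma_P}],
\]
using the notation $W_{\gamma_P} = \prod_{i \in P} W_{\gamma_i}$ from \eqref{eq:productofW}.

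Next I would invoke Lemma \ref{lem:thefirstonein4}, which applies since $\beta \geq \beta_0$ guarantees absolute convergence of the cluster expansion: for each block $P$ of the partition,
\[
\E[W_{\gamma_P}] = \myexp\Big(-\sum_{\mathcal{V} \in \Xi} 2\Psi_\beta(\mathcal{V})\mathcal{V}(q_P)\Big).
\]
Substituting this into the product over $P \in \mathfrak{P}$ and using that the exponential of a sum is the product of exponentials, the product $\prod_{P \in \mathfrak{P}}$ becomes
\[
\prod_{P \in \mathfrak{P}} \myexp\Big(-\sum_{\mathcal{V} \in \Xi} 2\Psi_\beta(\mathcal{V})\mathcal{V}(q_P)\Big) = \myexp\Big(-\sum_{P \in \mathfrak{P}}\sum_{\mathcal{V} \in \Xi} 2\Psi_\beta(\mathcal{V})\mathcal{V}(q_P)\Big) = \myexp\Big(-\sum_{\mathcal{V} \in \Xi} 2\Psi_\beta(\mathcal{V})\sum_{P \in \mathfrak{P}}\mathcal{V}(q_P)\Big),
\]
where in the last step I interchange the two finite-or-absolutely-convergent sums in the exponent (the sum over $\mathcal{V} \in \Xi$ converges absolutely by the choice of $\beta$, and the sum over $P \in \mathfrak{P}$ is finite). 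Inserting this back into the partition sum yields exactly \eqref{eq:defUn}.

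The only genuinely delicate point is justifying the interchange of the infinite sum over vortex clusters with the finite sum over blocks $P$, and making sure the resulting series in the exponent is still absolutely convergent — but this is immediate because $\sum_{P \in \mathfrak{P}} \mathcal{V}(q_P) = \mathcal{V}(q_{\cup P}) $ is bounded (it takes values in $\{0,\dots,|\mathfrak{P}|\}$, or one can just bound it by $n$), so absolute convergence of $\sum_{\mathcal{V}} \Psi_\beta(\mathcal{V})\mathcal{V}(q_P)$ for each fixed $P$ (which follows from Lemma \ref{lem:thefirstonein4} and the convergence hypotheses behind \eqref{eq:partfct3}) transfers directly. Everything else is formal bookkeeping: expanding the definition, applying Lemma \ref{lem:thefirstonein4} blockwise, and collecting exponents. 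I do not expect any real obstacle here; the lemma is essentially a restatement of Lemma \ref{lem:thefirstonein4} lifted to the full Ursell sum.
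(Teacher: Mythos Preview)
Your proposal is correct and follows essentially the same route as the paper's proof: start from the definition of $U_n$, rewrite each factor as $\E[W_{\gamma_P}]$, apply Lemma~\ref{lem:thefirstonein4} blockwise, and collapse the product of exponentials into a single exponential. The only slip is the throwaway equality $\sum_{P\in\mathfrak{P}}\mathcal{V}(q_P)=\mathcal{V}(q_{\cup P})$, which is false as a $\Z$-identity (the left side counts how many blocks $P$ have $\mathcal{V}(q_P)=1$, not the $\Z_2$-sum); but you immediately use only the correct bound $\sum_P \mathcal{V}(q_P)\le |\mathfrak{P}|\le n$, so the argument is unaffected.
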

For simplicity we will denote the summands of $U_n^W(W_\gamma)$ by $a_{\mathfrak{P}}$ for $\mathfrak{P} \in \mathfrak{F}_n$, i.e., let
\begin{equation}
    \label{eq:apfromremark}
    a_{\mathfrak{P}} := (-1)^{|\mathfrak{P}|-1} (|\mathfrak{P}|-1)! \myexp \Big( - \sum_{\mathcal{V} \in \Xi} 2\Psi_{\beta}(\mathcal{V}) \sum_{P \in \mathfrak{P}}  \mathcal{V}(q_P) \Big).
\end{equation}
\begin{proof}[Proof of Lemma \ref{lem:neverending}]
    We use the notation from \eqref{eq:productofW} in the definition of the Ursell function in \eqref{eq:Wilson} to obtain
\begin{equation*}
\label{eq:hopefullyoneofothelast}
  U_n^W(W_\gamma)  = \sum_{\mathfrak{P}\in \mathfrak{F}_n} (-1)^{|\mathfrak{P}|-1} (|\mathfrak{P}|-1)! \prod_{P \in \mathfrak{P}} \mathbb{E}\left[W_{\gamma_P}\right].
\end{equation*}
By Lemma \ref{lem:thefirstonein4}, one has
\begin{equation*}
\label{eq:hopefullyoneofothelast2}
-\mylog \prod_{P \in \mathfrak{P}} \mathbb{E}\left[W_{\gamma_P}\right] =  \sum_{P \in \mathfrak{P}} -\mylog\E[W_{\gamma_P}] = \sum_{P \in \mathfrak{P}}\sum_{\mathcal{V} \in \Xi} 2\Psi_\beta(\mathcal{V})\mathcal{V}(q_P).
\end{equation*}
Combining the two above equations gives
\begin{align*}
\label{eq:defUn}
&U_n^W(W_\gamma) = \sum_{\mathfrak{P}\in \mathfrak{F}_n} (-1)^{|\mathfrak{P}|-1} (|\mathfrak{P}|-1)! \myexp \Big(\mylog \prod_{P \in \mathfrak{P}} \mathbb{E}\left[W_{\gamma_P}\right] \Big) =\sum_{\mathfrak{P} \in \mathfrak{F}_n} a_{\mathfrak{P}},
\end{align*}
where $a_\mathfrak{P}$ is represented in \eqref{eq:apfromremark}. This concludes the proof.
\end{proof}
\subsection{Factorization of the Ursell function}
In this section, we will see a way to factorize and represent $U_n^W(W_\gamma)$ in a convenient way for the proof of Theorem \ref{thm:1} in the next section. Notably, for each vortex cluster $\mathcal{V} \in \Xi$, we want to make it clear with which oriented surfaces $(q_i)_{i \in [n]}$ it interacts. For this reason, we introduce the following subsets of $\Xi$. For $I \subset [n]$ define
\begin{equation}
\label{eq:XiI}
\Xi[I] := \{\mathcal{V} \in \Xi : \mathcal{V}(q_i)=  1 \iff i \in I\} \subset \Xi.
\end{equation}
In other words, $\Xi[I]$ is the set of vortex clusters which interact with the oriented surfaces $(q_i)_{i \in I}$. 
\begin{Remark}
\label{rem:XI}
Any $\mathcal{V} \in \Xi$ is in a unique $\Xi[I]$ (where $\mathcal{V} \in \Xi[\emptyset]$ if $\mathcal{V}(q_i)=0$ for all $i \in [n]$). Hence $(\Xi[I])_{I \subset [n]}$ is a partition of $\Xi$.
\end{Remark}
For $I \subset [n]$, define
\begin{equation}
    \label{eq:notation}
		\Psi_\beta[I]:= \sum_{\mathcal{V} \in \Xi[I]} \Psi_\beta (\mathcal{V}).
\end{equation}
Then, for any subset $I \subset [n]$, $\Psi_\beta[I]$ represents the sum of correlated activities of those clusters that interact exactly with the oriented surfaces $q_i$ for $i \in I$. \newline 
For any $I \subset [n]$ and $\mathfrak{P} \in \mathfrak{F}_n$, we also define     
\begin{equation}
        \label{eq:defCIp}
        c_{I,\mathfrak{P}} := \sum_{P \in \mathfrak{P}} \mathbbm{1}\{|P \cap I|~\text{is odd}\}.  
\end{equation}
Before proceeding, let us recall some basic results for $c_{I,\mathfrak{P}}$, which will be useful for understanding later derivations.
\begin{Lemma} 
\label{lem:comesatend}
Consider the term $c_{I,\mathfrak{P}}$ from \eqref{eq:defCIp}, where $I \subset [n]$ and $\mathfrak{P} \in \mathfrak{F}_n$.
Then the following holds. \newline 
(a) For any $\mathfrak{P} \in \mathfrak{F}_n$, we have $c_{\emptyset,\mathfrak{P}}=0$. \newline 
(b) If $\mathfrak{P}=\{[n]\}$ and $I \subset [n]$, we have $
c_{I,\mathfrak{P}} = \mathbbm{1}\{|I| ~ \text{odd}\}$. \newline 
(c) For any $\mathfrak{P} \in \mathfrak{F}_n$ and any $i \in [n]$, we have $c_{\{i\},\mathfrak{P}}=1$.
\newline 
(d) For any $\mathfrak{P} \in \mathfrak{F}_n$ and any $I \subset [n]$, we have $
c_{I,\mathfrak{P}} \in \{0,...,n\}$. \newline 
(e) For any $I=\{i,j\} \subset [n], i \neq j$, then we have 
    \begin{equation*}      
        c_{I, \mathfrak{P}} = 2 \cdot \mathbbm{1}\{i,j ~ \text{are in different partitions elements of } \mathfrak{P}\}.
    \end{equation*}
    In particular if $|I|=2$, then $c_{I,\mathfrak{P}} \neq 0$ if and only if $c_{I,\mathfrak{P}} = 2$. \newline 
(f) Let $\mathfrak{P} \in \mathfrak{F}_n \setminus \{[n]\}$. Then there exists $i \in [n-1]$, such that for $I=\{i,i+1\}$ we have $c_{I,\mathfrak{P}} \neq 0$.
\end{Lemma}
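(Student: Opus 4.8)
The plan is to verify each of the six claims directly from the definition $c_{I,\mathfrak{P}} = \sum_{P \in \mathfrak{P}} \mathbbm{1}\{|P \cap I|~\text{is odd}\}$, using only that $\mathfrak{P}$ is a partition of $[n]$, so its parts are pairwise disjoint and every $k \in [n]$ lies in exactly one part. Parts (a)--(d) are essentially immediate, and I would dispatch them in a sentence or two each; (e) requires a two-case split and (f) a short contradiction argument built on (e).

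For (a): if $I = \emptyset$ then $P \cap I = \emptyset$ for every $P \in \mathfrak{P}$, so $|P \cap I| = 0$ is even and every summand vanishes. For (b): the only part is $P = [n]$, so $|P \cap I| = |I|$ and the single summand is $\mathbbm{1}\{|I|~\text{odd}\}$. For (c): with $I = \{i\}$ we have $|P \cap I| \in \{0,1\}$, and it equals $1$ exactly for the unique part containing $i$; hence exactly one summand is $1$ and $c_{\{i\},\mathfrak{P}} = 1$. For (d): $c_{I,\mathfrak{P}}$ is a sum of $|\mathfrak{P}| \le n$ terms, each in $\{0,1\}$, so it lies in $\{0,1,\dots,n\}$; one may additionally note $c_{I,\mathfrak{P}} \le |I|$, since the distinct parts that contribute each meet $I$ in disjoint nonempty subsets.

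For (e) I would fix $i \neq j$ and let $P_i, P_j \in \mathfrak{P}$ be the unique parts containing $i$ and $j$. If $P_i = P_j$, then that part meets $I = \{i,j\}$ in all of $I$, so $|P_i \cap I| = 2$ is even, while every other part is disjoint from $I$; thus $c_{I,\mathfrak{P}} = 0$. If $P_i \neq P_j$, then $|P_i \cap I| = |P_j \cap I| = 1$ are both odd and all other parts contribute $0$, giving $c_{I,\mathfrak{P}} = 2$. This is precisely $2 \cdot \mathbbm{1}\{i,j~\text{in different parts of}~\mathfrak{P}\}$, and since the only possible values are $0$ and $2$, we get $c_{I,\mathfrak{P}} \neq 0 \iff c_{I,\mathfrak{P}} = 2$. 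For (f), by (e) it is enough to produce $i \in [n-1]$ with $i$ and $i+1$ in different parts. If no such $i$ existed, then $i$ and $i+1$ would lie in a common part for every $i \in [n-1]$; since "belonging to a common part" is the equivalence relation induced by $\mathfrak{P}$, transitivity along the chain $1,2,\dots,n$ forces all of $[n]$ into a single part, i.e. $\mathfrak{P} = \{[n]\}$, contradicting $\mathfrak{P} \in \mathfrak{F}_n \setminus \{[n]\}$. Hence such an $i$ exists and $c_{\{i,i+1\},\mathfrak{P}} = 2 \neq 0$.

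There is no genuine obstacle here: the lemma is pure bookkeeping about parities of intersections with the blocks of a partition. The only mildly non-mechanical step is the transitivity observation in (f) — that "consecutive elements always share a block" propagates along $1 \sim 2 \sim \cdots \sim n$ and collapses the partition — and even that is a one-line remark. Accordingly I would write (a)--(d) tersely and give only the short case analysis for (e) and the contradiction for (f).
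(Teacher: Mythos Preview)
Your proposal is correct and matches the paper's approach: the paper dismisses (a)--(e) as immediate from the definition, just as you do, and proves (f) via (e). The only minor difference is in (f): the paper gives a direct construction (choose a block $P_1$ with $\min P_1 \neq 1$ and set $i+1 := \min P_1$, so that $i$ necessarily lies in a different block), whereas you argue by contradiction using transitivity along the chain $1,2,\dots,n$; both are one-line observations of equal weight.
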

\begin{proof}
    (a), (b), (c), (d), and (e) follow immediately from \eqref{eq:defCIp}. \newline 
    (f)  As $\mathfrak{P} \neq \{[n]\}$, $\mathfrak{P}$ consists of at least two partition elements. 
    Choose $P_1 \in \mathfrak{P}$ such that $\min P_1 \neq 1$. We denote it by $i+1 :=\min ~ P_1$. Then $i = \min P_1 -1$ must be in some other set $P_2 \in \mathfrak{P}$. 
    In particular, by (e) it holds that $c_{\{i,i+1\},\mathfrak{P}} = 2$. This concludes the proof.
\end{proof}
We now derive a representation of $U_n^W(W_\gamma)$ using the above-introduced notation.
\begin{Lemma}
    \label{lem:Leamnewap}
    Consider Ising lattice gauge theory with parameter $\beta \geq \beta_0$, and $(\gamma_i)_{i \in [n]}$ being $n$ Wilson loops. Then, 
        \begin{equation}
        \label{eq:newap}
        U_n^W(W_\gamma) = \sum_{\mathfrak{P} \in \mathfrak{F}_n} (-1)^{|\mathfrak{P}|-1}(|\mathfrak{P}|-1)! \myexp \Big(- 2 \sum_{I \subseteq [n]} c_{I,\mathfrak{P}} \Psi_\beta[I] \Big).
        \end{equation}
    \end{Lemma}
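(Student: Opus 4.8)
The plan is to start from the representation of $U_n^W(W_\gamma)$ established in Lemma~\ref{lem:neverending}, namely
\[
U_n^W(W_\gamma) = \sum_{\mathfrak{P} \in \mathfrak{F}_n} (-1)^{|\mathfrak{P}|-1} (|\mathfrak{P}|-1)! \myexp \Big( - \sum_{\mathcal{V} \in \Xi} 2\Psi_{\beta}(\mathcal{V}) \sum_{P \in \mathfrak{P}}  \mathcal{V}(q_P) \Big),
\]
and to show that, for each fixed partition $\mathfrak{P} \in \mathfrak{F}_n$, the exponent rewrites as
\[
\sum_{\mathcal{V} \in \Xi} \Psi_{\beta}(\mathcal{V}) \sum_{P \in \mathfrak{P}}  \mathcal{V}(q_P) \;=\; \sum_{I \subseteq [n]} c_{I,\mathfrak{P}} \,\Psi_\beta[I].
\]
Since $\beta \ge \beta_0$, all the series involved are absolutely convergent, so rearranging the sum over $\mathcal{V} \in \Xi$ is legitimate.

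The first step is to compute $\sum_{P \in \mathfrak{P}} \mathcal{V}(q_P)$ when $\mathcal{V}$ lies in a fixed block $\Xi[I]$ of the partition of $\Xi$ from Remark~\ref{rem:XI}. By \eqref{eq:againanewequationn1} one has $\mathcal{V}(q_P) = \sum_{i \in P} \mathcal{V}(q_i)$ as an element of $\Z_2$, and by the very definition \eqref{eq:XiI} of $\Xi[I]$, for $\mathcal{V} \in \Xi[I]$ we have $\mathcal{V}(q_i) = 1$ precisely when $i \in I$ and $\mathcal{V}(q_i)=0$ otherwise. Hence $\mathcal{V}(q_P) = |P \cap I| \bmod 2 = \mathbbm{1}\{|P \cap I|\ \text{is odd}\}$, and summing over $P \in \mathfrak{P}$ gives exactly $c_{I,\mathfrak{P}}$ as defined in \eqref{eq:defCIp}. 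Crucially, this value depends only on the block $I$ and not on the individual cluster $\mathcal{V} \in \Xi[I]$.

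The second step is to split the sum over $\mathcal{V} \in \Xi$ according to the partition $(\Xi[I])_{I \subseteq [n]}$ of $\Xi$ (Remark~\ref{rem:XI}):
\[
\sum_{\mathcal{V} \in \Xi} \Psi_{\beta}(\mathcal{V}) \sum_{P \in \mathfrak{P}}  \mathcal{V}(q_P) \;=\; \sum_{I \subseteq [n]} \sum_{\mathcal{V} \in \Xi[I]} \Psi_{\beta}(\mathcal{V})\, c_{I,\mathfrak{P}} \;=\; \sum_{I \subseteq [n]} c_{I,\mathfrak{P}} \sum_{\mathcal{V} \in \Xi[I]} \Psi_{\beta}(\mathcal{V}),
\]
and the inner sum is $\Psi_\beta[I]$ by definition \eqref{eq:notation}. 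Substituting this identity into each summand of the expression from Lemma~\ref{lem:neverending} yields \eqref{eq:newap}.

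I do not expect any substantive obstacle here; the only points requiring a little care are the $\Z_2$-bookkeeping in the first step (so that $\mathcal{V}(q_P)$ is genuinely $\mathbbm{1}\{|P\cap I|\ \text{odd}\}$ and never a larger integer) and invoking absolute convergence, guaranteed by $\beta \ge \beta_0$, to justify reindexing the cluster sum before reinserting it into the exponential.
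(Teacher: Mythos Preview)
Your proposal is correct and follows essentially the same route as the paper's proof: start from Lemma~\ref{lem:neverending}, observe that for $\mathcal{V}\in\Xi[I]$ one has $\sum_{P\in\mathfrak{P}}\mathcal{V}(q_P)=c_{I,\mathfrak{P}}$ via the $\Z_2$-computation $\mathcal{V}(q_P)=\mathbbm{1}\{|P\cap I|\ \text{odd}\}$, and then split the cluster sum along the partition $(\Xi[I])_{I\subseteq[n]}$ to recover $\Psi_\beta[I]$. Your explicit remark about absolute convergence justifying the reindexing is a small addition the paper leaves implicit.
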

\begin{proof}
Using the expression of $U_n$ from Lemma \ref{lem:neverending}, the desired conclusion will follow if the following equality holds for all $\mathfrak{P} \in \mathfrak{F}_n$
\begin{equation}
\label{eq:Lem45toprove}
 2\sum_{\mathcal{V} \in \Xi} \Psi_{\beta}(\mathcal{V}) \sum_{P \in \mathfrak{P}} \mathcal{V}(q_P) =  2 \sum_{I \subseteq [n]} c_{I,\mathfrak{P}} \Psi_\beta[I].
\end{equation}
To this end let $\mathfrak{P} \in \mathfrak{F}_n$.
Note that if $\mathcal{V} \in \Xi[I]$ for some $I \subset [n]$, then
\[
\mathcal{V}(q_i)= \mathbbm{1}\{i \in I \}.
\]
Which implies
\begin{equation}
\label{eq:cooltohavethisone}
    \sum_{P \in \mathfrak{P}} \mathcal{V}(q_P) = \sum_{P \in \mathfrak{P}} \sum_{i \in P} \mathcal{V}(q_i)= \sum_{P \in \mathfrak{P}} \mathbbm{1}\{|P \cap I|~\text{is odd}\}=c_{I,\mathfrak{P}}.
\end{equation}
    Hence
\begin{align*}
\sum_{\mathcal{V}\in\Xi} \Psi_\beta(\mathcal{V}) 
  \sum_{P\in\mathfrak{P}} \mathcal{V}(q_P)
&= \sum_{I\subset[n]} 
     \sum_{\mathcal{V}\in\Xi[I]} 
       \Psi_\beta(\mathcal{V})
       \sum_{P\in\mathfrak{P}} \mathcal{V}(q_P) \\[0.3em]
&= \sum_{I\subset[n]} 
     c_{I,\mathfrak{P}}
     \sum_{\mathcal{V}\in\Xi[I]} \Psi_\beta(\mathcal{V}) \\[0.3em]
&= \sum_{I\subset[n]} 
     c_{I,\mathfrak{P}}\,
     \Psi_\beta[I].
\end{align*}
The first equality comes from the fact that $(\Xi[I])_{I \subset [n]}$ is a partition of $\Xi$, the second from \eqref{eq:cooltohavethisone}, and the last from the definition of $\Psi_\beta[I]$. \newline 
This concludes the proof.
\end{proof}
From Lemma \ref{lem:comesatend}(b)  and Lemma \ref{lem:Leamnewap}, one deduces that 
\begin{equation}
    \label{eq:newsummandfirst}
    a_{\{[n]\}}=\myexp\Big(-2 \sum_{I \subset [n]}\mathbbm{1}\{|I| ~ \text{odd}\} \Psi_\beta[I]\Big).
\end{equation}
We now have the ingredients to factorize $U_n$.
    \begin{Lemma}
    \label{lem:factorizationUn}
Consider Ising lattice gauge theory with parameter $\beta \geq \beta_0$. Then one can factorize $U_n$ as
\begin{equation}
\label{eq:oneofUnsfactorizations}
        U_n^W(W_\gamma) = a_{\{[n]\}} (1 + V_n(W_\gamma)),
        \end{equation}
        where $V_n(W_\gamma)$ is defined by
        \begin{equation}
        \label{eq:defVn}
        V_n(W_\gamma) := \sum_{\substack{\mathfrak{P} \in \mathfrak{F}_n : \\ \mathfrak{P} \neq \{[n]\}}} b_\mathfrak{P}.
        \end{equation}
        Here the coefficients $b_\mathfrak{P}$, for $\mathfrak{P} \in \mathfrak{F}_n, ~ \mathfrak{P} \neq \{[n]\}$, are defined by
        \begin{equation}
\label{eq:defbp}
b_{\mathfrak{P}} := \frac{a_{\mathfrak{P}}}{a_{\{[n]\}}} = (-1)^{|\mathfrak{P}|-1} (|\mathfrak{P}|-1)! \frac{\myexp \Big(- 2 \sum_{I \subseteq [n]} c_{I,\mathfrak{P}} \Psi_\beta[I]\Big)}{\myexp\big( -2 \sum_{I \subset [n]}\mathbbm{1}\{|I| ~ \text{odd}\} \Psi_\beta[I]\Big)}.
\end{equation}
Moreover, the coefficients $(b_{\mathfrak{P}})_{\mathfrak{P} \in \mathfrak{F}_n \setminus \{[n]\}}$ do not depend on the weights of the vortex clusters $\mathcal{V} \in \Xi[\{i\}]$ for any $i \in [n]$.
    \end{Lemma}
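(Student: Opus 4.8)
The plan is to observe that the identity \eqref{eq:oneofUnsfactorizations} is a purely formal manipulation once we have the expression \eqref{eq:newap} from Lemma \ref{lem:Leamnewap} in hand. First I would note that $a_{\{[n]\}} \neq 0$, since by \eqref{eq:newsummandfirst} it is an exponential, and hence I may legitimately divide by it. Writing $U_n^W(W_\gamma) = \sum_{\mathfrak{P} \in \mathfrak{F}_n} a_{\mathfrak{P}}$ from Lemma \ref{lem:neverending} and splitting off the term $\mathfrak{P} = \{[n]\}$, I get $U_n^W(W_\gamma) = a_{\{[n]\}} + \sum_{\mathfrak{P} \neq \{[n]\}} a_{\mathfrak{P}} = a_{\{[n]\}}\big(1 + \sum_{\mathfrak{P} \neq \{[n]\}} a_{\mathfrak{P}}/a_{\{[n]\}}\big)$, which is exactly \eqref{eq:oneofUnsfactorizations} with $b_{\mathfrak{P}}$ as in \eqref{eq:defbp} and $V_n$ as in \eqref{eq:defVn}. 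The explicit formula for $b_{\mathfrak{P}}$ in \eqref{eq:defbp} then follows by substituting the expression for $a_{\mathfrak{P}}$ from \eqref{eq:apfromremark} (rewritten via \eqref{eq:newap}, i.e.\ $a_{\mathfrak{P}} = (-1)^{|\mathfrak{P}|-1}(|\mathfrak{P}|-1)!\,\myexp(-2\sum_{I}c_{I,\mathfrak{P}}\Psi_\beta[I])$) and the expression for $a_{\{[n]\}}$ from \eqref{eq:newsummandfirst}.

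The substantive part of the statement is the final sentence: that each $b_{\mathfrak{P}}$, for $\mathfrak{P} \neq \{[n]\}$, does not depend on the weights $\Psi_\beta(\mathcal{V})$ of the vortex clusters $\mathcal{V} \in \Xi[\{i\}]$ for any singleton $\{i\} \subset [n]$. For this I would use the quotient form of $b_{\mathfrak{P}}$ in \eqref{eq:defbp}: the ratio of the two exponentials equals $\myexp\big(-2\sum_{I \subset [n]}(c_{I,\mathfrak{P}} - \mathbbm{1}\{|I|\ \mathrm{odd}\})\Psi_\beta[I]\big)$. The claim will follow if the coefficient of $\Psi_\beta[\{i\}]$ in this sum vanishes for every singleton $I = \{i\}$, i.e.\ if $c_{\{i\},\mathfrak{P}} = \mathbbm{1}\{|\{i\}|\ \mathrm{odd}\} = 1$ for all $i \in [n]$ and all $\mathfrak{P}$. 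But this is precisely Lemma \ref{lem:comesatend}(c), which asserts $c_{\{i\},\mathfrak{P}} = 1$ for any $\mathfrak{P} \in \mathfrak{F}_n$ and any $i \in [n]$. Hence the singleton terms cancel in numerator and denominator, and $b_{\mathfrak{P}}$ depends only on the $\Psi_\beta[I]$ with $|I| \neq 1$; in particular it does not depend on the weights of clusters in $\Xi[\{i\}]$, because each such cluster contributes only to $\Psi_\beta[\{i\}]$ (the sets $\Xi[I]$ partition $\Xi$ by Remark \ref{rem:XI}).

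I do not anticipate a serious obstacle here; the only point requiring a little care is making the dependency statement precise — one must use Remark \ref{rem:XI} to argue that changing the weight of a single cluster $\mathcal{V} \in \Xi[\{i\}]$ affects $\Psi_\beta[\{i\}]$ and no other $\Psi_\beta[I]$, so that the cancellation of the $\Psi_\beta[\{i\}]$ coefficient in the exponent genuinely removes all dependence on such weights from $b_{\mathfrak{P}}$. The prefactor $(-1)^{|\mathfrak{P}|-1}(|\mathfrak{P}|-1)!$ is a combinatorial constant independent of all cluster weights, so it poses no issue. This is the fact that will be exploited in Section \ref{sec:Proofmainthm}: one can choose the loops so that the clusters in $\Xi[\{i\}]$ (those interacting with exactly one surface) dominate the exponent of $a_{\{[n]\}}$, driving $U_n^W(W_\gamma)$ toward the sign of $a_{\{[n]\}} = \myexp(\cdots) > 0$ while keeping the correction $V_n(W_\gamma)$ small.
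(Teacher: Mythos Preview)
Your proposal is correct and follows essentially the same route as the paper's proof: the factorization \eqref{eq:oneofUnsfactorizations} is immediate from the definitions, and the independence of $b_{\mathfrak{P}}$ from the singleton weights is obtained by invoking Lemma \ref{lem:comesatend}(c) to see that the $\Psi_\beta[\{i\}]$ contributions cancel in the quotient $a_{\mathfrak{P}}/a_{\{[n]\}}$. Your additional remarks (nonvanishing of $a_{\{[n]\}}$, the appeal to Remark \ref{rem:XI}) make explicit points the paper leaves implicit, but the argument is the same.
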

\begin{proof}
    By definition of $V_n(W_\gamma)$ and its coefficients $(b_{\mathfrak{P}})_{\mathfrak{P} \in \mathfrak{F}_{n \setminus \{[n]\}}}$, it is clear that \eqref{eq:oneofUnsfactorizations} holds. \newline 
    Now, let $\mathfrak{P} \in \mathfrak{F}_{n}$ and $i \in [n]$. By Lemma \ref{lem:Leamnewap}, we have
    \[
    a_\mathfrak{P} = (-1)^{|\mathfrak{P}|-1}(|\mathfrak{P}|-1)! \myexp \Big(- 2 \Big(\sum_{\substack{I \subseteq [n] : \\ |I| \ge 2}} c_{I,\mathfrak{P}} \Psi_\beta[I] + \sum_{i \in [n]} c_{\{i\},\mathfrak{P}}\Psi_\beta[\{i\}]\Big) \Big).
    \]
    Now, by Lemma \ref{lem:comesatend}(c) the coefficient $c_{\{i\}, \mathfrak{P}}$ equals $1$ and does not depend on the choice of $\mathfrak{P} \in \mathfrak{F}_n$. 
    This implies that $b_\mathfrak{P}$ does not depend on $\Psi_\beta[\{i\}]$, as the term gets canceled in the division. That concludes the proof.
\end{proof}
One can now take advantage of the factorization to derive the sign of $U_n$. 
Using the definition of $V_n(W_\gamma)$, depending on whether $|\mathfrak{P}|$ is odd or even, one can divide $V_n(W_\gamma)$ into a positive and a negative part. Simply note that for $\mathfrak{P} \in \mathfrak{F}_n \setminus \{[n]\}$
\[
\begin{cases}
    |\mathfrak{P}|~ \text{odd implies} ~ b_{\mathfrak{P}} \ge 0, \\
    |\mathfrak{P}|~ \text{even implies} ~ b_{\mathfrak{P}} \le 0.
\end{cases}
\]
One deduces 
    \begin{equation}
    \label{eq:Vnpositivenegative}
    V_n(W_\gamma)= \sum_{\substack{\mathfrak{P} \in \mathfrak{F}_n \setminus \{[n]\}\\ |\mathfrak{P}| ~ \text{odd}}} b_\mathfrak{P} + \sum_{\substack{\mathfrak{P} \in \mathfrak{F}_n \\ |\mathfrak{P}| ~ \text{even}}} b_\mathfrak{P} =: V^+_n(W_\gamma) + V^-_n(W_\gamma), 
    \end{equation} 
    where $V_n^{+}(W_\gamma) \geq 0$ consists of positive summands  while  $V_n^{-}(W_\gamma) \leq 0$ consists of negative summands. By Lemma \ref{lem:factorizationUn} it follows clearly that,
    \begin{equation}
    \label{eq:decompv}
        U_{n}^W(W_\gamma)=a_{\{[n]\}}\big(1+V_n^+(W_\gamma)+V_n^-(W_\gamma)\big).
    \end{equation}

Equation \eqref{eq:decompv} will prove to be central for understanding the sign of $U_n$ in the last section. \newline 
Lemma \ref{lem:factorizationUn} gives a convenient expression in the next corollary for $V_n^-(W_\gamma)$ that will be used in the final proof.
\begin{Corollary}
\label{cor:firstcorollary}
    Under the same assumptions as in Lemma \ref{lem:factorizationUn}, it holds that
    \begin{equation*}
    V_n^-(W_\gamma)= \sum_{\substack{\mathfrak{P} \in \mathfrak{F}_n  \\ |\mathfrak{P}| ~ \text{even}}} b_\mathfrak{P} = - \sum_{\substack{\mathfrak{P} \in \mathfrak{F}_n  \\ |\mathfrak{P}| ~ \text{even}}} (\mathfrak{P}-1)! \myexp\Big(-2 \sum_{I \subset [n]} (c_{I,\mathfrak{P}} - \mathbbm{1}\{|I| ~ \text{odd}\}) \Psi_\beta[I] \Big).
    \end{equation*}
\end{Corollary}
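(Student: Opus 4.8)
The plan is to obtain the identity by substituting the explicit formula for the coefficients $b_\mathfrak{P}$ from \eqref{eq:defbp} directly into the definition of $V_n^-(W_\gamma)$ recorded in \eqref{eq:Vnpositivenegative}. First I would observe that the index set of $V_n^-(W_\gamma)$ is exactly the set of partitions $\mathfrak{P} \in \mathfrak{F}_n$ with $|\mathfrak{P}|$ even; since $|\{[n]\}| = 1$ is odd, the side condition $\mathfrak{P} \neq \{[n]\}$ is vacuous on this index set, in agreement with the way the sum is written in the corollary. On every such $\mathfrak{P}$ the sign prefactor $(-1)^{|\mathfrak{P}|-1}$ appearing in \eqref{eq:defbp} equals $-1$, which is precisely what produces the global minus sign in the claimed formula, while the combinatorial factor $(|\mathfrak{P}|-1)!$ is carried through unchanged.

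Next I would simplify the ratio of exponentials in \eqref{eq:defbp}. Both the numerator $\myexp\bigl(-2\sum_{I \subseteq [n]} c_{I,\mathfrak{P}}\Psi_\beta[I]\bigr)$ and the denominator $\myexp\bigl(-2\sum_{I \subseteq [n]} \mathbbm{1}\{|I|\text{ odd}\}\Psi_\beta[I]\bigr)$ are exponentials of finite sums indexed by the same family of subsets $I \subseteq [n]$ (the sums are finite, and by Remark \ref{rem:XI} the quantities $\Psi_\beta[I]$ are simply the pieces into which the correlated activity is partitioned). Hence the elementary identity $\myexp(x)/\myexp(y) = \myexp(x-y)$ gives that the ratio equals $\myexp\bigl(-2\sum_{I \subseteq [n]} (c_{I,\mathfrak{P}} - \mathbbm{1}\{|I|\text{ odd}\})\Psi_\beta[I]\bigr)$. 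Plugging this back into $\sum_{|\mathfrak{P}|\text{ even}} b_\mathfrak{P}$ yields exactly the right-hand side of the corollary.

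I do not expect any genuine obstacle here: the statement is a direct algebraic rewriting of the formula for $b_\mathfrak{P}$ already established in Lemma \ref{lem:factorizationUn}, and the only things to check are the parity of $(-1)^{|\mathfrak{P}|-1}$ on even partitions and the multiplicativity of $\myexp$. (For consistency one should read $(|\mathfrak{P}|-1)!$ in place of the misprinted $(\mathfrak{P}-1)!$ in the displayed formula.) The genuinely substantive work — extracting the sign of $U_n$ from the decomposition \eqref{eq:decompv} and from this expression for $V_n^-(W_\gamma)$ — is then deferred to the final section.
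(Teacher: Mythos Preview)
Your proposal is correct and matches the paper's own approach: the paper's proof is simply the one-line remark that the equality follows directly from Lemma \ref{lem:factorizationUn}, and what you have written is exactly the unpacking of that remark (substitute \eqref{eq:defbp}, use that $(-1)^{|\mathfrak{P}|-1}=-1$ for even $|\mathfrak{P}|$, and combine the two exponentials). Your observation about the misprint $(\mathfrak{P}-1)!$ versus $(|\mathfrak{P}|-1)!$ is also correct.
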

\begin{proof}
The equality follows directly from Lemma \ref{lem:factorizationUn}.
\end{proof}
\section{Proof of Theorem \ref{thm:1}}
\label{sec:Proofmainthm}
In this section, we will describe a configuration of Wilson loops $(\gamma_i)_{i \in [n]}$ and use those to prove Theorem \ref{thm:1}, that is, at a temperature low enough, the Ursell function of the Wilson loops observable $U_n^W(W_\gamma)$ is positive. 
Our goal is first to obtain leading-order estimates on cluster activities $\Psi_\beta[I]$ for $I \subset [n]$. This will be done in the next three subsections. The last subsection will then collect all previous results to prove Theorem \ref{thm:1}. 
\subsection{Smallest vortex clusters}
\label{subsec:Clusters}
In order to obtain information on the first-order terms of $\Psi_\beta(\mathcal{V})$, $\mathcal{V} \in \Xi$, we first find the vortices $\nu \in \Lambda(B_N)$ with the smallest possible support. 
\begin{Lemma}
\label{lem:sizesupportvortex}
    Let $\nu \in \Lambda(B_N)$ be a vortex in $\Z^m, m \geq 3$ such that the support of $\nu$ doesn't contain any boundary plaquettes of $B_N$. Then, either
    \[
    |\supp~ \nu^+| \in \{2(m-1),4(m-1)-2\} ~~ \text{or} ~~ |\supp ~ \nu^+| \geq 4(m-1).
    \]
\end{Lemma}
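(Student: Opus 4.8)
The plan is to analyze which closed $2$-forms $\nu \in \Omega_2^0(B_N,\Z_2)$ can be vortices, i.e.\ have connected support in $\mathcal{G}_2$ and satisfy $d\nu=0$. The key structural fact is that $\delta\nu = 0$, where $\delta$ is the coboundary operator: since $\nu$ is closed, for every $3$-cell $c$ the sum $\sum_{p \in \partial c} \nu(p)$ vanishes in $\Z_2$, which says that the plaquettes of $\supp\nu^+$ lying in the boundary of any fixed $3$-cell come in an even number. Equivalently, viewing $\supp\nu^+$ as a set of plaquettes, around every "hinge" (the coboundary $\hat\partial p'$ of an edge-pair, or more precisely around every $(m-3)$-dimensional corner) the number of plaquettes of $\nu$ must be even. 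The smallest such configuration is the coboundary $\hat\partial e$ of a single edge $e$: the plaquettes containing a fixed edge $e$ in their boundary. In $\Z^m$ there are $2(m-1)$ such plaquettes, which gives the first candidate size $|\supp\nu^+| = 2(m-1)$; one should check this is genuinely closed and connected.

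First I would establish the lower bound: any nonzero closed $2$-form supported away from $\partial B_N$ has $|\supp\nu^+| \ge 2(m-1)$. This follows because if $p \in \supp\nu^+$, then each of the $2(m-1)$ edges $e \in \partial p$ forces, by closedness of $\nu$ read off the $3$-cells containing $p$, at least one further plaquette of $\supp\nu^+$ sharing that edge — and counting carefully (each new plaquette can "resolve" the closedness constraint along at most a controlled number of the edges of $p$) yields $2(m-1)$ plaquettes as the minimum, realized exactly by $\hat\partial e$. Next I would handle the next possible sizes. The value $4(m-1)-2$ should arise as a "union of two coboundaries of adjacent edges $e_1,e_2$ sharing an endpoint": $\hat\partial e_1 + \hat\partial e_2$ has $2(m-1) + 2(m-1)$ plaquettes, minus $2$ for the plaquette(s) appearing in both coboundaries (the plaquette spanned by $e_1$ and $e_2$ is counted in both, and in $\Z_2$ it cancels). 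One must verify that there is no closed connected vortex of size strictly between $2(m-1)$ and $4(m-1)-2$, and none of size strictly between $4(m-1)-2$ and $4(m-1)$; this is the combinatorial heart of the argument. The idea is a parity/extremality argument: enumerate the minimal ways to "glue" elementary pieces subject to the closedness constraint, showing the support size jumps by at least $2(m-1)-2$ each time one adds structure beyond a single edge coboundary, and that the only size in the gap $(2(m-1), 4(m-1))$ that is attainable is $4(m-1)-2$.

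Concretely, the key steps in order are: (1) reformulate "vortex" via the discrete Poincaré/closedness condition as a local even-degree condition on the plaquette set $\supp\nu^+$ around each edge; (2) show $\hat\partial e$ is a vortex with $|\supp\nu^+| = 2(m-1)$, and that this is the unique minimum; (3) show that any vortex with $|\supp\nu^+| > 2(m-1)$ and $< 4(m-1)$ is obtained, up to the $\Z_2$ structure, from two overlapping edge-coboundaries, forcing $|\supp\nu^+| = 4(m-1)-2$; (4) conclude the stated trichotomy. The main obstacle I expect is step (3): ruling out all intermediate sizes requires a careful case analysis of how a connected closed plaquette set can fail to be a single $\hat\partial e$ while remaining small, and one must be careful that the argument uses $m \ge 3$ (so that edges have $\ge 2(m-1) \ge 4$ plaquettes around them, giving room for the counting) and that the "no boundary plaquettes" hypothesis is what prevents degenerate small configurations near $\partial B_N$. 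A clean way to organize step (3) is to pick any edge $e$ in the "support of the boundary structure" of $\nu$ where the local count is exactly $2$, peel off one $\hat\partial e'$, and induct, tracking how the overlap is forced to have size exactly $2$ in the extremal case.
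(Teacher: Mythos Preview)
Your plan is broadly aimed at the right targets, but it takes a substantially harder route than the paper's proof and misses the one key observation that makes the lemma short.

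The paper's proof has two ingredients. First, it simply \emph{cites} an external result (\cite[Lemma~2.3]{firstpaper}) for the statement that the two smallest possible values of $|\supp\nu^+|$ are $2(m-1)$ and $4(m-1)-2$; it does not re-prove the classification you outline in your steps (1)--(3). Second, and this is the part you do not have, it proves that $|\supp\nu^+|$ is always \emph{even}. This alone closes the gap between $4(m-1)-2$ and $4(m-1)$, since the only integer strictly between them is $4(m-1)-1$, which is odd. The evenness argument is short: by the discrete Poincar\'e lemma write $\nu=d\sigma$, observe that $p\in\supp\nu^+$ iff an odd number of edges of $\partial p$ lie in $\supp\sigma^+$, and count via
\[
|\supp\nu^+|=\sum_{e\in\supp\sigma^+}|\hat\partial e|\;-\;2(\text{overcount correction}),
\]
where each $|\hat\partial e|=2(m-1)$ is even. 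No case analysis is needed.

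Your proposal instead attempts a self-contained classification of all small vortices by peeling off elementary coboundaries $\hat\partial e$ and tracking overlaps. This is a legitimate and more self-contained strategy (it would effectively re-prove the cited external lemma), but it is considerably more work, and your sketch of step~(3) is where the difficulty lives: the ``peel and induct'' argument is vague, and ruling out every size strictly between $2(m-1)$ and $4(m-1)-2$ genuinely requires care. Also, a small correction: in your step~(1) you describe closedness as an even-degree condition ``around each edge,'' but $d\nu=0$ is a constraint on $3$-cells (as you correctly said earlier in the paragraph), not on edges. If you want to keep your approach, I would at minimum replace the intermediate-size analysis by the parity argument above, which eliminates the hardest gap for free.
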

\begin{proof}
  The first claim, specifically that~$|\supp~ \nu^+|\in\{2(m-1),4(m-1)-2\}$~is a consequence of \cite[Lemma 2.3]{firstpaper}. 
  Let now $\nu \in \Lambda(B_N)$. We will prove that $\supp \nu^+$ is even, which will prove the lemma.
    Hence, recall that by Poincar\'e's Lemma there exists a one-form $\sigma \in \Omega_1(B_N,\Z_2)$ such that $d\sigma=\nu$. In particular, a positively oriented plaquette $p \in C_2(B_N)^+$ is in the support of $\nu^+$ if and only if there is an odd number of edges $e \in C_1(B_N)^+$ in the boundary of $p$ such that $e \in \supp \sigma^+$. In other words, $p \in \supp \nu^+ \iff |e \in \supp \sigma^+: e \in \partial p| \in \{1,3\}$. Note that $e \in \partial p$ is equivalent to $p \in \hat{\partial} e$. Hence, we deduce the following: every plaquette containing one boundary edge $e \in \supp \sigma^+$ is a candidate to be in the support of $\nu^+$. We then need to subtract all plaquettes that are counted twice or more, i.e., which satisfy $|\{e \in \supp \sigma : p \in \hat{\partial} e\}| \in \{2,3,4\}$. Hence, one obtains
    \begin{align}
    \label{eq:finallysomeestimatesonsupp}
    |\supp ~ \nu^+|=& \sum_{e \in \supp \sigma^+} |\hat{\partial}e| - 2 \Big(\sum_{p \in C_2(B_N)^+}\mathbbm{1}\{|\{e \in \supp ~ \sigma^+ : p \in \hat{\partial}e\}| \geq 2\} \\  & \nonumber + 2\mathbbm{1}\{|\{e \in \supp ~ \sigma^+ : p \in \hat{\partial}e\}| = 4\}\Big).
    \end{align}
    Now, note that every edge $e$ has $2(m-1)$ adjacent plaquettes in $\Z^m$, i.e.,
    \[
    |\hat{\partial}e|=2(m-1).
    \] 
 Thus, by \eqref{eq:finallysomeestimatesonsupp}, $|\supp \nu^+|$ is an even number. Therefore, the next vortex cluster's size is $|\supp \nu^+| = 4(m-1)$. This completes the proof.
\end{proof}
Figure \ref{fig:THEBIGFIGURE} shows the structure of the smallest vortices.
\begin{figure}[ht]
  \centering
  \setlength{\tabcolsep}{8pt}
  \renewcommand{\arraystretch}{1.4}
 \resizebox{0.85\textwidth}{!}{
  \begin{tabular}{c c c c}
    \toprule
    $|\text{supp} ~\nu^+|$ &
    $\text{Support of} ~ \sigma$ s.t. $d\sigma=\nu$ &
    $\text{Support of} ~\nu$ &
    Induced subgraph of $\mathcal{G}_2$ \\
    \midrule

    $2(m-1)$ &
    \SigmaA &
    \NuA &
    \GraphA \\[2ex]

    $4(m-1)-2$ &
    \SigmaB &
    \NuB &
    \GraphB \\[2ex]

    $4(m-1)-2$ &
    \SigmaC &
    \NuC &
    \GraphC \\[2ex]

    $4(m-1)$ &
    \SigmaD &
    \NuD &
    \GraphD \\

    \bottomrule
  \end{tabular}
  }
  \caption{Minimal connected vortices $\nu$ in $\mathbb{Z}^m$ for $m=3$. For each support size, one can see an associated one-form $\sigma$, with $d\sigma=\nu$, the support of the vortex $\nu$ itself, and the induced subgraph of the plaquette adjacency graph $\mathcal{G}_2$.}
  \label{fig:THEBIGFIGURE}
\end{figure}
\subsection{Construction of the loops}
\label{subsec:loopsconstruction}
In this section, we will define and fix a configuration of loops $\gamma_1,..., \gamma_n$. The idea is that the loops are assumed to be \textit{stacked} on top of each other. Now, recall the notation from Section \ref{subsec:DEC}, in which $\tilde{e}_1,\tilde{e}_2,\tilde{e}_3$ are the first three basis vectors from $\Z^m$ and $d\tilde{e}_1,d\tilde{e}_2,d\tilde{e}_3$ are the corresponding positively oriented edges. Recall also that if $\chi \in C_k(B_N,\Z)$ is a k-chain, and $y \in C_k(B_N)^+$ a k-cell, then  $\chi[y]$ is the coefficient associated to the k-cell $y$ in $\chi$.
With these notations in mind, we can formally define the loop and oriented surface configuration as follows.
\begin{Def}
\label{Def:Wilsonloops}
Assume that $B_N$ is large enough, such that all oriented surfaces and loops in this definition are well-defined.
Let $x \in \Z^m$ and $L_1,L_2 \in \N$. Define the discrete rectangle
\[
R := \{x +a \tilde{e}_1+b \tilde{e}_2: 0 \leq a \leq L_1, 0 \leq b \leq L_2\}.
\]
Define the 2-chain $q_1 \in C_2(B_N,\Z)$ as the formal sum of all positively oriented plaquettes $p \in C_2(B_N)^+$ whose four boundary edges lie in the rectangle $R$. \newline 
Then, define the translation operator $\tau$ by
\[
\tau : C_2(B_N,\Z) \to C_2(B_N,\Z), ~ (\tau\chi)[p] := \chi[p-d\tilde{e}_3].
\]
Now we define recursively the two chains $q_i$ for $i \in \{2,...,n\}$, by
\[
q_i := \tau q_{i-1}.
\]
In words, every $q_i$ is the translation of the positively oriented plaquettes of $q_{i-1}$ by $d\tilde{e}_3$. \newline  
Finally, for $i \in [n]$, define
\[
\gamma_i := \partial q_i.
\]
Note that by construction $\partial \gamma_i =0$ so that $(\gamma_i)_{i \in [n]}$ are well defined Wilson loops, and $(q_i)_{i \in [n]}$ well defined oriented surfaces.
\end{Def}
The definition of the oriented surfaces $(q_i)_{i \in [n]}$ as "flat" surfaces will prove to be convenient for future computations, as well as making the oriented surfaces have disjoint support. \newline 
The numbers $L_1$ and $L_2$ will be used solely in the definition. From now on, we will let $|\gamma_1|$ denote the length of the loops, without distinguishing the values of $L_1,L_2$. In the final proof, we will make $|\gamma_1|$ large, without any distinction on the values $L_1,L_2$.
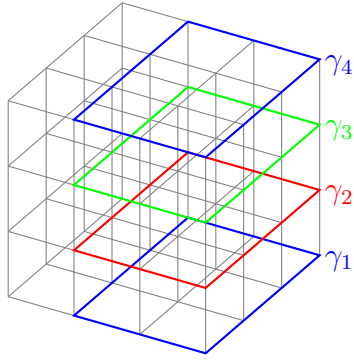
\begin{figure}[ht]
\centering
\begin{tikzpicture}[tdplot_main_coords, scale=1]


  \foreach \x in {0,1,2} {
    \foreach \y in {0,1,2,3} {
      \foreach \z in {0,1,2,3} {
        \draw[gray] (\x,\y,\z) -- (\x+1,\y,\z);
      }
    }
  }

  \foreach \x in {0,1,2,3} {
    \foreach \y in {0,1,2} {
      \foreach \z in {0,1,2,3} {
        \draw[gray] (\x,\y,\z) -- (\x,\y+1,\z);
      }
    }
  }

  \foreach \x in {0,1,2,3} {
    \foreach \y in {0,1,2,3} {
      \foreach \z in {0,1,2} {
        \draw[gray] (\x,\y,\z) -- (\x,\y,\z+1);
      }
    }
  }

   \draw[blue, thick]
  (0,1,0) -- (0,3,0) -- (3,3,0) -- (3,1,0) -- cycle;
\draw[red, thick]
  (0,1,1) -- (0,3,1) -- (3,3,1) -- (3,1,1) -- cycle;
 \draw[green, thick]
  (0,1,2) -- (0,3,2) -- (3,3,2) -- (3,1,2) -- cycle;
  \draw[blue, thick]
  (0,1,3) -- (0,3,3) -- (3,3,3) -- (3,1,3) -- cycle;
    \node[blue] at (0,3.3,0) {$\gamma_1$};
    \node[red] at (0,3.3,1) {$\gamma_2$};
    \node[green] at (0,3.3,2) {$\gamma_3$};
    \node[blue] at (0,3.3,3) {$\gamma_4$};
\end{tikzpicture}
\caption{Example of four loops $\gamma_1,\gamma_2,\gamma_3,\gamma_4$ as defined in Definition \ref{Def:Wilsonloops} for $\Z^3$.}
\label{fig:exfourloops}
\end{figure}
\begin{Remark}
    The above loops were chosen to facilitate computations. With small modifications, one can also prove the main result using more general loops that are not equal up to a translation or are more distant from each other.
\end{Remark}
\newpage
\subsection{Estimates for cluster activities}
Recall the definitions of $\Xi[I] ~ \text{and} ~\Psi_\beta[I]$ from \eqref{eq:XiI} and \eqref{eq:notation}, as well as $V_n^-(W_\gamma)$ from \eqref{eq:Vnpositivenegative}. \newline 
In this section, we will prove that for $\beta$ large enough, the main contributions in the term $V_n^-(W_\gamma)$ come from the correlated activities of clusters that only connect two neighboring loops. These correlated activities will then be used to obtain estimates on $V_n^-(W_\gamma)$, which will lead to the sign of $U_n$ using \eqref{eq:decompv}. The clusters connecting two neighboring loops in our construction correspond to the clusters in $\Xi[\{i,i+1\}]$ for $~ i \in \{1,...,n-1\}$. 
We will first estimate the correlated activity of these terms in~Lemma~\ref{lem:firstorder} as a function of $\beta$ and the length of the loops $|\gamma_1|$. We will then find an upper bound of the correlated activities coming from the other clusters in~Lemma~ \ref{lem:restorders}. \newline 
Recall that by our construction the loops are equal up to a translation, and hence have the same length $|\gamma_1|=...=|\gamma_n|$.
\begin{Lemma}
    \label{lem:firstorder}
    Consider Ising lattice gauge theory with parameter $\beta \ge \beta_0$. Let $(\gamma_i)_{i \in [n]}$ be loops in Definition \ref{Def:Wilsonloops}. Then, for $i \in \{1,...,n-1\}$, it holds that
    \begin{align*}
        &\Psi_\beta[\{i,i+1\}] =  |\gamma_i| \Big( \myexp(-4(4(m-1)-2)\beta) +  \mathcal{O}\big(\myexp{(-4(4(m-1))\beta)}\big)\Big),
    \end{align*}
    where the term $\mathcal{O}\big(\myexp{(-4(4(m-1))\beta)}\big)$ is independent of $|\gamma_i|$.
\end{Lemma}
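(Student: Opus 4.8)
The plan is to compute $\Psi_\beta[\{i,i+1\}]=\sum_{\mathcal{V}\in\Xi[\{i,i+1\}]}\Psi_\beta(\mathcal{V})$ by isolating the vortex clusters of minimal weight in $\Xi[\{i,i+1\}]$, which will produce the leading term $|\gamma_i|\,\myexp(-4(4(m-1)-2)\beta)$, and bounding the contribution of all remaining clusters by $|\gamma_i|\,\mathcal{O}(\myexp(-4(4(m-1))\beta))$ via the absolute convergence of the cluster expansion. Throughout I would use that, by Definition \ref{Def:Wilsonloops}, the loops $\gamma_1,\dots,\gamma_n$ lie in consecutive hyperplanes orthogonal to $\tilde{e}_3$, each loop consisting only of $\tilde{e}_1$- and $\tilde{e}_2$-directed edges, with $\gamma_{j+1}$ the $d\tilde{e}_3$-translate of $\gamma_j$; and that for a single vortex $\nu=d\sigma$ and any flat surface $q_j$, Stokes' theorem \eqref{eq:Stokesthm} gives $\nu(q_j)=\sigma(\gamma_j)$.

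First I would identify the minimal clusters. By \eqref{eq:notation||} any cluster with $n(\mathcal{V})\geq 2$ has $|\mathcal{V}|\geq 2\cdot 2(m-1)=4(m-1)$, so any cluster in $\Xi[\{i,i+1\}]$ with $|\mathcal{V}|<4(m-1)$ is a single vortex $\mathcal{V}=\{\nu\}$, for which $\mathcal{U}(\mathcal{V})=1$ and $\Psi_\beta(\mathcal{V})=\myexp(-4\beta|\supp\nu^+|)$, and by Lemma \ref{lem:sizesupportvortex} together with the classification in Figure \ref{fig:THEBIGFIGURE} either $|\supp\nu^+|=2(m-1)$ and $\sigma$ is supported on a single positively oriented edge, or $|\supp\nu^+|=4(m-1)-2$ and $\sigma$ is supported on two positively oriented edges that either share a vertex or are parallel at $\ell^\infty$-distance $1$. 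In the first case $\supp\sigma$ meets at most one of the $\gamma_j$, so $\sigma(\gamma_i)$ and $\sigma(\gamma_{i+1})$ cannot both be $1$ and $\nu\notin\Xi[\{i,i+1\}]$. In the second case $\nu\in\Xi[\{i,i+1\}]$ requires $\supp\sigma$ to contain an odd number of edges of $\gamma_i$, an odd number of edges of $\gamma_{i+1}$, and no edge of any other $\gamma_j$; since the $\gamma_j$ are horizontal, at pairwise distinct heights, and $\gamma_{i+1}$ is the $d\tilde{e}_3$-translate of $\gamma_i$, the only possibility is $\supp\sigma=\{e,\,e+d\tilde{e}_3\}$ with $e$ an edge of $\gamma_i$, so that $e+d\tilde{e}_3$ is the corresponding edge of $\gamma_{i+1}$. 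There is exactly one such vortex for each of the $|\gamma_i|$ edges of $\gamma_i$, each of support $4(m-1)-2$ and correlated activity $\myexp(-4(4(m-1)-2)\beta)$, which sums to the asserted leading term.

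It then remains to estimate $\sum_{\mathcal{V}\in\Xi[\{i,i+1\}],\,|\mathcal{V}|\geq 4(m-1)}\Psi_\beta(\mathcal{V})$. For such a $\mathcal{V}$ one has $\mathcal{V}(q_i)=1$, so by \eqref{eq:chain} some $\nu^\ast\in\mathcal{V}$ satisfies $\nu^\ast(q_i)=1$; choosing a primitive $\sigma^\ast$ of $\nu^\ast$ whose support lies within $\ell^\infty$-distance $\mathcal{O}(|\supp(\nu^\ast)^+|)$ of $\supp\nu^\ast$ (such a primitive exists because $\supp\nu^\ast$ is connected, and this is the localization already exploited in \cite{firstpaper} for \eqref{eq:firstorderexpectation}), the identity $\sigma^\ast(\gamma_i)=\nu^\ast(q_i)=1$ forces $\supp\sigma^\ast$, hence $\supp\nu^\ast$, hence $\supp(\mathcal{V})$, to lie within $\ell^\infty$-distance $\mathcal{O}(|\mathcal{V}|)$ of $\gamma_i$. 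Grouping these clusters by a plaquette of $\supp(\mathcal{V})$ close to $\gamma_i$ and invoking the uniform bounds that underlie the absolute convergence of the cluster expansion for $\beta\geq\beta_0$ (\cite[Lemma 3.2]{firstpaper}), the total activity of clusters of weight at least $4(m-1)$ containing a fixed plaquette at distance $d$ from $\gamma_i$ is $\mathcal{O}(\myexp(-c\beta\max(4(m-1),d)))$ for some $c>0$; summing over the $\mathcal{O}(|\gamma_i|\,d^{m-1})$ such plaquettes at each $d$ and over $d\geq 0$ gives a bound $|\gamma_i|\,\mathcal{O}(\myexp(-4(4(m-1))\beta))$ with constant independent of $|\gamma_i|$. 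Combining with the previous paragraph completes the proof.

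I expect the main obstacle to be this last estimate: one must show that a cluster interacting with the flat \emph{two-dimensional} surface $q_i$ is anchored within bounded distance of the \emph{one-dimensional} loop $\gamma_i$, so that the combinatorial prefactor is $\mathcal{O}(|\gamma_i|)$ rather than $\mathcal{O}(\operatorname{area}(q_i))$ — which is where the existence of a local primitive of a vortex and the quantitative cluster-expansion estimates of \cite{firstpaper} are needed — while keeping all constants uniform in $|\gamma_i|$. A secondary point, genuinely new compared with the single-loop estimate \eqref{eq:firstorderexpectation}, is the observation in the second paragraph that the single-edge vortices, which dominate for one loop, interact with only one of two stacked surfaces and hence drop out of $\Xi[\{i,i+1\}]$, which is what raises the leading exponent from $2(m-1)$ to $4(m-1)-2$; the enumeration of two-edge vortices should also be checked carefully near corners of $\gamma_i$ and near $\partial B_N$.
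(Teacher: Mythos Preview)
Your proposal is correct and follows essentially the same strategy as the paper: isolate the minimal clusters in $\Xi[\{i,i+1\}]$, count them to get the leading term $|\gamma_i|\,\myexp(-4(4(m-1)-2)\beta)$, and bound the remainder. Two small differences are worth pointing out. First, the paper splits according to $n(\mathcal{V})=1$ versus $n(\mathcal{V})\geq 2$ (Lemmas \ref{lem:firstordertwosummands} and \ref{lem:firstordersecondsummand}) rather than by $|\mathcal{V}|$, but this is cosmetic. Second, your identification of the minimal single-vortex contributions via Stokes' theorem, reducing $\nu(q_j)=1$ to $\sigma(\gamma_j)=1$ for a primitive $\sigma$, is a clean alternative to the paper's direct geometric inspection of which small vortices meet the flat surfaces in an odd number of plaquettes (Figures \ref{fig:nuofminimsize}--\ref{fig:thethirdone}); both arrive at exactly $|\gamma_i|$ vortices of size $4(m-1)-2$.

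For the error term --- the part you flag as the main obstacle --- the paper does not re-derive the localisation. It simply invokes \cite[Lemma~4.7]{firstpaper}, which already gives the bound $\sum_{|\mathcal{V}|\geq k,\,\mathcal{V}\text{ interacts with }q_i}|\Psi_\beta(\mathcal{V})|\leq C_m|\gamma_i|\,\myexp(-4k\beta)$ with $C_m$ independent of $|\gamma_i|$. Your sketch via a local primitive and anchoring near $\gamma_i$ is the right intuition for why such a lemma holds (and your worry that the prefactor should be $|\gamma_i|$ rather than $\operatorname{area}(q_i)$ is exactly the point), but you can replace the whole paragraph by a citation. The claim that a vortex admits a primitive supported within $\ell^\infty$-distance $\mathcal{O}(|\supp\nu^+|)$ of $\supp\nu$ is the one step in your sketch that would need an independent argument if you insisted on proving it from scratch.
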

The idea of the proof is to divide the clusters from $\Xi[\{i,i+1\}]$ into different categories. For this purpose, recall the notation $n(\mathcal{V})$ from \eqref{eq:notation||}. With this notation, we have
\begin{align}
\label{eq:firstorderpsi}
 \Psi_\beta[\{i,i+1\}] = \sum_{\mathcal{V} \in \Xi[\{i,i+1\}]} \Psi_\beta (\mathcal{V}) = \sum_{\substack{\mathcal{V} \in \Xi[\{i,i+1\}] \\ n(\mathcal{V}) = 1}} \Psi_\beta (\mathcal{V})+\sum_{\substack{\mathcal{V} \in \Xi[\{i,i+1\}] \\ n(\mathcal{V}) \geq 2}} \Psi_\beta (\mathcal{V}).
\end{align}
We can now state the following lemma.
\begin{Lemma}
\label{lem:firstordertwosummands}
Under the same assumptions as for Lemma \ref{lem:firstorder}, for $i \in \{1,...,n-1\}$, we have
    \begin{align*}
        \sum_{\substack{\mathcal{V} \in \Xi[\{i,i+1\}] \\ n(\mathcal{V}) = 1}} \Psi_\beta (\mathcal{V}) =   |\gamma_i| \Big(\big( \myexp(-4(4(m-1)-2)\beta) +  \mathcal{O}\big(\myexp{(-4(4(m-1))\beta)}\big)\Big),
    \end{align*}
    where the term $\mathcal{O}\big(\myexp{(-4(4(m-1))\beta)}\big)$ is independent of $|\gamma_i|$.
\end{Lemma}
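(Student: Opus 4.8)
The plan is to compute the $n(\mathcal{V})=1$ part of \eqref{eq:firstorderpsi} essentially explicitly, isolating one dominant family of vortices. Since $n(\mathcal{V})=1$ forces $\mathcal{V}=\{\nu\}$ for a single vortex $\nu\in\Lambda(B_N)$, one has $\mathcal{U}(\mathcal{V})=1$ and hence $\Psi_\beta(\mathcal{V})=\phi_\beta(\nu)=\myexp(-4\beta|\supp\nu^+|)$; so the sum to be evaluated is $\sum_{\nu}\myexp(-4\beta|\supp\nu^+|)$, taken over the vortices $\nu$ with $\nu(q_i)=\nu(q_{i+1})=1$ and $\nu(q_j)=0$ for $j\notin\{i,i+1\}$. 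Writing $\nu=d\sigma$ by the discrete Poincaré lemma and using Stokes' theorem \eqref{eq:Stokesthm}, one has $\nu(q_j)=\sigma(\gamma_j)$. I will also use repeatedly that by Definition \ref{Def:Wilsonloops} the loops $\gamma_1,\dots,\gamma_n$ are flat horizontal rectangles at pairwise distinct heights, hence pairwise disjoint, with $\gamma_{i+1}=\gamma_i+d\tilde e_3$.

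First I would pin down the vortices of minimal support that can interact with exactly $q_i$ and $q_{i+1}$. By Lemma \ref{lem:sizesupportvortex}, $|\supp\nu^+|\in\{2(m-1),4(m-1)-2\}$ or $|\supp\nu^+|\ge 4(m-1)$, and in the two small cases $\nu$ is one of the types displayed in Figure \ref{fig:THEBIGFIGURE}, by \cite[Lemma 2.3]{firstpaper}. A vortex with $|\supp\nu^+|=2(m-1)$ is $\hat\partial e$ for a single edge $e$; since a boundary edge of the flat surface $q_j$ lies in exactly one plaquette of $q_j$, an interior edge in exactly two, and any other edge in none, one gets $\nu(q_j)=\mathbbm{1}\{e\in\gamma_j\}$, so such a $\nu$ interacts with at most one surface and cannot lie in $\Xi[\{i,i+1\}]$. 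A vortex with $|\supp\nu^+|=4(m-1)-2$ is $d\mathbbm{1}_{\{e_1,e_2\}}$ for two distinct edges that are either parallel at distance $1$ or share a vertex, and then $\nu(q_j)=\mathbbm{1}\{e_1\in\gamma_j\}+\mathbbm{1}\{e_2\in\gamma_j\}$. Imposing that this equals $1$ for $j\in\{i,i+1\}$ and $0$ otherwise, and using disjointness together with $\gamma_{i+1}=\gamma_i+d\tilde e_3$, a short case check forces $\{e_1,e_2\}=\{e,\,e+d\tilde e_3\}$ for a unique edge $e\in\gamma_i$; call this vortex $\nu_e$. Conversely, for each of the $|\gamma_i|$ edges $e$ of $\gamma_i$, $\nu_e$ is a connected vortex of size $4(m-1)-2$ (two parallel edges at distance $1$, the type-B case of Figure \ref{fig:THEBIGFIGURE}) lying in $\Xi[\{i,i+1\}]$, and distinct $e$ give distinct $\nu_e$. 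Each contributes $\myexp(-4(4(m-1)-2)\beta)$, which produces the main term $|\gamma_i|\,\myexp(-4(4(m-1)-2)\beta)$.

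Next I would control the remaining vortices, those with $|\supp\nu^+|=s\ge 4(m-1)$. The crucial point is a localization estimate: any such $\nu\in\Xi[\{i,i+1\}]$ has $\supp\nu$ within $\ell^\infty$-distance $\le Cs$ of $\gamma_i$ for a constant $C=C(m)$. Indeed, having connected support, $\nu$ lies inside a ball of radius $\le s$; if this ball were at positive distance from $\gamma_i$, then (here $m\ge 3$ is used) one could span $\gamma_i$ by an oriented surface $q_i'$ avoiding the ball, and Stokes' theorem would give $\nu(q_i)=\nu(q_i')=0$, contradicting $\nu\in\Xi[\{i,i+1\}]$. The number of plaquettes within distance $Cs$ of $\gamma_i$ is at most $c_m\,s^{m}\,|\gamma_i|$, and, by a standard Peierls-type count on the bounded-degree graph $\mathcal{G}_2$, the number of connected vortices of size $s$ through a fixed plaquette is at most $D^{s}$ for some $D=D(m)$; hence the number of admissible $\nu$ of size $s$ is at most $c_m'\,s^{m}D^{s}\,|\gamma_i|$. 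Summing, the contribution of these vortices is at most $|\gamma_i|\sum_{s\ge 4(m-1)}c_m'\,s^{m}D^{s}\myexp(-4s\beta)$, which for $\beta\ge\beta_0$ (which already forces the cluster series to converge absolutely) is a convergent series whose $s=4(m-1)$ term dominates; thus this contribution is $|\gamma_i|\cdot\mathcal{O}(\myexp(-4(4(m-1))\beta))$, with the implied constant depending only on $m$, and in particular independent of $|\gamma_i|$ and of $N$. Adding the main term and this error gives the asserted identity.

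The hard part will be the localization step of the last paragraph: the surface-pushing argument that an interacting vortex must sit near $\gamma_i$ is exactly where the hypothesis $m\ge 3$ enters, since in $\Z^2$ there is no room to deform a spanning surface around a bounded obstacle, and a clean formulation requires being somewhat careful about distances versus vortex size. A secondary point needing care is the case analysis identifying, among all size-$(4(m-1)-2)$ vortices, precisely the $|\gamma_i|$ vortices $\nu_e$ that interact with $q_i$ and $q_{i+1}$ and with no other $q_j$; this is where the explicit stacked-flat-rectangle geometry of Definition \ref{Def:Wilsonloops} does the work, and it should also make the injectivity $e\mapsto\nu_e$ transparent.
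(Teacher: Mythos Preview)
Your proposal is correct and follows essentially the same approach as the paper: identify the dominant single vortices via the size classification of Lemma \ref{lem:sizesupportvortex}, rule out sizes $2(m-1)$ and the ``wrong'' type of size $4(m-1)-2$, count exactly $|\gamma_i|$ vortices of the ``right'' type $4(m-1)-2$, and then bound the tail of size $\ge 4(m-1)$.

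The only noteworthy difference is in presentation rather than strategy. For the main term, you phrase the case analysis algebraically via Stokes' theorem, writing $\nu(q_j)=\sigma(\gamma_j)=\mathbbm{1}\{e_1\in\gamma_j\}+\mathbbm{1}\{e_2\in\gamma_j\}$ and reading off the constraint $\{e_1,e_2\}=\{e,e+d\tilde e_3\}$; the paper argues the same conclusion geometrically by inspecting the two configuration types in Figure \ref{fig:nuofsecminimsize}. For the error term, you give a self-contained localization-plus-Peierls argument (deform the spanning surface around the vortex in $\Z^m$, $m\ge 3$, to force proximity to $\gamma_i$, then count), whereas the paper simply invokes \cite[Lemma 4.7]{firstpaper} to obtain the same bound $|\gamma_1|C_m\,\myexp(-4(4(m-1))\beta)$. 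Your argument is what that lemma encodes, so there is no substantive divergence; your version has the advantage of making explicit where $m\ge 3$ is used, while the paper's has the advantage of brevity.
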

\begin{Lemma}
    \label{lem:firstordersecondsummand}
Under the same assumptions as for Lemma \ref{lem:firstorder}, for $i \in \{1,...,n-1\}$, we have
    \begin{align*}
         \sum_{\substack{\mathcal{V} \in \Xi[\{i,i+1\}] \\ n(\mathcal{V}) \geq 2}} \Psi_\beta (\mathcal{V})  =  |\gamma_i| \Big(- \myexp{(-4(4(m-1))\beta)} +  o\big(\myexp{(-4(4(m-1))\beta)}\big)\Big), 
    \end{align*}
    where the term $o\big(\myexp{(-4(4(m-1))\beta)}\big)$ is independent of the loop's size.
\end{Lemma}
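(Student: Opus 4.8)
The plan is to expand the sum via \eqref{eq:Decompositioncorracti} as
\[
\sum_{\substack{\mathcal{V} \in \Xi[\{i,i+1\}] \\ n(\mathcal{V}) \geq 2}} \Psi_\beta (\mathcal{V}) = \sum_{\substack{\mathcal{V} \in \Xi[\{i,i+1\}] \\ n(\mathcal{V}) \geq 2}} \mathcal{U}(\mathcal{V})\,\myexp(-4\beta|\mathcal{V}|),
\]
and to organize it according to the total support size $|\mathcal{V}|$. By Lemma \ref{lem:sizesupportvortex} every vortex satisfies $|\supp \nu^+|\ge 2(m-1)$ and $|\supp\nu^+|$ is always even, so any cluster with $n(\mathcal{V})\ge 2$ has even $|\mathcal{V}|\ge 4(m-1)$. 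Equality forces $\mathcal{V}$ to be two minimal vortices; but two equal copies $\{\nu,\nu\}$ give $\mathcal{V}(q_j)=2\nu(q_j)=0$ for all $j$, hence such $\mathcal{V}$ lies in $\Xi[\emptyset]$, not in $\Xi[\{i,i+1\}]$. Thus the only size-$4(m-1)$ clusters in $\Xi[\{i,i+1\}]$ are (connected) pairs $\{\nu_1,\nu_2\}$ of \emph{distinct} minimal vortices, while all remaining clusters have $|\mathcal{V}|\ge 6(m-1)-2>4(m-1)$ (either $n(\mathcal{V})\ge 3$, or a pair with one vortex of size $\ge 4(m-1)-2$).

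For the leading term I would use the classification in Figure \ref{fig:THEBIGFIGURE}: a minimal vortex is $\nu_e:=d\mathbbm{1}_e$ for a single edge $e$, supported on the $2(m-1)$ plaquettes containing $e$. Since $\mathcal{V}(q_j)$ depends only on $\partial q_j=\gamma_j$, I may compute with the flat surfaces of Definition \ref{Def:Wilsonloops}, and a direct check gives $\nu_e(q_j)=\mathbbm{1}\{e\in\gamma_j\}$ (an interior horizontal edge of the $j$-th rectangle lies in two plaquettes of $q_j$, a boundary edge in one, any other edge in none). Hence a pair $\{\nu_{e_1},\nu_{e_2}\}\in\Xi[\{i,i+1\}]$ must have, up to relabelling, $e_1\in\gamma_i$ and $e_2\in\gamma_{i+1}$. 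As $\gamma_{i+1}=\gamma_i+d\tilde e_3$, deciding when the two wrapping tubes $\supp\nu_{e_1}^+,\supp\nu_{e_2}^+$ are adjacent in $\mathcal{G}_2$ shows that $\{\nu_{e_1},\nu_{e_2}\}$ is a connected cluster precisely when $e_2=e_1+d\tilde e_3$, up to a bounded number of exceptional pairs localized at the four corners of the rectangle. This yields $|\gamma_i|$ bulk pairs, and for each one the only connected graph on two vertices is a single edge, so by \eqref{eq:DefUrsellsecond} $\mathcal{U}(\{\nu_{e_1},\nu_{e_2}\})=-1$ and its correlated activity equals $-\myexp(-4\beta\cdot 4(m-1))$. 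Since all size-$4(m-1)$ clusters in $\Xi[\{i,i+1\}]$ have $\mathcal{U}=-1$, there is no cancellation at this order and their total contribution is $-|\gamma_i|\myexp(-4(4(m-1))\beta)$.

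It then remains to bound $\sum|\Psi_\beta(\mathcal{V})|$ over $\mathcal{V}\in\Xi[\{i,i+1\}]$ with $n(\mathcal{V})\ge 2$ and $|\mathcal{V}|\ge 6(m-1)-2$. The geometric point is that $\mathcal{V}(q_i)=1$ means $\mathcal{V}$ is homologically linked with $\gamma_i$; since $\mathcal{V}$ has bounded diameter it cannot wind around the large loop $\gamma_i$, so $\supp\mathcal{V}$ must meet a bounded neighbourhood of one of the $|\gamma_i|$ edges of $\gamma_i$, leaving at most $|\gamma_i|\cdot\mathrm{poly}(|\mathcal{V}|)$ positions. Feeding this into the standard cluster-expansion estimate of \cite[Lemma 3.2]{firstpaper} (available because $\beta\ge\beta_0$) — the number of vortex clusters of total size $s$ through a fixed plaquette is at most $C^s$, with $|\mathcal{U}(\mathcal{V})|$ controlled by the number of connected spanning subgraphs — gives a bound of the form $|\gamma_i|\sum_{s\ge 6(m-1)-2}\mathrm{poly}(s)\,C^s\myexp(-4\beta s)$, which for $\beta$ large is dominated by its first term of order $\myexp(-4\beta(6(m-1)-2))=o(\myexp(-4(4(m-1))\beta))$, with a constant independent of the loop. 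Combining the three pieces gives the claimed identity. I expect the main obstacle to be precisely the combinatorial bookkeeping in the middle step: pinning down the size-$4(m-1)$ clusters in $\Xi[\{i,i+1\}]$ exactly (including the corner corrections) while simultaneously showing that every larger cluster in $\Xi[\{i,i+1\}]$ sits at the next admissible size $6(m-1)-2$ and contributes only linearly in $|\gamma_i|$.
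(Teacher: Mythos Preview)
Your approach is essentially identical to the paper's: rule out $\{\nu,\nu\}$, count pairs of distinct minimal vortices giving the leading $-|\gamma_i|\,\myexp(-16(m-1)\beta)$, push every other cluster with $n(\mathcal V)\ge 2$ to size $\ge 6(m-1)-2$, and bound the tail by a cluster-expansion estimate linear in $|\gamma_i|$.

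Two remarks. First, you correctly flag a point the paper glosses over: at each corner of the rectangle the diagonal pairs $\{\nu_{e_a},\nu_{e_b+d\tilde e_3}\}$ with $e_a\perp e_b$ \emph{do} share the corner cube and are therefore connected in $\mathcal G_2$, so the exact count at size $4(m-1)$ is $|\gamma_i|+O(1)$ rather than $|\gamma_i|$. You then state the total as $-|\gamma_i|\,\myexp(\dots)$ anyway; strictly speaking this $O(1)$ surplus is not of the form $|\gamma_i|\cdot o(\myexp(\dots))$ with a loop-independent $o$, so the lemma as stated is slightly imprecise (in the paper as well). It is harmless downstream, since the surplus is absorbed into the $\mathcal O(\myexp(-16(m-1)\beta))$ of Lemma~\ref{lem:firstorder}, which is all that is ever used. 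Second, for the tail the paper invokes \cite[Lemma~4.7]{firstpaper}, which already packages the ``cluster linked with $\gamma_i$ must sit near $\gamma_i$'' localization and the counting of clusters of given size into a single bound $|\gamma_1|\,C_m\,\myexp(-4\beta s)$; your route via \cite[Lemma~3.2]{firstpaper} together with the ad hoc linking argument is equivalent but less direct.
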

\begin{proof}[Proof of Lemma \ref{lem:firstorder}]
Combine Lemma \ref{lem:firstordertwosummands} and Lemma \ref{lem:firstordersecondsummand} in equation \eqref{eq:firstorderpsi}. The proof follows immediately.
\end{proof}
In the proofs of Lemma \ref{lem:firstordertwosummands} and Lemma \ref{lem:firstordersecondsummand}, we will make use of the following notation. For $j \in \N$, and $i \in [n-1]$, let
\begin{align*}
&\Xi[\{i,i+1\}]_{j} := \{\mathcal{V} \in \Xi[\{i,i+1\}] : |\mathcal{V}| = j \}, ~ \text{and} \\
& \Xi[\{i,i+1\}]_{j^+} := \{\mathcal{V} \in \Xi[\{i,i+1\}] : |\mathcal{V}| \geq j \},
\end{align*}
where $|\mathcal{V}|$ was introduced in \eqref{eq:notation||}. 
\begin{proof}[Proof of Lemma \ref{lem:firstordertwosummands}]
Let $i \in [n-1]$ and let $\mathcal{V} \in \Xi[\{i,i+1\}]$ be such that $n(\mathcal{V}) = 1$. Then there is a vortex $\nu \in \Lambda(B_N)$ such that $\mathcal{V} = \{\nu\}$ is a singleton. In order to obtain estimates on the correlated activity of $\mathcal{V}$, we will use equation~\eqref{eq:Decompositioncorracti}. As $n(\mathcal{V}) = 1 $ implies that $ \mathcal{U}(\mathcal{V})=1$, we have
\begin{equation}
\label{eq:firstorderestimatepsi}
\Psi_\beta(\mathcal{V})=\myexp(-4\beta|\supp~ \nu^+|).
\end{equation}
Hence, the vortex clusters with the largest contribution to $\Psi_\beta(\mathcal{V})$ are the ones with the smallest support. 
By Lemma \ref{lem:sizesupportvortex}, we know that the support of $\nu^+$ has either size $2(m-1), 4(m-1)-2$ or is larger or equal than $4(m-1)$. Furthermore, as $\mathcal{V} = \{\nu\} \in \Xi[\{i,i+1\}]$, it must hold that $\nu(q_i)=\nu(q_{i+1})=1$, where $q_i$ $q_{i+1}$ are the oriented surfaces specified in Definition \ref{Def:Wilsonloops}. 
Now, with these assumptions, note that it is not possible for $\nu^+$ to have a support of size $2(m-1)$. As $\nu^+$ should connect two disjoint oriented surfaces, a vortex cluster of this size cannot interact with both oriented surfaces, see, e.g.,  Figure \ref{fig:nuofminimsize}.
\begin{figure}[ht]

\centering
\begin{tikzpicture}[tdplot_main_coords, scale=0.8]

  \foreach \x in {0,1,2,3}
    \foreach \y in {0,1,2,3}
      \foreach \z in {0,1,2,3}
        \draw[gray, opacity=0.1] (\x,\y,\z) -- (\x+1,\y,\z);

  \foreach \x in {0,1,2,3}
    \foreach \y in {0,1,2,3}
      \foreach \z in {0,1,2,3}
        \draw[gray, opacity=0.1] (\x,\y,\z) -- (\x,\y+1,\z);

  \foreach \x in {0,1,2,3}
    \foreach \y in {0,1,2,3}
      \foreach \z in {0,1,2,3}
        \draw[gray, opacity=0.1] (\x,\y,\z) -- (\x,\y,\z+1);

  \path[pattern=north east lines, pattern color=red]
    (0,1,0) -- (0,3,0) -- (3,3,0) -- (3,1,0) -- cycle; -- cycle;
  
  \draw[red, thick]
    (0,1,0) -- (0,3,0) -- (3,3,0) -- (3,1,0) -- cycle;

  \path[pattern=north east lines, pattern color=blue]
    (0,1,1) -- (0,3,1) -- (3,3,1) -- (3,1,1) -- cycle;

  \draw[blue, thick]
    (0,1,1) -- (0,3,1) -- (3,3,1) -- (3,1,1) -- cycle;

\draw[black, dashed, thick]
  (0,1,1) -- (0,2,1) -- (0,2,2) -- (0,1,2) -- cycle;

 \draw[black, dashed, thick] (0,1,0) -- (0,2,0) -- (0,2,1) -- (0,1,1) -- cycle;

\draw[black, dashed, thick] (0,2,1) -- (1,2,1) -- (1,1,1) -- (0,1,1) -- cycle;

\draw[black, dashed, thick] (-1,2,1) -- (0,2,1) -- (0,1,1) -- (-1,1,1) -- cycle;

  \node[red] at (-0.2,3.2,-0.3) {$\gamma_1$};
  \node[blue] at (-0.2,3.2,1) {$\gamma_2$};
  \node[black] at (-1,1.2,1.8) {$\nu$};

\end{tikzpicture}

\caption{Two neighboring Wilson loops $\gamma_1$, $\gamma_2$ in $\Z^3$ with dashed oriented surfaces $q_1$ marked in red and $q_2$ in blue, together with a vortex cluster $\nu^+$ with support size $2(m-1)$.}
\label{fig:nuofminimsize}

\end{figure}
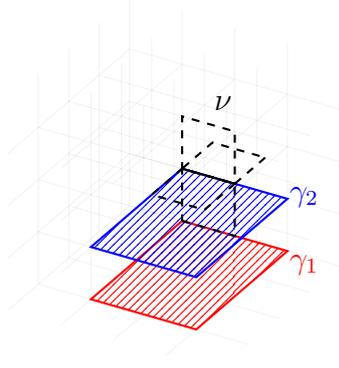
\newline 
If $|\supp ~ \nu^+| = 4(m-1)-2$, then $\nu$ must be one of the two configurations of vortices described in Figure
\ref{fig:nuofsecminimsize}.
\begin{figure}[ht]
\centering
\begin{subfigure}[b]{0.45\textwidth}
\centering
\begin{tikzpicture}[tdplot_main_coords, scale=1]

  \foreach \x in {0,1,2,3}
    \foreach \y in {0,1,2,3}
      \foreach \z in {0,1,2,3}
        \draw[gray, opacity=0.1] (\x,\y,\z) -- (\x+1,\y,\z);

  \foreach \x in {0,1,2,3}
    \foreach \y in {0,1,2,3}
      \foreach \z in {0,1,2,3}
        \draw[gray, opacity=0.1] (\x,\y,\z) -- (\x,\y+1,\z);

  \foreach \x in {0,1,2,3}
    \foreach \y in {0,1,2,3}
      \foreach \z in {0,1,2,3}
        \draw[gray, opacity=0.1] (\x,\y,\z) -- (\x,\y,\z+1);

  \path[pattern=north east lines, pattern color=red]
    (0,1,0) -- (0,3,0) -- (3,3,0) -- (3,1,0) -- cycle; -- cycle;
  
  \draw[red, thick]
    (0,1,0) -- (0,3,0) -- (3,3,0) -- (3,1,0) -- cycle;

  \path[pattern=north east lines, pattern color=blue]
    (0,1,1) -- (0,3,1) -- (3,3,1) -- (3,1,1) -- cycle;

  \draw[blue, thick]
    (0,1,1) -- (0,3,1) -- (3,3,1) -- (3,1,1) -- cycle;

\draw[black, dashed, thick]
  (0,1,1) -- (0,2,1) -- (0,2,2) -- (0,1,2) -- cycle;


\draw[black, dashed, thick] (0,2,1) -- (1,2,1) -- (1,1,1) -- (0,1,1) -- cycle;

\draw[black, dashed, thick] (-1,2,1) -- (0,2,1) -- (0,1,1) -- (-1,1,1) -- cycle;
\draw[black, dashed, thick]
  (0,2,1) -- (-1,2,1) -- (-1,2,0) -- (0,2,0) -- cycle;


\draw[black, dashed, thick] (0,2,1) -- (0,3,1) -- (0,3,0) -- (0,2,0) -- cycle;

\draw[black, dashed, thick] (0,2,1) -- (1,2,1) -- (1,2,0) -- (0,2,0) -- cycle;
  \node[red] at (-0.2,3.2,-0.3) {$\gamma_1$};
  \node[blue] at (-0.2,3.2,1) {$\gamma_2$};
  \node[black] at (-1,1.2,1.8) {$\nu$};

\end{tikzpicture}
\caption{First cluster type $\nu$ s.t. $|\supp~ \nu^+|=4(m-1)-2$.}
\label{fig:loops_a}
\end{subfigure}
\hfill
\begin{subfigure}[b]{0.45\textwidth}
\centering
\begin{tikzpicture}[tdplot_main_coords, scale=1]

  \foreach \x in {0,1,2,3}
    \foreach \y in {0,1,2,3}
      \foreach \z in {0,1,2,3}
        \draw[gray, opacity=0.1] (\x,\y,\z) -- (\x+1,\y,\z);

  \foreach \x in {0,1,2,3}
    \foreach \y in {0,1,2,3}
      \foreach \z in {0,1,2,3}
        \draw[gray, opacity=0.1] (\x,\y,\z) -- (\x,\y+1,\z);

  \foreach \x in {0,1,2,3}
    \foreach \y in {0,1,2,3}
      \foreach \z in {0,1,2,3}
        \draw[gray, opacity=0.1] (\x,\y,\z) -- (\x,\y,\z+1);

  \path[pattern=north east lines, pattern color=red]
    (0,1,0) -- (0,3,0) -- (3,3,0) -- (3,1,0) -- cycle; -- cycle;
  
  \draw[red, thick]
    (0,1,0) -- (0,3,0) -- (3,3,0) -- (3,1,0) -- cycle;

  \path[pattern=north east lines, pattern color=blue]
    (0,1,1) -- (0,3,1) -- (3,3,1) -- (3,1,1) -- cycle;

  \draw[blue, thick]
    (0,1,1) -- (0,3,1) -- (3,3,1) -- (3,1,1) -- cycle;

\draw[black, dashed, thick]
  (0,1,1) -- (0,2,1) -- (0,2,2) -- (0,1,2) -- cycle;


\draw[black, dashed, thick] (0,2,1) -- (1,2,1) -- (1,1,1) -- (0,1,1) -- cycle;

\draw[black, dashed, thick] (-1,2,1) -- (0,2,1) -- (0,1,1) -- (-1,1,1) -- cycle;

 \draw[black, dashed, thick] (0,1,-1) -- (0,2,-1) -- (0,2,0) -- (0,1,0) -- cycle;

\draw[black, dashed, thick] (0,2,0) -- (1,2,0) -- (1,1,0) -- (0,1,0) -- cycle;

\draw[black, dashed, thick] (-1,2,0) -- (0,2,0) -- (0,1,0) -- (-1,1,0) -- cycle;
  \node[red] at (-0.2,3.2,-0.3) {$\gamma_1$};
  \node[blue] at (-0.2,3.2,1) {$\gamma_2$};
  \node[black] at (-1,1.2,1.8) {$\nu$};

\end{tikzpicture}
\caption{Second type of cluster $\nu$ s.t. $|\supp~ \nu^+|=4(m-1)-2$}
\label{fig:loops_b}
\end{subfigure}
\caption{Two neighboring Wilson loops $\gamma_1$, $\gamma_2$ in $\Z^3$ with corresponding oriented surfaces $q_1$ and $q_2$. Drawn in black are the two types of vortex cluster $\nu^+$ with a support size of $4(m-1)-2$.}
\label{fig:nuofsecminimsize}
\end{figure}
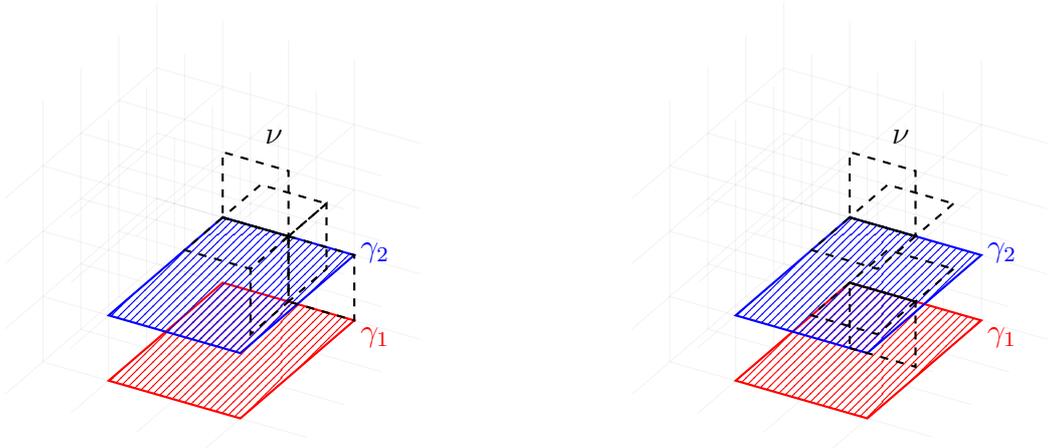
\begin{figure}[ht]
\centering

\begin{tikzpicture}[tdplot_main_coords, scale=0.8]

  \foreach \x in {0,1,2,3}
    \foreach \y in {0,1,2,3}
      \foreach \z in {0,1,2,3}
        \draw[gray, opacity=0.1] (\x,\y,\z) -- (\x+1,\y,\z);

  \foreach \x in {0,1,2,3}
    \foreach \y in {0,1,2,3}
      \foreach \z in {0,1,2,3}
        \draw[gray, opacity=0.1] (\x,\y,\z) -- (\x,\y+1,\z);

  \foreach \x in {0,1,2,3}
    \foreach \y in {0,1,2,3}
      \foreach \z in {0,1,2,3}
        \draw[gray, opacity=0.1] (\x,\y,\z) -- (\x,\y,\z+1);

  \path[pattern=north east lines, pattern color=red]
    (0,1,0) -- (0,3,0) -- (3,3,0) -- (3,1,0) -- cycle; -- cycle;
  
  \draw[red, thick]
    (0,1,0) -- (0,3,0) -- (3,3,0) -- (3,1,0) -- cycle;

  \path[pattern=north east lines, pattern color=blue]
    (0,1,1) -- (0,3,1) -- (3,3,1) -- (3,1,1) -- cycle;

  \draw[blue, thick]
    (0,1,1) -- (0,3,1) -- (3,3,1) -- (3,1,1) -- cycle;

\draw[black, dashed, thick]
  (1,1,1) -- (1,2,1) -- (1,2,2) -- (1,1,2) -- cycle;


\draw[black, dashed, thick] (1,2,1) -- (2,2,1) -- (2,1,1) -- (1,1,1) -- cycle;

\draw[black, dashed, thick] (0,2,1) -- (1,2,1) -- (1,1,1) -- (0,1,1) -- cycle;

 \draw[black, dashed, thick] (1,1,-1) -- (1,2,-1) -- (1,2,0) -- (1,1,0) -- cycle;

\draw[black, dashed, thick] (1,2,0) -- (2,2,0) -- (2,1,0) -- (1,1,0) -- cycle;

\draw[black, dashed, thick] (0,2,0) -- (1,2,0) -- (1,1,0) -- (0,1,0) -- cycle;
  \node[red] at (-0.2,3.2,-0.3) {$\gamma_1$};
  \node[blue] at (-0.2,3.2,1) {$\gamma_2$};
  \node[black] at (0,1.2,1.8) {$\nu$};

\end{tikzpicture}
\caption{Two neighboring Wilson loops $\gamma_1$, $\gamma_2$ in $\Z^3$ with corresponding dashed oriented surfaces $q_1,q_2$. The vortex cluster $\nu^+$ has a support of $4(m-1)-2$, and satisfies $\nu(q_1)=\nu(q_2)=0$.}
\label{fig:thethirdone}
\end{figure}
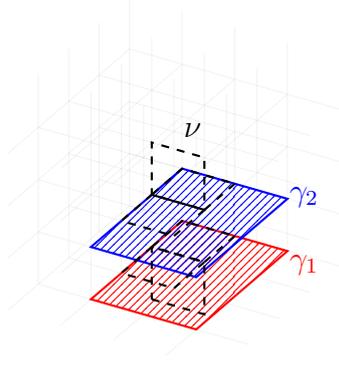
\newline 
The configuration in Figure \ref{fig:loops_a} can't interact with both oriented surfaces for the same reasons as above. For the second configuration, see, e.g., Figure~\ref{fig:loops_b}, one realizes that it can interact with both oriented surfaces $q_i$ and $q_{i+1}$. To see this, let $\nu$ be of this type such that $\supp ~ \nu^+ \cap q_i \neq \emptyset$. Then, by the position of the cluster $\nu$, either $|\nu^+ \cap q_i|=1$ or $|\nu^+ \cap q_i|=2$. Note that it can't be any larger by the definition of the oriented surface $q_i$. If $|\nu^+ \cap q_i|=2$, as in pictured in Figure \ref{fig:thethirdone} (the cluster is not at the boundary of $q_i$), then by definition $\nu(q_i)=0$. Hence one is interested in the case $|\nu^+ \cap q_i|=1$, in which case $\nu(q_i)=1$. In order to obtain $\nu(q_{i+1})$, i.e., $|\nu^+ \cap q_{i+1}|=1$, one sees that there are $|\gamma_i|=|\gamma_{i+1}|$ vortex clusters that satisfy this property : one for every edge $e \in \gamma_i$. This can be observed in Figure~\ref{fig:loops_b}. 

Hence, we obtain 
\begin{equation}
\label{eq:pflemma52firstequation}
\sum_{\substack{\mathcal{V} \in \Xi[\{i,i+1\}]_{4(m-1)-2} \\ n(\mathcal{V})=1}} \Psi_\beta (\mathcal{V}) = |\gamma_i| \myexp(-4(4(m-1)-2)\beta).
\end{equation} 
By Lemma \ref{lem:sizesupportvortex}, the next vortex  $\nu$ of minimal support satisfies \mbox{$|\supp \nu^+|=4(m-1)$}. 
In particular \cite[Lemma 4.7]{firstpaper} proves that there is some constant $C_m>0$, which depends on $m$ but not on the loop size $|\gamma_i|$ such that,
\begin{equation}
\label{eq:pflemma52secondequation}
    \sum_{\substack{\mathcal{V} \in \Xi[\{i,i+1\}]_{4(m-1)^+} \\ n(\mathcal{V})=1}} |\Psi_\beta (\mathcal{V})| \leq |\gamma_1| C_m \myexp(-4(4(m-1)\beta)).
\end{equation}
The claim follows by combining \eqref{eq:pflemma52firstequation} and \eqref{eq:pflemma52secondequation}.    
\end{proof}
\begin{proof}[Proof of Lemma \ref{lem:firstordersecondsummand}]
This proof will follow a similar path as the proof of ~Lemma~\ref{lem:firstordertwosummands}. \newline 
Let $\mathcal{V} \in \Xi$ such that $n(\mathcal{V}) = 2$. In this case, note that $\mathcal{U}(\mathcal{V})=-1$. Such a cluster can have two forms, either $\mathcal{V}=\{\nu,\nu\}$ or $\mathcal{V}=\{\nu_1,\nu_2\}, ~ \nu_1 \neq \nu_2$, where $\nu,\nu_1,\nu_2 \in \Lambda(B_N)$. \newline 
If $\mathcal{V}=\{\nu,\nu\}$, one deduces that for any 2-form $q$,
\begin{align*}
&\nu(q) = 0 ~~ \text{implies} ~~ \mathcal{V}(q)=0, \\
~~ \text{while} ~ &\nu(q)=1 ~~ \text{implies} ~~\mathcal{V}(q)= \nu(q)+\nu(q) =0,
\end{align*}
which implies that $\mathcal{V} \notin \Xi[\{i,i+1\}]$.
Hence, if $\mathcal{V} \in \Xi[\{i,i+1\}]$, we must have $\mathcal{V}=\{\nu_1,\nu_2\}$. 
Here by definition, as $n(\mathcal{V})=2$ implies $ \mathcal{U}(\mathcal{V})=-1$, equation \eqref{eq:Decompositioncorracti} gives that
\[
\Psi_\beta(\mathcal{V})= - \myexp\big(-4\beta(|\supp ~ \nu_1^+|+|\supp ~ \nu_2^+|)\big).
\]
In the case in which both vortex clusters have minimal support, which mean, $|\supp\nu_1^+| = |\supp\nu_2^+| = 2(m-1)$, there are $|\gamma_i|$ translations of $\mathcal{V} \in \Xi[\{i,i+1\}]$ for which $\mathcal{V}(q_i)=\mathcal{V}(q_{i+1})=1$. Namely one $\mathcal{V}=\{\nu_1,\nu_2\}$ for each $e \in \gamma_i$ as pictured in ~Figure~\ref{fig:anewfigurewhosenameidk} with the associated green edge $e \in \gamma_1$. Observe that if $\nu'_1$ and $\nu'_2$ are not on top of each other but still interacting with $q_i$ and $q_{i+1}$, then $\mathcal{V}=\{\nu'_1,\nu'_2\} \notin \Xi$, as $\nu'_1 \nsim \nu'_2$ in the graph $\mathcal{G}_2$.
By the same argument as in the proof of Lemma~\ref{lem:firstordertwosummands}, there is no other possibility in which both $\nu_1$ and $\nu_2$ have minimal support and satisfy $\mathcal{V}(q_i)=\mathcal{V}(q_{i+1})=1$. Hence, one obtains that
\[
\sum_{\substack{\mathcal{V} \in \Xi[\{i,i+1\}]_{4(m-1)} \\ n(\mathcal{V})=2}} \Psi_\beta (\mathcal{V}) = - |\gamma_i| \myexp(-4(4(m-1))\beta).
\]
\begin{figure}[ht]
\centering

\begin{tikzpicture}[tdplot_main_coords, scale=1]

  \foreach \x in {0,1,2,3}
    \foreach \y in {0,1,2,3}
      \foreach \z in {0,1,2,3}
        \draw[gray, opacity=0.1] (\x,\y,\z) -- (\x+1,\y,\z);

  \foreach \x in {0,1,2,3}
    \foreach \y in {0,1,2,3}
      \foreach \z in {0,1,2,3}
        \draw[gray, opacity=0.1] (\x,\y,\z) -- (\x,\y+1,\z);

  \foreach \x in {0,1,2,3}
    \foreach \y in {0,1,2,3}
      \foreach \z in {0,1,2,3}
        \draw[gray, opacity=0.1] (\x,\y,\z) -- (\x,\y,\z+1);

  \path[pattern=north east lines, pattern color=red]
    (0,1,0) -- (0,3,0) -- (3,3,0) -- (3,1,0) -- cycle; -- cycle;
  
  \draw[red, thick]
    (0,1,0) -- (0,3,0) -- (3,3,0) -- (3,1,0) -- cycle;

  \path[pattern=north east lines, pattern color=blue]
    (0,1,1) -- (0,3,1) -- (3,3,1) -- (3,1,1) -- cycle;

  \draw[blue, thick]
    (0,1,1) -- (0,3,1) -- (3,3,1) -- (3,1,1) -- cycle;

\draw[gray, dashed, thick]
  (0,1,1) -- (0,2,1) -- (0,2,2) -- (0,1,2) -- cycle;

 \draw[gray, dashed, thick] (0,1,0) -- (0,2,0) -- (0,2,1) -- (0,1,1) -- cycle;

\draw[gray, dashed, thick] (0,2,1) -- (1,2,1) -- (1,1,1) -- (0,1,1) -- cycle;

\draw[gray, dashed, thick] (-1,2,1) -- (0,2,1) -- (0,1,1) -- (-1,1,1) -- cycle;
\draw[black, dashed, thick]
  (0,1,0) -- (0,2,0) -- (0,2,1) -- (0,1,1) -- cycle;

 \draw[black, dashed, thick] (0,1,-1) -- (0,2,-1) -- (0,2,0) -- (0,1,0) -- cycle;

\draw[black, dashed, thick] (0,2,0) -- (1,2,0) -- (1,1,0) -- (0,1,0) -- cycle;

\draw[black, dashed, thick] (-1,2,0) -- (0,2,0) -- (0,1,0) -- (-1,1,0) -- cycle;
  \node[red] at (-0.2,3.2,-0.3) {$\gamma_1$};
  \node[blue] at (-0.2,3.2,1) {$\gamma_2$};
  \node[gray] at (-1,1.2,1.8) {$\nu_1$};
  \node[black] at (-0.2,3,0.4) {$\nu_2$};
   \node[ForestGreen] at (-0.2,3.4,0.13) {$e$};
  \draw[green!70!black, ultra thick]
    (0,2,0) -- (0,1,0);
\end{tikzpicture}
\caption{Two neighboring Wilson loops $\gamma_1$, $\gamma_2$ in $3$ with corresponding dashed oriented surfaces $q_1,q_2$. Each vortex cluster $\nu_1^+,\nu_2^+$ has a support size of $2(m-1)$.}
\label{fig:anewfigurewhosenameidk}
\end{figure}
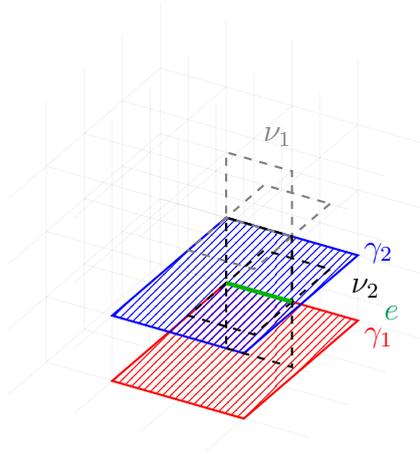 
\newline
Now consider any other $\mathcal{V}=\{\nu_1,\nu_2\}$ such that $\mathcal{V}(\gamma_i)= \mathcal{V}(\gamma_{i+1}) = 1$. Then by~Lemma~\ref{lem:sizesupportvortex} it holds that
\[
|\mathcal{V}| = |\supp ~ \nu_1^+|+|\supp ~ \nu_2^+| \geq 2(m-1)+4(m-1)-2 = 6(m-1)-2.
\]
Similarly if $n(\mathcal{V}) \geq 3$, one obtains
\[
|\mathcal{V}| = \sum_{\nu \in \mathcal{V}} n_\mathcal{V}(\nu) |\supp ~ \nu| \geq 3 \cdot 2(m-1) = 6(m-1)  \geq 6(m-1)-2.
\] 
\newline 
For $\mathcal{V} \in \Xi$ such that $|\mathcal{V}| \geq 6(m-1)-2$, we apply as previously \cite[Lemma 4.7]{firstpaper}, which gives a constant $\tilde{C}_m$ which only depends on $m$ such that
\[
\sum_{\substack{\mathcal{V} \in \Xi[\{i,i+1\}]_{6(m-1)-2^+} \\ n(\mathcal{V}) \geq 2}} \Psi_\beta (\mathcal{V}) \leq |\gamma_1| \tilde{C}_m \myexp\big(-4(6(m-1)-2)\beta\big).
\]
This concludes the proof.
\end{proof}
\begin{Lemma}
    \label{lem:restorders}
    Consider Ising lattice gauge theory with parameter $\beta \geq \beta_0$ and the set of loops $(\gamma_i)_{i \in [n]}$ from Definition \ref{Def:Wilsonloops}. Now, consider any set $I \subset [n]$ such that
\[
I \in \mathcal{I}_n:= \mathcal{P}([n]) \setminus \{\{1\},...,\{n\},\{1,2\}, \{2,3\}, ...,\{n-1,n\}\},
\]
where $\mathcal{P}$ denotes the power set.
Then, for the loops $\gamma_1,...,\gamma_n$ from Definition \ref{Def:Wilsonloops}, we have
    \begin{equation*}
        \Psi_\beta[I] = |\gamma_1|   \mathcal{O}\big(\myexp{(-4(4(m-1))\beta)}\big),
    \end{equation*}
    where the term $\mathcal{O}((\myexp{(-4(4(m-1))\beta)})$ is independent of the loop's size $|\gamma_1|$, and the asymptotics is in $\beta$. 
\end{Lemma}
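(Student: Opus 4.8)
The plan is to combine a geometric classification of the vortex clusters that can lie in $\Xi[I]$ with the summability estimate from \cite[Lemma~4.7]{firstpaper} that was already used in the proof of Lemma~\ref{lem:firstordertwosummands}. Throughout we may assume $I\neq\emptyset$, since $\Psi_\beta[\emptyset]$ never enters the expression \eqref{eq:newap} for $U_n^W$ (nor the one for $V_n^-$): by Lemma~\ref{lem:comesatend}(a) one has $c_{\emptyset,\mathfrak{P}}=0$ for every $\mathfrak{P}\in\mathfrak{F}_n$.

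The crux is to show that \emph{every} $\mathcal{V}\in\Xi[I]$ with $I\in\mathcal{I}_n$ satisfies $|\mathcal{V}|\ge 4(m-1)$. If $n(\mathcal{V})\ge 2$ this is immediate from Lemma~\ref{lem:sizesupportvortex}, since then $|\mathcal{V}|\ge n(\mathcal{V})\cdot 2(m-1)\ge 4(m-1)$. If $n(\mathcal{V})=1$, write $\mathcal{V}=\{\nu\}$; by Lemma~\ref{lem:sizesupportvortex} either $|\supp\nu^+|\ge 4(m-1)$ and we are done, or $|\supp\nu^+|\in\{2(m-1),\,4(m-1)-2\}$. In this last case I claim that the set $\{j\in[n]:\nu(q_j)=1\}$ is either empty, a single index, or a pair of consecutive indices; since $\{j:\mathcal{V}(q_j)=1\}=I\in\mathcal{I}_n$ and $I\neq\emptyset$, while $\mathcal{I}_n$ contains no singletons and no consecutive pairs, this yields a contradiction. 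To see the claim, recall that in Definition~\ref{Def:Wilsonloops} the surfaces $q_1,\dots,q_n$ are successive $d\tilde e_3$-translates of one flat rectangle, so $q_j$ is made of $\tilde e_1\tilde e_2$-oriented plaquettes lying at third coordinate $z_j$, with $z_{j+1}=z_j+1$; since $\nu(q_j)=\sum_{p\in q_j}\nu(p)$, only the $\tilde e_1\tilde e_2$-oriented plaquettes of $\supp\nu$ sitting at height $z_j$ can contribute to $\nu(q_j)$. Using the classification of the vortices of support $2(m-1)$ and $4(m-1)-2$ (Lemma~\ref{lem:sizesupportvortex}, and Figures~\ref{fig:THEBIGFIGURE} and \ref{fig:nuofsecminimsize}), one checks that a minimal vortex and the first of the two $(4(m-1)-2)$-shapes have all of their $\tilde e_1\tilde e_2$-plaquettes at a single height, while the second $(4(m-1)-2)$-shape has them at exactly two consecutive heights. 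In every case $\{j:\nu(q_j)=1\}$ is contained in a single index or a consecutive pair, proving the claim.

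Granting this, the estimate follows quickly. Fix any $i_0\in I$. Then every $\mathcal{V}\in\Xi[I]$ satisfies $\mathcal{V}(q_{i_0})=1$ and, by the previous paragraph, $|\mathcal{V}|\ge 4(m-1)$, so
\[
\bigl|\Psi_\beta[I]\bigr|\le \sum_{\mathcal{V}\in\Xi[I]}\bigl|\Psi_\beta(\mathcal{V})\bigr|\le \sum_{\substack{\mathcal{V}\in\Xi:\ \mathcal{V}(q_{i_0})=1\\ |\mathcal{V}|\ge 4(m-1)}}\bigl|\Psi_\beta(\mathcal{V})\bigr|.
\]
The right-hand side is exactly of the form bounded via \cite[Lemma~4.7]{firstpaper} in the proof of Lemma~\ref{lem:firstordertwosummands}: a cluster interacting with $q_{i_0}$ must be anchored near one of the $|\gamma_{i_0}|=|\gamma_1|$ boundary edges of $q_{i_0}$, which produces a constant $C_m$ depending only on $m$ (and not on $|\gamma_1|$) such that the right-hand side is at most $|\gamma_1|\,C_m\,\myexp(-4(4(m-1))\beta)$. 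As $n$ is fixed this gives $\Psi_\beta[I]=|\gamma_1|\,\mathcal{O}(\myexp(-4(4(m-1))\beta))$ with implied constant independent of $|\gamma_1|$, as desired. The only nontrivial point is the finite case analysis in the second paragraph: one must make sure that, for the stacked flat rectangles of Definition~\ref{Def:Wilsonloops}, none of the finitely many vortex shapes of support below $4(m-1)$ can meet surfaces at more than two — and necessarily consecutive — heights. This is where the geometry of the loop configuration is genuinely used; the remainder is a direct reuse of the input already invoked for Lemma~\ref{lem:firstordertwosummands}.
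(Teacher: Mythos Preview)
Your proof is correct and follows essentially the same strategy as the paper's: establish that every $\mathcal{V}\in\Xi[I]$ with nonempty $I\in\mathcal{I}_n$ has $|\mathcal{V}|\ge 4(m-1)$, then invoke \cite[Lemma~4.7]{firstpaper}. The paper argues this via the observation $\max I-\min I\ge 2$, so a cluster in $\Xi[I]$ must link surfaces at $\tilde e_3$-distance at least two, which rules out the small shapes; you instead classify directly which index sets $\{j:\nu(q_j)=1\}$ the small vortices can produce and show these are always contained in a consecutive pair. Your version is somewhat more careful (you spell out why the $\tilde e_1\tilde e_2$-plaquettes of the small vortices lie at at most two consecutive heights), and you explicitly dispose of $I=\emptyset$ via Lemma~\ref{lem:comesatend}(a), which the paper leaves implicit; but the underlying idea is the same.
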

\begin{proof}
    Let $I \in \mathcal{I}_n$ and $\mathcal{V} \in \Xi[I]$.
    Then $\max I - \min I \geq 2$, hence, $\mathcal{V}$ is a vortex cluster which connects oriented surfaces that are separated by a distance of at least two (in the direction $\tilde{e}_3$) plaquettes from each other. Hence, by a similar argumentation as in the proof of Lemma \ref{lem:firstordertwosummands}, we deduce the following.  If $n(\mathcal{V})=1$, it must hold that  $|\supp ~ \nu^+| \geq 4(m-1)$ by Lemma \ref{lem:sizesupportvortex}, as it is not possible for the vortex clusters satisfying $|\supp ~ \nu^+| \in \{2(m-1), 4(m-1)-2\}$ to connect oriented surfaces with a distance greater than two from each other. One can observe this in Figure \ref{fig:threeloops1} for the case $|\supp ~ \nu^+| = 4(m-1)-2$, in which case one is just able to combine the two oriented surfaces $q_2$ and $q_3$. 
 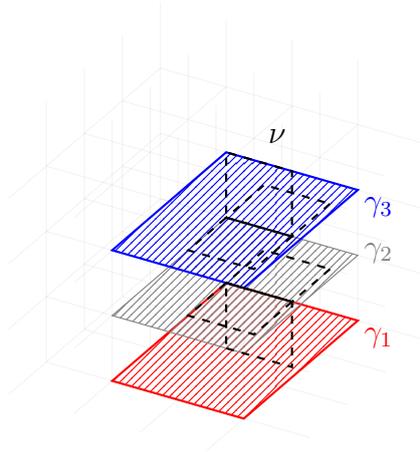
\begin{figure}[ht]
\centering
\begin{tikzpicture}[tdplot_main_coords, scale=1]

  \foreach \x in {0,1,2,3}
    \foreach \y in {0,1,2,3}
      \foreach \z in {0,1,2,3}
        \draw[gray, opacity=0.1] (\x,\y,\z) -- (\x+1,\y,\z);

  \foreach \x in {0,1,2,3}
    \foreach \y in {0,1,2,3}
      \foreach \z in {0,1,2,3}
        \draw[gray, opacity=0.1] (\x,\y,\z) -- (\x,\y+1,\z);

  \foreach \x in {0,1,2,3}
    \foreach \y in {0,1,2,3}
      \foreach \z in {0,1,2,3}
        \draw[gray, opacity=0.1] (\x,\y,\z) -- (\x,\y,\z+1);

  \path[pattern=north east lines, pattern color=red]
    (0,1,0) -- (0,3,0) -- (3,3,0) -- (3,1,0) -- cycle; -- cycle;
  
  \draw[red, thick]
    (0,1,0) -- (0,3,0) -- (3,3,0) -- (3,1,0) -- cycle;

  \path[pattern=north east lines, pattern color=gray]
    (0,1,1) -- (0,3,1) -- (3,3,1) -- (3,1,1) -- cycle;

  \draw[gray]
    (0,1,1) -- (0,3,1) -- (3,3,1) -- (3,1,1) -- cycle;

  \path[pattern=north east lines, pattern color=blue]
    (0,1,2) -- (0,3,2) -- (3,3,2) -- (3,1,2) -- cycle;

  \draw[blue, thick]
    (0,1,2) -- (0,3,2) -- (3,3,2) -- (3,1,2) -- cycle;

\draw[black, dashed, thick]
  (0,1,1) -- (0,2,1) -- (0,2,2) -- (0,1,2) -- cycle;


\draw[black, dashed, thick] (0,2,1) -- (1,2,1) -- (1,1,1) -- (0,1,1) -- cycle;

\draw[black, dashed, thick] (-1,2,1) -- (0,2,1) -- (0,1,1) -- (-1,1,1) -- cycle;

 \draw[black, dashed, thick] (0,1,-1) -- (0,2,-1) -- (0,2,0) -- (0,1,0) -- cycle;

\draw[black, dashed, thick] (0,2,0) -- (1,2,0) -- (1,1,0) -- (0,1,0) -- cycle;

\draw[black, dashed, thick] (-1,2,0) -- (0,2,0) -- (0,1,0) -- (-1,1,0) -- cycle;
  \node[red] at (-0.2,3.2,-0.3) {$\gamma_1$};
  \node[blue] at (-0.2,3.2,1.7) {$\gamma_3$};
  \node[black] at (-1,1.2,1.8) {$\nu$};
\node[gray] at (-0.2,3.2,1) {$\gamma_2$};
\end{tikzpicture}
\caption{Three neighboring Wilson loops $\gamma_1, \gamma_2,\gamma_3$ in $\Z^3$. The vortex cluster $\nu^+$ has a support of $4(m-1)-2$ and can only interact with $q_2$ and $q_3$.}
\label{fig:threeloops1}
\end{figure}
\newline 
 Now, in the case $n(\mathcal{V}) \ge 2$. One has that
    \[
    |\mathcal{V}| \geq 2(m-1)+2(m-1) = 4(m-1).
    \] 
    Hence \cite[Lemma 4.7]{firstpaper} gives the following inequality,
    \[
    \sum_{\mathcal{V} \in \Xi[I]} \Psi_\beta (\mathcal{V}) \leq |\gamma_1|\hat{C}_m \myexp(-4(4(m-1))\beta),
    \]
    for some constant $\hat{C}_m$ only depending on the dimension $m$.
\end{proof}
\subsection{Proof of Theorem \ref{thm:1}}
Recall the construction of the Wilson loops $(\gamma_i)_{i \in [n]}$ from Definition \ref{Def:Wilsonloops}. 
In order to prove Theorem \ref{thm:1}, we will prove that
\begin{equation*}
\label{eq:Unmaintoprove}
   |V_n^-(W_\gamma)| < 1.
\end{equation*}
Using \eqref{eq:decompv} the claim that $U_n^W(W_\gamma) >0$ will directly follow. To this end, we define the following quantity, which will be used in the proof.
\begin{equation}
\label{eq:DefSn}
\mathscr{S}(n) := \sum_{\substack{\mathfrak{P} \in \mathfrak{F}_n \setminus\{[n]\}\\ |\mathfrak{P}| ~~ \text{even}}} (|\mathfrak{P}|-1)!
\end{equation}
We can compute $\mathscr{S}(n)$ recursively, see, e.g., \cite[Chapter 6]{bookstirling}.  
However, for the following proof, we simply need to know that $\mathscr{S}(n)$ is finite for all $n$, which is clear by the definition. 
\begin{proof}[Proof of Theorem \ref{thm:1}]
    By Lemma \ref{lem:firstorder}, 
    there exist functions, $(f_i)_{i \in [n-1]}$, independent of the loop's size $|\gamma_1|$, such that for any $i \in [n-1]$,
\begin{equation}
\label{eq:pfeq1}
		\frac{\Psi_\beta[\{i,i+1\}]}{\myexp{(-4(4(m-1)-2)\beta)}} = |\gamma_1| (1+f_i(\beta)), ~~ \text{where} ~~ f_i(\beta) = \mathcal{O}(\myexp(-8\beta)). 
\end{equation}
At the same time, by Lemma \ref{lem:restorders}, there exist functions, $(f_I)_{I \in \mathcal{I}_n}$, independent of the loop's size, such that for any $I \in \mathcal{I}_n$,
\begin{equation}
\label{eq:pfeq2}
\frac{\Psi_\beta[I] }{\myexp{(-4(4(m-1)-2)\beta)}} = |\gamma_1|  f_I(\beta), ~~~ f_I(\beta) = \mathcal{O}(\myexp(-8\beta)), 
\end{equation}
where the asymptotics are for large values of $\beta$. \newline
Now, as $(f_i)_{i \in [n-1]}$ and $(f_I)_{I \in \mathcal{I}_n}$ do not depend on the loop's size $|\gamma_1|$, we can define $\beta_{n,m}^*$ as the smallest value such that for all $\beta \geq \beta_{n,m}^*$  and for all $i \in [n-1]$ the following is satisfied:  
\begin{equation}
\tag{A}
\label{eq:endA}
2\Psi_\beta[\{i,i+1\}] - \sum_{I \in \mathcal{I}_n} n|\Psi_\beta[I] | \geq \frac{|\gamma_1|}{2} \myexp(-4(4(m-1)-2)\beta).
\end{equation} 
To such $\beta$, we make the loops $(\gamma_i)_{i \in [n]}$ large enough so that
\begin{equation}
\tag{B}
\label{eq:endB}
|\gamma_1| \myexp(4(4(m-1)-2)\beta) > \mylog(\mathscr{S}(n)).
\end{equation}
Consider now any $\mathfrak{P} \in \mathfrak{F}_n$ such that $|\mathfrak{P}|$ is even. Then, by Corollary \ref{cor:firstcorollary}, we have that
\[
 b_{\mathfrak{P}} = - (\mathfrak{P}-1)! \myexp\Big(-2 \sum_{I \subset [n]} (c_{I,\mathfrak{P}} - \mathbbm{1}\{|I| ~ \text{odd}\}) \Psi[I] \Big).
\]
From Lemma \ref{lem:comesatend}, we know that
\begin{equation}
\tag{C}
\label{eq:endC}
c_{I,\mathfrak{P}} \leq n ~ \text{for any} ~ I \subseteq [n], ~ \text{and} ~ c_{\{i,i+1\},\mathfrak{P}} \neq 0 \iff c_{\{i,i+1\},\mathfrak{P}} = 2. 
\end{equation}
Now, consider any element of the partition $\mathfrak{P} \in \mathfrak{F}_n \setminus\{[n]\}$. By Lemma \ref{lem:comesatend}(f), we know that there exists $i \in [n-1]$ such that $c_{\{i,i+1\},\mathfrak{P}}>0$.  
As $\Psi_{\beta}[\{i,i+1\}] > 0$ for any $i \in [n-1]$ by \eqref{eq:endA}, we have that,
\begin{align*}
&~\sum_{I \subset [n]} (c_{I,\mathfrak{P}}-\mathbbm{1}\{|I| ~ \text{odd}\}) \Psi_\beta[I] \\
=&\sum_{i \in [n-1]} c_{\{i,i+1\},\mathfrak{P}} \Psi_\beta[\{i,i+1\}]  + \sum_{I \in \mathcal{I}_n} (c_{I,\mathfrak{P}}-\mathbbm{1}\{|I| ~ \text{odd}\}) \Psi_\beta[I] \\ \geq & ~~2\Psi_\beta[\{i,i+1\}] - \sum_{I \in \mathcal{I}_n} n|\Psi_\beta[I]| > \frac{1}{2}\mylog(\mathscr{S}(n)).
\end{align*}
Where the first equality comes from Lemma \ref{lem:comesatend}(c) and the first inequality from \eqref{eq:endC}. Finally, the last inequality comes from the combination of \eqref{eq:endA} and \eqref{eq:endB}.
It implies that,
\begin{equation}
\tag{D}
\label{eq:endD}
\myexp \Big(- 2 \sum_{I \subset [n]} (c_{I,\mathfrak{P}}-\mathbbm{1}\{|I| ~ \text{odd}\}) \Psi_\beta[I] \Big) < \frac{1}{\mathscr{S}(n)}.
\end{equation}
As $\mathfrak{P} \in \mathfrak{F}_n, ~ |\mathfrak{P}| ~ \text{even}$, was arbitrary, we apply inequality \eqref{eq:endD} to all such $\mathfrak{P}$ in the expression of $V_n^-(W_\gamma)$ from Corollary \ref{cor:firstcorollary} to obtain
\[
|V_n^-(W_\gamma)| = \sum_{\substack{\mathfrak{P} \in \mathfrak{F}_n \\ |\mathfrak{P}| ~ \text{even}}} |b_{\mathfrak{P}}| < \sum_{\substack{\mathfrak{P} \in \mathfrak{F}_n\setminus\{[n]\}  \\ |\mathfrak{P}| ~ \text{even}}} (|\mathfrak{P}|-1)! \frac{1}{\mathscr{S}(n)} = \frac{1}{\mathscr{S}(n)} \mathscr{S}(n) = 1.
\]
Here, the inequality comes from \eqref{eq:endD} and the second equality from the definition of $\mathscr{S}(n)$. This concludes the proof by \eqref{eq:decompv}.
\end{proof}
\newpage 

\appendix
\section{Proof of Lemma \ref{lem:abouttheloopdec}}
\label{App:A}
 We seek the smallest number of edges $n$ such that $e_1,...,e_n$ form a loop $\gamma$ which can be decomposed in at least two distinct ways: $\gamma=\gamma_1+\gamma_2=\gamma_1'+\gamma_2'$, where $\gamma_1,\gamma_2 ~\text{and} ~\gamma_1',\gamma_2'$ denote different closed loops. In the case $m \geq 3$, one needs $n \geq 10$ to achieve this, notably by taking advantage of the third dimension, as one can observe in the proof of Theorem \ref{thm:2} (d). We will prove that in the two-dimensional case, one needs $n \geq 16$ edges. 
  Note, that for two distinct loops $\gamma_1,\gamma_2$, we say that $\gamma_2$ \textit{touches} an edge $e \in \gamma_1$, if there exists $\tilde{e} \in \gamma_2$ such that $\partial e \cap \partial \tilde{e} \neq \emptyset$. If $\gamma_2$ touches an edge $e \in \gamma_1$, then we say that $\gamma_2$ \textit{crosses} $\gamma_1$ if there is no corner of $\gamma_1$ at the intersection point. 
  With these definitions established, we can prove Lemma \ref{lem:abouttheloopdec}. 
\begin{proof}[Proof of Lemma \ref{lem:abouttheloopdec}]
    Denote the first decomposition of $\gamma$ by $\gamma_1+\gamma_2$, where $\gamma_1$ and $\gamma_2$ are two closed loops. Then, the loop $\gamma_2$ must be situated around $\gamma_1$ in the following sense: there exists $e \in \gamma_1$ such that either one or both boundary points of $e$ are touched by  $\gamma_2$. 
    Such an edge $e$ must exist to be able to "leave" $\gamma_1$ and be in another decomposition of $\gamma$, i.e., $e\in \gamma_1'~ \text{or}~\gamma_2'$.
    \newline 
    \textit{Case 1.} Assume there is $e \in \gamma_1$ such that both of its boundaries are traversed by $\gamma_2$, as depicted in Figure \ref{fig:PfFig0}. 
    \begin{figure}[H]
    \centering
\tdplotsetmaincoords{60}{120}

\begin{tikzpicture}[scale=0.55, every node/.style={font=\small}]
  \draw[step=1cm, very thin, gray!40] (0,0) grid (5,5);

\draw[thick, red] (2,2)-- (3,2);
  \node[red] at (2.5,1.6) {$e$};
\draw[dashed, red] (2,2) -- (2,0);
\draw[dashed, red] (3,2) -- (3,0);
\node[red] at (3.3,0.9) {$\gamma_1$};
\draw[thick,blue] (4,2) -- (3,2);
\draw[thick,blue] (3,2) -- (3,3);
\draw[dashed,blue] (2,3) -- (3,3);
\draw[thick,blue] (2,3) -- (2,2);
\draw[thick,blue] (2,2) -- (1,2);
\draw[dashed,blue] (1,3) -- (1,2);
\draw[dashed,blue] (4,2) -- (4,3);
\node[blue] at (3.5,2.5) {$\gamma_2$};
\draw (2,2) node[fill=black!60,circle,inner sep=1.2pt] {};
\draw (3,2) node[fill=black!60,circle,inner sep=1.2pt] {};

  \foreach \x in {0,...,5}
    \foreach \y in {0,...,5}
      \fill[gray!40] (\x,\y) circle (0.03);

\end{tikzpicture}

    \caption{Configuration type in case 1, where both boundaries of $e \in \gamma_1$ get touched by $\gamma_2$.}
    \label{fig:PfFig0}
\end{figure}
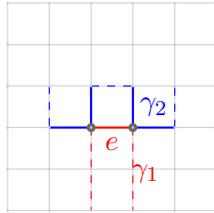
    Then one searches for the smallest loop $\gamma=\gamma_1+\gamma_2$ that satisfies the following property. There is an edge $e$ in the loop $\gamma_1$ such that a second loop $\gamma_2$ goes through both boundaries of $e$ without containing $e$. Clearly $\gamma_1$ contains at least 4 edges, which is the smallest loop, and $\gamma_2$ contains at least 6 edges. In this case we need to check that for all $|\gamma_1| \in \{4,6,8\}$ it holds that $|\gamma_1|+|\gamma_2|\geq 16$. 
    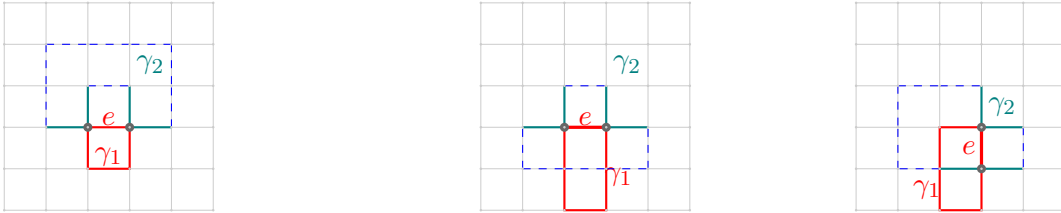
\begin{figure}[H]
    \centering
    \begin{subfigure}[b]{0.32\textwidth}
        \begin{tikzpicture}[scale=0.55, every node/.style={font=\small}]
  \draw[step=1cm, very thin, gray!40] (0,0) grid (5,5);

\draw[thick, red] (2,2) rectangle (3,1);
\node[red] at (2.5,2.2) {$e$};
\node[red] at (2.5,1.3) {$\gamma_1$};
\draw[thick,teal] (4,2) -- (3,2);
\draw[thick,teal] (3,2) -- (3,3);
\draw[dashed,blue] (2,3) -- (3,3);
\draw[thick,teal] (2,3) -- (2,2);
\draw[thick,teal] (2,2) -- (1,2);
\draw[dashed,blue] (1,3) -- (1,2);
\draw[dashed,blue] (4,2) -- (4,3);
\draw[dashed,blue] (4,4) -- (4,3);
\draw[dashed,blue] (4,4) -- (3,4);
\draw[dashed,blue] (3,4) -- (2,4);
\draw[dashed,blue] (1,4) -- (2,4);
\draw[dashed,blue] (1,4) -- (1,3);
\node[teal] at (3.5,3.5) {$\gamma_2$};
\draw (2,2) node[fill=black!60,circle,inner sep=1.2pt] {};
\draw (3,2) node[fill=black!60,circle,inner sep=1.2pt] {};

  \foreach \x in {0,...,5}
    \foreach \y in {0,...,5}
      \fill[gray!40] (\x,\y) circle (0.03);

\end{tikzpicture}

    \caption{Configuration such that $|\gamma_1|=4$.}
    \label{fig:PfFig1a}
    \end{subfigure}
\hfill
\begin{subfigure}[b]{0.32\textwidth}
    \centering

\begin{tikzpicture}[scale=0.55, every node/.style={font=\small}]
  \draw[step=1cm, very thin, gray!40] (0,0) grid (5,5);

\draw[thick, red] (2,2) rectangle (3,0);
\draw[very thick, red] (2,2) -- (3,2);
\node[red] at (2.5,2.2) {$e$};
\node[red] at (3.3,0.8) {$\gamma_1$};
\draw[thick,teal] (4,2) -- (3,2);
\draw[thick,teal] (3,2) -- (3,3);
\draw[dashed,blue] (2,3) -- (3,3);
\draw[thick,teal] (2,3) -- (2,2);
\draw[thick,teal] (2,2) -- (1,2);
\draw[dashed,blue] (1,1) -- (1,2);
\draw[dashed,blue] (1,1) -- (2,1);
\draw[dashed,blue] (2,1) -- (3,1);
\draw[dashed,blue] (3,1) -- (4,1);
\draw[dashed,blue] (4,1) -- (4,2);
\node[teal] at (3.5,3.5) {$\gamma_2$};
\draw (2,2) node[fill=black!60,circle,inner sep=1.2pt] {};
\draw (3,2) node[fill=black!60,circle,inner sep=1.2pt] {};

  \foreach \x in {0,...,5}
    \foreach \y in {0,...,5}
      \fill[gray!40] (\x,\y) circle (0.03);

\end{tikzpicture}

    \caption{First type of configuration such that $|\gamma_1|=6$.}
    \label{fig:PfFig1b}
\end{subfigure}
\begin{subfigure}[b]{0.32\textwidth}
    \centering
\begin{tikzpicture}[scale=0.55, every node/.style={font=\small}]
  \draw[step=1cm, very thin, gray!40] (0,0) grid (5,5);

\draw[thick, red] (2,2) rectangle (3,0);
\draw[very thick, red] (3,1) -- (3,2);
\node[red] at (2.7,1.5) {$e$};
\node[red] at (1.7,0.5) {$\gamma_1$};
\draw[thick,teal] (4,2) -- (3,2);
\draw[thick,teal] (3,2) -- (3,3);
\draw[thick,teal] (3,1) -- (2,1);
\draw[thick,teal] (3,1) -- (4,1);
\draw[dashed,blue] (1,3) -- (1,2);
\draw[dashed,blue] (4,2) -- (4,1);
\draw[dashed,blue] (2,1) -- (1,1);
\draw[dashed,blue] (1,1) -- (1,2);
\draw[dashed,blue] (1,3) -- (2,3);
\draw[dashed,blue] (2,3) -- (3,3);
\node[teal] at (3.5,2.5) {$\gamma_2$};
\draw (3,1) node[fill=black!60,circle,inner sep=1.2pt] {};
\draw (3,2) node[fill=black!60,circle,inner sep=1.2pt] {};

  \foreach \x in {0,...,5}
    \foreach \y in {0,...,5}
       \fill[gray!40] (\x,\y) circle (0.03);

\end{tikzpicture}

    \caption{Second type of configuration such that $|\gamma_1|=6$}
    \label{fig:PfFig1c}
\end{subfigure}
\caption{The subfigures illustrate the types of configurations in Case 1, when $|\gamma_1| \in \{4,6\}$. Each highlight the edge $e \in \gamma_1$  touched twice by $\gamma_2$. The teal edges are those that $\gamma_2$ needs, to cross the boundaries of $e$. The dashed blue edges show a shortest path to close $\gamma_2$. In all three cases, $|\gamma_1|+|\gamma_2| \geq 16$.}
\end{figure}
   If $|\gamma_1|=4$, then one has no other choice than having the four teal edges in Figure~\ref{fig:PfFig1a}, which will be edges in $\gamma_2$. In this case, one needs at least $8$ additional edges (blue dashed) to connect the teal edges into a loop, which gives $|\gamma_2| \geq 12$. Hence, $|\gamma_1|+|\gamma_2| \geq 16$. \newline 
    If $|\gamma_1|=6$, and $e \in \gamma_1$ is located as in Figure \ref{fig:PfFig1b}, then $|\gamma_2| \geq 10$ with a similar argumentation as in Figure \ref{fig:PfFig1a}. If $e \in \gamma_1$ appears as in Figure \ref{fig:PfFig1c}, then $\gamma_2$ must contain the four teal edges. The smallest configuration that closes the loop yields $|\gamma_2| = 10$. In both cases, $|\gamma_1|+|\gamma_2| \geq 16$. 
    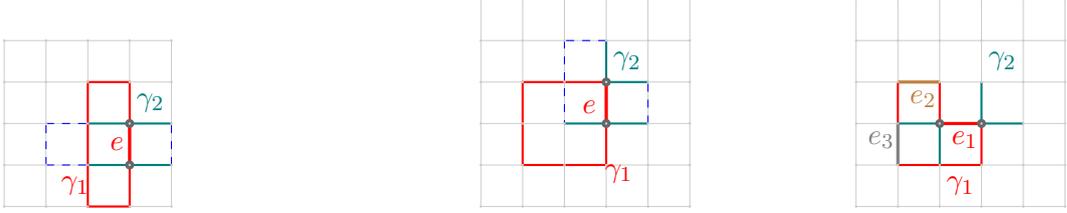
\begin{figure}[H]
    \centering
    \begin{subfigure}[b]{0.32\textwidth}
\begin{tikzpicture}[scale=0.55, every node/.style={font=\small}]
  \draw[step=1cm, very thin, gray!40] (0,0) grid (4,4);

\draw[thick, red] (2,3) rectangle (3,0);
\draw[very thick, red] (3,1) -- (3,2);
\node[red] at (2.7,1.5) {$e$};
\node[red] at (1.7,0.5) {$\gamma_1$};
\draw[thick,teal] (4,1) -- (3,1);
\draw[thick,teal] (3,1) -- (2,1);
\draw[thick,teal] (3,2) -- (2,2);
\draw[thick,teal] (3,2) -- (4,2);
\draw[dashed,blue] (4,1) -- (4,2);
\draw[dashed,blue] (1,2) -- (2,2);
\draw[dashed,blue] (1,1) -- (2,1);
\draw[dashed,blue] (1,1) -- (1,2);
\node[teal] at (3.5,2.5) {$\gamma_2$};
\draw (3,2) node[fill=black!60,circle,inner sep=1.2pt] {};
\draw (3,1) node[fill=black!60,circle,inner sep=1.2pt] {};

  \foreach \x in {0,...,4}
    \foreach \y in {0,...,4}
       \fill[gray!40] (\x,\y) circle (0.03);

\end{tikzpicture}

    \caption{First configuration type such that $|\gamma_1|=8$.}
    \label{fig:PfFig2a}
    \end{subfigure}
\hfill
\begin{subfigure}[b]{0.32\textwidth}
    \centering

\begin{tikzpicture}[scale=0.55, every node/.style={font=\small}]
  \draw[step=1cm, very thin, gray!40] (0,0) grid (5,5);

\draw[thick, red] (1,1) rectangle (3,3);
\draw[very thick, red] (3,3) -- (3,2);
\node[red] at (2.6,2.4) {$e$};
\node[red] at (3.3,0.8) {$\gamma_1$};
\draw[thick,teal] (4,2) -- (3,2);
\draw[thick,teal] (3,2) -- (2,2);
\draw[thick,teal] (4,3) -- (3,3);
\draw[thick,teal] (3,3) -- (3,4);
\draw[dashed,blue] (4,2) -- (4,3);
\draw[dashed,blue] (3,4) -- (2,4);
\draw[dashed,blue] (2,4) -- (2,2);
\node[teal] at (3.5,3.5) {$\gamma_2$};
\draw (3,3) node[fill=black!60,circle,inner sep=1.2pt] {};
\draw (3,2) node[fill=black!60,circle,inner sep=1.2pt] {};

  \foreach \x in {0,...,5}
    \foreach \y in {0,...,5}
      \fill[gray!40] (\x,\y) circle (0.03);

\end{tikzpicture}

    \caption{Second type of configuration such that $|\gamma_1|=8$.}
    \label{fig:PfFig2b}
\end{subfigure}
\begin{subfigure}[b]{0.32\textwidth}
    \centering
\begin{tikzpicture}[scale=0.55, every node/.style={font=\small}]
  \draw[step=1cm, very thin, gray!40] (0,0) grid (5,5);

\node[red] at (2.6,1.6) {$e_1$};
\node[red] at (2.5,0.5) {$\gamma_1$};
\node[brown] at (1.6,2.6) {$e_2$};
\node[gray] at (0.6,1.6) {$e_3$};
\draw[thick,red] (3,1) -- (2,1);
\draw[thick,red] (2,1) -- (1,1);
\draw[very thick,red] (3,2) -- (2,2);
\draw[thick,red] (2,2) -- (2,3);
\draw[thick,red] (3,1) -- (3,2);
\draw[very thick,brown] (2,3) -- (1,3);
\draw[very thick,gray] (1,2) -- (1,1);
\draw[thick,red] (1,3) -- (1,2);
\draw[thick, teal] (3,2) -- (3,3);
\draw[thick, teal] (3,2) -- (4,2) ;
\draw[thick, teal] (2,2) -- (2,1) ;
\draw[thick, teal] (2,2) -- (1,2);
\node[teal] at (3.5,3.5) {$\gamma_2$};
\draw (2,2) node[fill=black!60,circle,inner sep=1.2pt] {};
\draw (3,2) node[fill=black!60,circle,inner sep=1.2pt] {};

  \foreach \x in {0,...,5}
    \foreach \y in {0,...,5}
       \fill[gray!40] (\x,\y) circle (0.03);

\end{tikzpicture}

    \caption{Third type of configuration such that $|\gamma_1|=8$}
    \label{fig:PfFig2c}
\end{subfigure}
\caption{Types of configurations in Case 1 when $|\gamma_1| =8$. Here one sees the edge $e \in \gamma_1$ that is traversed twice by $\gamma_2$. The teal edges of $\gamma_2$ are the necessary edges in $\gamma_2$ to cross the boundaries of $e$, while the dashed blue edges correspond to a shortest path to close $\gamma_2$. In particular one has $|\gamma_1|+|\gamma_2| \geq 16$ in the three cases.}
\end{figure}

    Now, in the case $|\gamma_1| = 8$, the loop $\gamma_1$ can have three shapes. In the first case, pictured in Figure \ref{fig:PfFig2a}, one must have $|\gamma_2| \geq 8$ with a similar argumentation as in Figure \ref{fig:PfFig1c}. In the case where $\gamma_1$ is a square of 8 edges, as depicted in Figure \ref{fig:PfFig2b}, then $\gamma_2$ needs to contain the four teal edges marked in Figure \ref{fig:PfFig2b}, and one deduces that $|\gamma_2| \geq 8$ in order to connect the teal edges into a loop. The last possible shape of $\gamma_1$ is pictured in Figure \ref{fig:PfFig2c}. Here, if the edge of $\gamma_1$ that is touched twice is $e_2$ (marked in brown), then one has the same case as in Figure \ref{fig:PfFig1a}, where $|\gamma_2| \geq 12$. If the edge that is touched twice is $e_3$ (marked in gray), then one has the same case as in Figure \ref{fig:PfFig2b} in which $|\gamma_2| \geq 8$. Lastly, if the traversed edge is $e_1$, then the necessary edges that $\gamma_2$ contains are drawn in teal. Here, the smallest closed path that connects those edges contains at least $16$ edges. In particular, these are all possibilities in which $|\gamma_1|=8$. In this case as well we obtained $|\gamma_1|+|\gamma_2| \geq 16$. 
    We proved Lemma \ref{lem:abouttheloopdec} for Case 1, that is, if $\gamma_2$ touches twice an edge $e \in \gamma_1$, then in two dimensions $|\gamma_1|+|\gamma_2| \geq 16$.
    \newline 
\textit{Case 2.} The loop $\gamma_2$ touches at most one boundary of any edge in $e \in \gamma_1$. \newline 
One deduces that in this case, the number of edges in $\gamma_1$ whose boundary is touched by $\gamma_2$ needs to be even. Namely, for a fixed edge $e \in \gamma_1$, then $\gamma_2$ will touch at most one boundary of $e$. If it does, at that boundary $\gamma_2$ will touch the boundary of another edge $\tilde{e} \in \gamma_1$. Hence, the number of edges in $\gamma_1$ whose boundary is touched by $\gamma_2$ needs to be even. Moreover, note that $\gamma_2$ needs to touch the boundary of strictly more than two edges. If it touches the boundary of exactly two edges, as $\gamma_2$ is connected, it simply touches a corner as in Figure \ref{fig:PfFig3a}. The edges in $\gamma_1$ whose boundary is touched by $\gamma_2$ are marked black. In that case, the loop $\gamma=\gamma_1+\gamma_2$ has only one decomposition and doesn't satisfy our criterion of two distinct decompositions. It implies that the loop $\gamma_2$ needs to touch the boundary of at least 4 distinct edges in $\gamma_1$. Hence, we have the following two subcases to observe in order to conclude the proof.
\begin{figure}[H]
    \centering
    \begin{subfigure}[b]{0.45\textwidth}
\begin{tikzpicture}[scale=0.7, every node/.style={font=\small}]
  \draw[step=1cm, very thin, gray!40] (0,0) grid (5,5);

\draw[thick, red] (1,1) rectangle (3,3);
\node[red] at (2,2) {$\gamma_1$};
\draw[thick, blue] (3,3) rectangle (4,4);
\node[blue] at (3.5,3.5) {$\gamma_2$};
\draw[thick,black] (3,3) -- (3,2);
\draw[thick,black] (3,3) -- (2,3);
\draw (3,3) node[fill=black!60,circle,inner sep=1.2pt] {};

  \foreach \x in {0,...,5}
    \foreach \y in {0,...,5}
       \fill[gray!40] (\x,\y) circle (0.03);

\end{tikzpicture}
    \caption{Two loops share exactly one point. The edges in $\gamma_1$ that share a boundary with $\gamma_2$ are marked black.}
    \label{fig:PfFig3a}
    \end{subfigure}
\hfill
\begin{subfigure}[b]{0.45\textwidth}
    \centering

\begin{tikzpicture}[scale=0.7, every node/.style={font=\small}]
  \draw[step=1cm, very thin, gray!40] (0,0) grid (5,5);

\draw[thick, red] (2,2) rectangle (3,3);
\draw[thick, teal] (2,3) -- (1,3);
\draw[thick, teal] (1,3) -- (1,2);
\draw[thick, teal] (1,2) -- (1,1);
\draw[thick, teal] (1,1) -- (3,1);
\draw[thick, teal] (3,1) -- (3,2);
\draw[thick, teal] (3,2) -- (4,2);
\draw[thick, teal] (4,2) -- (4,4);
\draw[thick, teal] (4,4) -- (2,4);
\draw[thick, teal] (2,4) -- (2,3);
\node[red] at (2.5,2.5) {$\gamma_1$};
\node[teal] at (4.5,3.5) {$\gamma_2$};
\draw (2,3) node[fill=black!60,circle,inner sep=1.2pt] {};
\draw (3,2) node[fill=black!60,circle,inner sep=1.2pt] {};

  \foreach \x in {0,...,5}
    \foreach \y in {0,...,5}
      \fill[gray!40] (\x,\y) circle (0.03);

\end{tikzpicture}

    \caption{Configuration where $\gamma_2$ touches two corners of $\gamma_1$ that has minimal size.}
    \label{fig:PfFig3b}
\end{subfigure}
\caption{Figure (a) does not satisfy the criterion of two distinct decompositions in closed loops. Figure (b) shows the minimal configuration in which one loop touches two corner edges of $\gamma_1$.}
\end{figure}
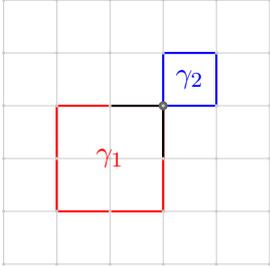
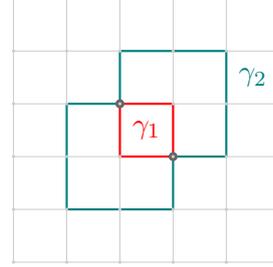

\textit{Case 2a)} The loop $\gamma_2$ only touches corner edges of $\gamma_1$. \newline 
In that case, one loop, let's say $\gamma_2$, needs to touch a corner of $\gamma_1$, then attain another corner of $\gamma_1$, and come back to the first corner to be a closed loop. If $\gamma_1$ has minimal size $|\gamma_1|=4$, then the shortest possible way is $|\gamma_2|=12$ as one can observe in Figure \ref{fig:PfFig3b}. If one tries to make $|\gamma_1|$ larger, then the size of $|\gamma_2|$ cannot decrease as it needs to contain more edges to attain more distant corners of $\gamma_1$. If $\gamma_2$ is the smallest of the two loops, then the above argumentation works by switching the roles of $\gamma_1$ and $\gamma_2$.
It finishes this case.
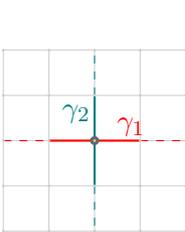
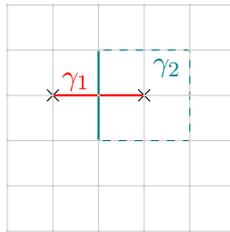
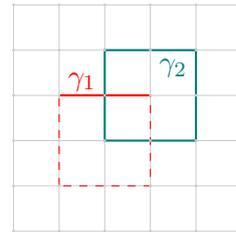
\begin{figure}[H]
    \centering
    \begin{subfigure}[b]{0.32\textwidth}
\begin{tikzpicture}[scale=0.6, every node/.style={font=\small}]
  \draw[step=1cm, very thin, gray!40] (0,0) grid (4,4);

\draw[thick, red] (1,2) -- (3,2);
\draw[dashed, red] (0,2) -- (1,2);
\draw[dashed, red] (3,2) -- (4,2);
\node[red] at (2.8,2.3) {$\gamma_1$};
\draw[thick, teal] (2,1) -- (2,3);
\draw[dashed, teal] (2,1) -- (2,0);
\draw[dashed, teal] (2,3) -- (2,4);
\node[teal] at (1.6,2.6) {$\gamma_2$};
\draw (2,2) node[fill=black!60,circle,inner sep=1.2pt] {};

  \foreach \x in {0,...,4}
    \foreach \y in {0,...,4}
       \fill[gray!40] (\x,\y) circle (0.03);

\end{tikzpicture}
    \caption{$\gamma_2$ crosses a pair of edges in $\gamma_1$.}
    \label{fig:PfFig4a}
    \end{subfigure}
\hfill
\begin{subfigure}[b]{0.32\textwidth}
    \centering

\begin{tikzpicture}[scale=0.6, every node/.style={font=\small}]
  \draw[step=1cm, very thin, gray!40] (0,0) grid (5,5);

\draw[thick, red] (1,3) -- (3,3);
\draw (1,3) node[cross out, draw, inner sep=2pt] {};
\draw (3,3) node[cross out, draw, inner sep=2pt] {};

\node[red] at (1.5,3.3) {$\gamma_1$};
\draw[thick, teal] (2,2) -- (2,4);
\draw[dashed, teal] (2,2) rectangle (4,4);
\node[teal] at (3.5,3.6) {$\gamma_2$};

  \foreach \x in {0,...,5}
    \foreach \y in {0,...,5}
      \fill[gray!40] (\x,\y) circle (0.03);

\end{tikzpicture}

    \caption{$\gamma_2$ crosses a pair of edges in $\gamma_1$, it implies $|\gamma_2| \geq 8$.}
    \label{fig:PfFig4b}
\end{subfigure}
    \begin{subfigure}[b]{0.32\textwidth}
\begin{tikzpicture}[scale=0.6, every node/.style={font=\small}]
  \draw[step=1cm, very thin, gray!40] (0,0) grid (5,5);

\draw[thick, red] (1,3) -- (3,3);
\draw[dashed, red] (1,1) rectangle (3,3);


\node[red] at (1.5,3.3) {$\gamma_1$};
\draw[thick, teal] (2,2) rectangle (4,4);
\node[teal] at (3.5,3.6) {$\gamma_2$};

  \foreach \x in {0,...,5}
    \foreach \y in {0,...,5}
      \fill[gray!40] (\x,\y) circle (0.03);

\end{tikzpicture}

    \caption{The smallest configuration $\gamma_1$ satisfies here $|\gamma_1|=8$.}
    \label{fig:PfFig4c}
    \end{subfigure}
\caption{Representation of the Case 2b) in the proof of Lemma \ref{lem:abouttheloopdec}.}
\end{figure}
\textit{Case 2b)} Assume that $\gamma_2$ doesn't only touches corners of $\gamma_1$, but touches every edge $e$ at at most one boundary. \newline 
Then $\gamma_2$ must cross at least one pair of edges of $\gamma_1$, as pictured in Figure \ref{fig:PfFig4a}. 
By the shape of the two edges of $\gamma_1$ which are crossed by $\gamma_2$, it must hold that $\gamma_1$ contains 6 edges or more, as one observes in Figure \ref{fig:PfFig4a}. On the other hand, it holds that $|\gamma_2| \geq 8$. To see this, observe that if one wants to close the loop $\gamma_2$ in Figure \ref{fig:PfFig4b} using a minimal number of edges, then $\gamma_2$ cannot pass through the crossed points in Figure \ref{fig:PfFig4b}, since it crosses at most one boundary of any edge $e \in \gamma_1$. Therefore, the shortest way to close $\gamma_2$ consists of 8 teal edges as shown in Figure \ref{fig:PfFig4b}. Finally, in that case $\gamma_1$ needs also to contain at least 8 edges to be closed as shown in Figure~\ref{fig:PfFig4c}. It finishes the case in which $|\gamma_2| = 8$. Now, note that if $|\gamma_2| \geq 10$, then one gets $|\gamma_1|+|\gamma_2| \geq 16$ as $|\gamma_1| \geq 6$ from Figure \ref{fig:PfFig4a}. This concludes the proof.
\end{proof}
\newpage
\bibliography{References}
\end{document}